\numberwithin{equation}{section}
\newtheorem{theorem}{Theorem}[section]
\newtheorem{lemma}[theorem]{Lemma}
\newtheorem{corollary}[theorem]{Corollary}
\newtheorem{proposition}[theorem]{Proposition}
\newtheorem{remark}[theorem]{Remark}
\begin{document}
\title[A Christ-Kiselev theorem in function lattices]{A Christ-Kiselev maximal theorem in quasi-Banach function lattices}

\author{Mieczys{\l}aw Masty{\l}o}
\address{Faculty of Mathematics and Computer Science,
Adam Mickiewicz University, Pozna{\'n},
Uniwersytetu Pozna{\'n}skiego 4,
61-614 Pozna{\'n}, 
Poland}
\email{mieczyslaw.mastylo@amu.edu.pl}
\thanks{The first author was supported by the National Science Centre, Poland, Project
2019/33/B/ST1/00165}

\author{Gord Sinnamon}
\address{Department of Mathematics, Western University, London, Canada}
\email{sinnamon@uwo.ca}
\thanks{The second author was supported by the Natural Sciences and Engineering Research Council of Canada}
\keywords{Maximal operators, filtrations, function spaces, Lorentz spaces, Weiner amalgam spaces, Fourier transform}
\subjclass[2020]{Primary 42B25; Secondary 46E30, 46B42}

\begin{abstract}
\noindent
A Christ-Kiselev maximal theorem is proved for linear operators between quasi-Banach function lattices satisfying certain lattice geometrical conditions. The result is further explored for weighted Lorentz spaces, classical Lorentz spaces, and Wiener amalgams of Lebesgue function and sequence spaces. Extensions are made to K\"othe dual operators and to operators on interpolation spaces of quasi-Banach function lattices. Several applications to maximal Fourier operators are presented. 
\end{abstract}
\maketitle


\section{Introduction} The maximal theorem given by Christ and Kiselev in \cite{ChK1} starts with a simple, flexible definition of a maximal operator based on a given linear operator and a given collection of sets. It shows that the maximal operator is bounded between two $L^p$ spaces whenever the original operator is. The ease and generality of the approach has simplified proofs and provided new insights in a wide variety of applications. 

\begin{theorem}[Christ-Kiselev Maximal Theorem]\label{CKorig} Let $(\Omega, \mu)$ and $(\widetilde\Omega, \nu)$ be measure spaces and let $\mathcal A$ be a countable, totally ordered collection of measurable subsets of $\Omega$. If $1\le p<q\le\infty$, and $T$ is a bounded, linear operator from $L^p_\mu$ to $L^q_\nu$ then the sublinear operator $T^*$, defined by
\[
T^{*}f(x) = \sup_{A\in\mathcal A} |T(f\chi_A)(x)|,
\]
is also bounded from  $L^p_\mu$ to $L^q_\nu$ and 
$
\|T^{*}\| \le \big(1- 2^{1/q-1/p}\big)^{-1}\|T\|.
$
\end{theorem}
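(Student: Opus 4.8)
The plan is to run the standard dyadic-bisection argument, organizing the estimate so that it collapses to the sum of a geometric series with ratio $2^{1/q-1/p}$. First I would normalize $\|f\|_{L^p_\mu}=1$ and dispose of $q=\infty$ at once: for each fixed $A$ one has $|T(f\chi_A)|\le\|T\|\,\|f\|_p$ $\nu$-a.e., and a countable supremum of functions satisfying this still satisfies it a.e., so $\|T^\ast f\|_\infty\le\|T\|$, which is the claim (with constant $1\le(1-2^{-1/p})^{-1}$). For $q<\infty$, since $\mathcal A$ is countable, $T^\ast f$ is the pointwise increasing limit of $\sup_{A\in\mathcal F}|T(f\chi_A)|$ over finite subfamilies $\mathcal F\subseteq\mathcal A$, each of which is itself a finite chain; so by monotone convergence it suffices to bound $\big\|\sup_{A\in\mathcal F}|T(f\chi_A)|\big\|_{L^q_\nu}$ by $(1-2^{1/q-1/p})^{-1}\|T\|$ uniformly over finite chains $\mathcal F$.

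Fix such an $\mathcal F$. The next step is a routine reduction to a model in which the bisection is clean: replace $(\Omega,\mu)$ by $(\Omega\times[0,1],\mu\otimes m)$ ($m=$ Lebesgue measure), $f$ by $f\otimes1$, each $A$ by $A\times[0,1]$, and $T$ by the averaged extension $\bar T g=T\big(x\mapsto\int_0^1 g(x,y)\,dy\big)$. By Minkowski's integral inequality and Jensen's inequality (both valid because $p\ge1$) one has $\|\bar T\|\le\|T\|$, while $\bar T^\ast(f\otimes1)=T^\ast f$; hence it is enough to prove the bound in the case where the measure $|f|^p\,d\mu$ is non-atomic. In that case there is a measurable $\phi\colon\Omega\to[0,1]$ with $\phi_\ast(|f|^p\,d\mu)=m$ that simultaneously carries each member of the chain $\mathcal F$ onto an initial interval $[0,t_A)$, where $t_A=\int_A|f|^p\,d\mu$: because the sets are nested, one parametrizes the successive differences measure-preservingly, which non-atomicity permits.

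The core is the dyadic decomposition of the truncations. For $t\in[0,1)$ with binary expansion $t=\sum_{j\ge1}\epsilon_j(t)2^{-j}$ and truncations $s_j=\sum_{i\le j}\epsilon_i(t)2^{-i}$, one has $[0,t)=\bigsqcup_{j\,:\,\epsilon_j(t)=1}[s_{j-1},s_j)$, and each nonempty piece $[s_{j-1},s_{j-1}+2^{-j})$ is the left half of a dyadic interval of generation $j-1$; as $t$ varies it takes at most $2^{j-1}$ distinct values among the dyadic intervals of generation $j$. Setting $\widetilde I=\phi^{-1}(I)$ and $t=t_A$, this transports to $f\chi_A=\sum_{j}f\chi_{\widetilde{I_j(t_A)}}$, so that
\[
|T(f\chi_A)|\ \le\ \sum_{j\ge1}\ \max_{I}\big|T(f\chi_{\widetilde I})\big|\ =:\ \sum_{j\ge1}M_j ,
\]
where for each $j$ the maximum runs over the (at most $2^{j-1}$) left-half dyadic intervals $I$ of generation $j$. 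The decisive point is that $M_j$ does not depend on $A$, so taking the supremum over the chain gives $\sup_{A\in\mathcal F}|T(f\chi_A)|\le\sum_{j\ge1}M_j$.

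Finally one sums. Since $q<\infty$, $M_j^{\,q}\le\sum_I|T(f\chi_{\widetilde I})|^q$, and using boundedness of $T$ together with $\|f\chi_{\widetilde I}\|_{L^p_\mu}^{\,p}=m(I)=2^{-j}$,
\[
\|M_j\|_{L^q_\nu}^{\,q}\ \le\ \|T\|^{q}\sum_I m(I)^{q/p}\ \le\ \|T\|^{q}\,2^{j-1}\,2^{-jq/p},
\]
whence $\|M_j\|_{L^q_\nu}\le\|T\|\,2^{-1/q}\big(2^{1/q-1/p}\big)^{j}$. Since $p<q$, the ratio $r=2^{1/q-1/p}$ lies in $(0,1)$, so $\sum_{j\ge1}\|M_j\|_{L^q_\nu}\le\|T\|\,2^{-1/q}\, r/(1-r)=\|T\|\,2^{-1/p}/(1-r)\le\|T\|\,(1-2^{1/q-1/p})^{-1}$; undoing the normalization and letting $\mathcal F$ exhaust $\mathcal A$ finishes the proof. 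I expect the main obstacle to lie not in this closing estimate — which is short, and is exactly where the hypothesis $p<q$ is used, through the balance between the number $2^{j-1}$ of scale-$j$ granules and their size $2^{-jq/p}$ — but in setting up the bisection: producing one monotone parametrization $\phi$ compatible with the whole chain at once (which forces the passage to a non-atomic model) and verifying the left-half dyadic structure of the truncated intervals that keeps the per-scale count at $2^{j-1}$ rather than $2^{j}$.
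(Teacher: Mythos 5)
Your argument is correct, and it reproduces the dyadic (``Whitney'') decomposition of Christ and Kiselev's original paper, which is a genuinely different route from the one taken here. This paper does not re-prove Theorem~\ref{CKorig} from scratch --- it is cited from~\cite{ChK1} --- but it recovers the $L^p\to L^q$ case, with a strictly smaller constant, as Corollary~\ref{pq}, via Theorem~\ref{thmChK2} and the induction in Theorem~\ref{lemmaCK} (following Tao's notes). That induction splits an $n$-block chain at the first index where the cumulative domain-norm of $f$ exceeds a threshold $\beta$, applies the inductive hypothesis to the two resulting subchains, and closes the recursion via a two-term lower-$p$ estimate in $E$ and upper-$q$ estimate in $F$; optimizing $\beta$ gives the constant $2^{1/q}/(1-(2^{p/q}-1)^{1/p})$, which Remark~\ref{const} shows is smaller than $(1-2^{1/q-1/p})^{-1}$. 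Your route instead transports the filtration onto initial intervals of $[0,1]$ (after tensoring with $([0,1],m)$ to obtain a non-atomic model and a measure-preserving parametrization $\phi$) and majorizes every truncation $|T(f\chi_A)|$ by a single $A$-independent sum $\sum_j M_j$ of scale-$j$ maximal pieces, whose $L^q_\nu$ norms form a geometric series with ratio $2^{1/q-1/p}$. The induction requires no non-atomicity reduction, transfers verbatim to quasi-Banach lattices (the setting this paper actually needs), and yields the sharper constant; the dyadic route is more structural, makes the role of $p<q$ visible as a plain summability condition, and hands you per-scale maximal functions that are useful in their own right. One small remark on your own argument: the ``left-half'' refinement you flag as a potential obstacle --- capping the scale-$j$ count at $2^{j-1}$ rather than $2^j$ --- is not actually needed to reach $(1-2^{1/q-1/p})^{-1}$: even with the cruder count $2^j$ you get $\sum_{j\ge1}\|M_j\|_{L^q_\nu}\le\|T\|\,r/(1-r)$ with $r=2^{1/q-1/p}<1$, which already suffices; the refinement merely improves this to $\|T\|\,2^{-1/p}/(1-r)$.
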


Our objective is to widen the applicability of this result by replacing the $L^p$ spaces by quasi-Banach function lattices. The hypothesis $p<q$ above is essential, so in any extension of Theorem \ref{CKorig} this separation between the domain and range spaces must be preserved. We employ upper and lower $p$-estimates, weak forms of convexity and concavity in function lattices, to accomplish this. Some useful tools involving these estimates are presented in the next section.

The general extension is established next and then we turn our attention to special cases, specifically, to the class of Lorentz spaces $\Lambda_{r,w}$ and to a class of Weiner amalgam spaces. The former contains the classical Lorentz spaces $L_{p,r}$, for which an extension is already known, see \cite{CKL}. The latter is a natural choice for an extension, in fact, the lack of Christ-Kiselev theorem in Weiner amalgam spaces was pointed out in \cite[Remark 1.5]{KKS}.

Duality in function lattices, i.e., the K\"othe dual, produces nothing new in the scale of $L^p$ spaces, but can be a rich source of useful lattices in the larger context. We explore the connection between the maximal operator based on a given linear operator and the maximal operator based on its dual operator. Interpolation between function lattices is an even richer source of lattices and of operators on them. We show that the boundedness of a maximal operator between interpolation spaces can be guaranteed by conditions on the endpoint spaces. 


\subsection{Preliminaries}

We use standard notation from Banach space theory. A quasi-norm on a real or complex vector space $X$ satisfies the properties of a norm, but with the triangle inequality replaced by  $\|x+y\|\le \kappa(\|x\|+\|y\|)$ for all $x,y\in X$, for some $\kappa\ge1$. A complete, quasi-normed space is called a quasi-Banach space.

For background on Banach lattices and Banach function lattices we refer the reader to \cite{LT}. For a complete, $\sigma$-finite measure space $(\Omega, \mu)=(\Omega, \Sigma,\mu)$, $L^0_\mu $ denotes the space of all (equivalence classes of) real- or complex-valued $\Sigma$-measurable functions on $\Omega$ that are finite $\mu$-a.e. A quasi-Banach space $E\subset L^0_\mu $ is said to be a quasi-Banach function lattice over $\Omega$ if for all $f, g\in L^0_\mu $ with $|g| \le |f|$ $\mu$-a.e.\ and
$f\in E$, we have $g\in E$ and $\|g\|_E \le \|f\|_E$. 

If $E$ is a Banach function lattice, $E'$ denotes the K\"othe dual of $E$, defined to be the vector space of all $g\in L^0_\mu $ such that
\[
\|g\|_{E'}=\sup\Big\{\int_\Omega|fg|\,d\mu:f\in E,\|f\|_E\le1\Big\}<\infty.
\] 
Since $\mu$ is $\sigma$-finite, $\|g\|_{E'}$ is unchanged if the supremum is restricted to the integrable functions in $f\in E$.

The \emph{distribution function}, $\mu_f$, and \emph{nonincreasing rearrangement}, $f^*$, of an $f\in L^0_\mu $, are defined by $\mu_f(\tau)=\mu(\{x\in\Omega: |f(x)|> \tau\})$ for $\tau \ge 0$, and $f^{*}(t)=\inf \{\tau \ge 0: \mu_f(\tau)\le t\}$ for $t\ge 0$. A quasi-Banach function lattice $E$ over $\Omega$ is called \emph{rearrangement-invariant} if $\|f\|_E = \|g\|_E$ for all $f, g\in E$ that satisfy $\mu_f = \mu_g$.

The Fourier transform of a Lebesgue integrable function $f:\mathbb R^n\to\mathbb C$ is defined by 
$$
\mathcal Ff(x)=\hat f(x)=\int_{\mathbb R^n}e^{-2\pi ix\cdot t}f(t)\,dt,
$$
suitably modified to define $\hat\mu$ for a Borel probability measure $\mu$ on $\mathbb R^n$. Extensions of the Fourier transform for $f$ in various spaces are also denoted $\mathcal Ff$ and $\hat f$.

\section{Upper and lower $p$-estimates in function lattices}
Let $X$ be a quasi-Banach lattice and $0< p \le \infty$. For each $n \in \mathbb{N}$ the $p$-convexity constant for $n$-vectors, $M^{(p),n}(X)$, is the least $C>0$ such that for all $x_1, \dots, x_n \in X$,
\[
\Big\|\Big(\sum_{j=1}^n |x_j|^p\Big)^{1/p}\Big\|_X \le C\Big(\sum_{j=1}^n \|x_j\|_{X}^p\Big)^{1/p}.
\]
For each $n \in \mathbb{N}$ the $p$-concavity constant for $n$-vectors, $M_{(p),n}(X)$, is the least $C>0$ such that for all $x_1, \dots, x_n \in X$,
\[
\Big(\sum_{j=1}^n \|x_j\|_{X}^p\Big)^{1/p} \le C\Big\|\Big(\sum_{j=1}^n |x_j|^p\Big)^{1/p}\Big\|_X.
\]
We make the usual modification for $p=\infty$ in the above expressions.
The space $X$ is called \emph{$p$-convex} if $M^{(p)}(X) = \sup_{n\geq 1} M^{(p),n}(X)<\infty$. It is called \emph{$p$-concave} if $M_{(p)}(X) = \sup_{n\geq 1} M_{(p), n}(X) <\infty$. 
 
For each $n\in \mathbb N$, $u^{(p),n}(X)$ is the least $C$ such that for any disjoint $x_1, \dots, x_n\in X$,
\[
\Big\|\sum_{j=1}^n x_j\Big\|_X \le C \Big(\sum_{j=1}^n \|x_j\|_{X}^p\Big)^{1/p}
\]
and $\ell_{(p),n}(X)$ is the least $C$ such that for any disjoint $x_1, \dots, x_n\in X$,
\[
\Big(\sum_{j=1}^n \|x_j\|_{X}^p\Big)^{1/p} \le C \Big\|\sum_{j=1}^n x_j \Big\|_X.
\]
Once again, we modify these expressions when $p=\infty$.
We say $X$ satisfies an \emph{upper $p$-estimate} if $u^{(p)}(X) =\sup_{n\geq 1}u^{(p), n}(X)< \infty$, and satisfies a \emph{lower $p$-estimate} if $\ell_{(p)}(X) = \sup_{n\ge 1} \ell_{(p),n}(X) < \infty$.
It is easy to see that if $X$ is $p$-convex, then it satisfies an upper $p$-estimate and if $X$ is $p$-concave, then it satisfies a lower $p$-estimate. 

The next four results are based on standard facts about upper and lower $p$-estimates but have been adapted to suit our situation. Let $E$ be a quasi-Banach function lattice. We say $E$ has the {\it Fatou property} if the following condition holds for all measurable $f$: If $0\le f_j\in E$ for $j\in\mathbb N$ and $f_j$ increases to $f$ pointwise a.e., then $f\in E$ and $\|f_j\|_E \to \|f\|_E$. We say $E$ has the {\it weak Fatou property} if the condition holds for all $f\in E$. A well-known theorem due to Nakano (see \cite{KA}), shows that if $E$ has the weak Fatou property, then
\[
\|f\|_E = \sup_{\|g\|_{E'} \leq 1} \bigg|\int_{\Omega} f g\,d\mu\bigg|.
\]

It is proved in \cite[Proposition 1.f.5]{LT} that if $1<p<\infty$ and $1/p + 1/p' =1$, then a Banach lattice $X$ satisfies an upper $p$-estimate
if and only if $X^{*}$ satisfies a lower $p'$-estimate and $X$ satisfies a lower $p$-estimate
if and only if $X^{*}$ satisfies an upper $p'$-estimate. We will need the following quantitative version for a Banach function lattice and its K\"othe dual space.
\begin{proposition} \label{ulndual}
Let $E$ be a Banach function lattice over $(\Omega,\mu)$ with the weak Fatou property. If $1\le p\le\infty$ and $n\in \mathbb{N}$, then $\ell_{(p), n}(E) = u^{(p'),n}(E')$ and $u^{(p), n}(E) = \ell_{(p'),n}(E')$. Also $\ell_{(p)}(E) = u^{(p')}(E')$ and $u^{(p)}(E) = \ell_{(p')}(E')$, whether finite or infinite.
\end{proposition}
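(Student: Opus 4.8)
The plan is to derive all four scalar equalities from two ingredients: the finite‑dimensional duality $\big(\sum_{j=1}^n c_j^p\big)^{1/p}=\sup\big\{\sum_{j=1}^n a_jc_j:a_j\ge0,\ \sum_{j=1}^n a_j^{p'}\le1\big\}$ for scalars $c_j\ge0$ (with the usual conventions at $p\in\{1,\infty\}$), and the two norm representations $\|g\|_{E'}=\sup\{|\int_\Omega fg\,d\mu|:\|f\|_E\le1\}$, which is the definition of the K\"othe dual, and $\|f\|_E=\sup\{|\int_\Omega fg\,d\mu|:\|g\|_{E'}\le1\}$, which is Nakano's theorem and is exactly where the weak Fatou property is used (it is what lets us dualize against $E'$ rather than against the full dual space). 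Whenever $x_1,\dots,x_n$ are disjoint I write $S_j$ for the support of $x_j$; these sets are pairwise disjoint up to null sets, so a function supported in $S_j$ is orthogonal in the pairing $\int\cdot\,d\mu$ to one supported in $S_k$ for $j\ne k$, and truncating any function to $\bigcup_j S_j$ splits it into disjoint pieces supported in the $S_j$ without increasing its lattice norm.

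First I would establish the two inequalities of type ``$u\le\ell$''. Take disjoint $y_1,\dots,y_n\in E'$ with supports $S_j$; by the definition of $\|\cdot\|_{E'}$ pick $f\in E$ with $\|f\|_E\le1$ nearly attaining $\|\sum_j y_j\|_{E'}$, replace $f$ by $f\chi_{\bigcup_j S_j}$, and write $f=\sum_j f_j$ with $f_j=f\chi_{S_j}$ disjoint. Since the cross terms vanish, $\big|\int_\Omega f\sum_j y_j\,d\mu\big|\le\sum_j\|f_j\|_E\|y_j\|_{E'}\le\big(\sum_j\|f_j\|_E^p\big)^{1/p}\big(\sum_j\|y_j\|_{E'}^{p'}\big)^{1/p'}$ by H\"older, and $\big(\sum_j\|f_j\|_E^p\big)^{1/p}\le\ell_{(p),n}(E)\|\sum_j f_j\|_E\le\ell_{(p),n}(E)$. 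Letting the approximation improve gives $u^{(p'),n}(E')\le\ell_{(p),n}(E)$. Running the identical computation with disjoint $x_1,\dots,x_n\in E$ and with Nakano's representation of $\|\sum_j x_j\|_E$ in place of the definition of $\|\cdot\|_{E'}$ yields $u^{(p),n}(E)\le\ell_{(p'),n}(E')$; this is the step that invokes the weak Fatou hypothesis.

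Next I would establish the two inequalities of type ``$\ell\le u$'' using the $\ell^p$--$\ell^{p'}$ duality. For disjoint $x_1,\dots,x_n\in E$ with supports $S_j$, Nakano's theorem supplies $g_j\in E'$ supported in $S_j$ with $\|g_j\|_{E'}\le1$ and $\int_\Omega x_jg_j\,d\mu$ real and arbitrarily close to $\|x_j\|_E$ (truncate to $S_j$, then rotate by a unimodular scalar; neither step increases $\|g_j\|_{E'}$). For $a_j\ge0$ with $\sum_j a_j^{p'}\le1$ the disjoint sum $g=\sum_j a_jg_j$ satisfies $\|g\|_{E'}\le u^{(p'),n}(E')\big(\sum_j a_j^{p'}\|g_j\|_{E'}^{p'}\big)^{1/p'}\le u^{(p'),n}(E')$, and pairing against $x=\sum_j x_j$ kills the cross terms, so $\sum_j a_j\int_\Omega x_jg_j\,d\mu=\int_\Omega xg\,d\mu\le u^{(p'),n}(E')\|x\|_E$. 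Taking the supremum over admissible $(a_j)$ and then releasing the approximation gives $\big(\sum_j\|x_j\|_E^p\big)^{1/p}\le u^{(p'),n}(E')\|x\|_E$, i.e.\ $\ell_{(p),n}(E)\le u^{(p'),n}(E')$. Interchanging the roles of $E$ and $E'$ and of $p$ and $p'$, and using the definition of $\|\cdot\|_{E'}$ in place of Nakano's theorem, the same argument gives $\ell_{(p'),n}(E')\le u^{(p),n}(E)$. The four inequalities combine to the finite‑$n$ identities, and $\ell_{(p)}(E)=u^{(p')}(E')$ and $u^{(p)}(E)=\ell_{(p')}(E')$ follow immediately by taking suprema over $n$, finite or infinite.

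I do not expect a genuine obstacle here; the work is essentially transcribing the classical duality for upper and lower $p$‑estimates into the quantitative, disjointly‑supported setting, and the only care required is with three routine points: restricting each test function to $\bigcup_j S_j$ so that the cross terms actually vanish and lattice norms do not grow, performing the $\varepsilon$‑approximations in Nakano's theorem and in the K\"othe dual norm \emph{before} passing to the $\ell^p$‑supremum and releasing them afterwards, and applying the usual conventions for $p=1$ and $p=\infty$ throughout. One should also note that each ``$u\le\ell$'' inequality is only needed when its right‑hand side is finite, which legitimizes the use of the $\ell_{(p),n}$ (respectively $\ell_{(p'),n}$) estimate in that step.
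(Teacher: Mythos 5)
Your proof is correct and follows essentially the same route as the paper's: bound $u^{(p'),n}(E')$ by $\ell_{(p),n}(E)$ via the K\"othe dual norm definition, truncation to the union of supports, and H\"older; and obtain the reverse inequality by pairing disjoint elements of $E$ against disjointly supported test functions in $E'$, invoking Nakano's theorem and the scalar $\ell^p$--$\ell^{p'}$ duality. The paper proves both inequalities for one equality and then says the other is ``proved similarly,'' whereas you group the four inequalities by direction and interleave the $\varepsilon$-approximation before the $\ell^p$-supremum rather than after, but these are cosmetic reorganizations of the same argument.
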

\begin{proof} Let $g_1, \dots, g_n \in E'$ have disjoint supports $A_1,\dots,A_n$, respectively. If $f\in E$, with $\|f\|_E\le1$, then $\|f\chi_{A_1}\|_E^p+\dots+\|f\chi_{A_n}\|_E^p\le \ell_{(p), n}(E)^p$ if $p<\infty$ and $\max\{\|f\chi_{A_1}\|_E,\dots,\|f\chi_{A_n}\|_E\}\le \ell_{(p), n}(E)$ if $p=\infty$. Therefore,
\[
\int_\Omega\Big|f\sum_{j=1}^ng_j\Big|\,d\mu
=\sum_{j=1}^n \int_{\Omega} |f\chi_{A_j} g_j|\,d\mu
\le \sum_{j=1}^n \|f\chi_{A_j}\|_E \|g_j\|_{E'}.
\]
H\"older's inequality shows that the last expression is no greater than 
\[
\ell_{(p), n}(E)\Big(\sum_{j=1}^n \|g_j\|_{E'}^{p'}\Big)^{1/p'},
\]
with the usual modification when $p'=\infty$. Take the supremum over all such $f$ to see that $u^{(p'), n}(E') \le \ell_{(p), n}(E)$.

To establish the reverse inequality, suppose $f_1, \dots, f_n\in E$ have disjoint supports $A_1,\dots,A_n$, respectively. Fix nonnegative $a_1,\dots, a_n$ that satisfy $a_1^{p'}+\dots+a_n^{p'}\le1$ when $p'<\infty$ and $\max\{a_1,\dots,a_n\}\le1$ when $p'=\infty$. Let $g_1,\dots,g_n\in E'$ with $g_j$ supported on $A_j$ and $\|g_j\|_{E'}\le1$ for each $j$. Then
\[
\Big\|\sum_{j=1}^na_jg_j\Big\|_{E'}
\le u^{(p'), n}(E')\Big(\sum_{j=1}^na_j^{p'}\|g_j\|^{p'}_{E'}\Big)^{1/p'}\le u^{(p'), n}(E').
\]
Therefore,
\[
\sum_{j=1}^na_j\Big|\int_{A_j}f_jg_j\,d\mu\Big|
\le\int_{\Omega}\Big|\sum_{j=1}^nf_j\Big|\Big|\sum_{j=1}^na_jg_j\Big|\,d\mu
\le u^{(p'), n}(E')\Big\|\sum_{j=1}^n f_j\Big\|_E.
\]
Take the supremum over all such $g_j$ for each $j$ and apply Nakano's Theorem to get 
\[
\sum_{j=1}^na_j\|f_j\|_E \le u^{(p'), n}(E')\Big\|\sum_{j=1}^n f_j\Big\|_E.
\]
Then take the supremum over all such $a_1,\dots, a_n$, to see that $\ell_{(p), n}(E)\le u^{(p'), n}(E')$.

The equation $u^{(p), n}(E) = \ell_{(p'),n}(E')$ is proved similarly. The last two equations in the proposition follow from the first two by taking the supremum over $n$.
\end{proof}

The next lemma is closely related to \cite[Theorem 1.f.11]{LT}.
\begin{lemma} \label{ellu1E}
Let $0<p\le\infty$ and let $E$ be a quasi-Banach function lattice satisfying a lower $p$-estimate. Then there is a lattice quasi-norm $\|\cdot\|_{E_{(p)}}$ on $E$ such that 
\begin{itemize}
\item[{\rm(i)}] $\ell_{(p)}(E_{(p)})=1$, where $E_{(p)}$ denotes $E$ with the quasi-norm $\|\cdot\|_{E_{(p)}}${\rm;} 
\item[{\rm(ii)}] for all $f\in E$, $\|f\|_E\le\|f\|_{E_{(p)}}\le\ell_{(p)}(E)\|f\|_E${\rm;}  and
\item[{\rm(iii)}]  if $E$ is a normed space and $p\ge1$, then $\|\cdot\|_{E_{(p)}}$ is a norm.
\end{itemize}

\end{lemma}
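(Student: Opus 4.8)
The plan is to build the new quasi-norm directly from the lower $p$-estimate by taking a supremum over all ways of splitting $f$ into disjointly supported pieces. Concretely, for $f\in E$ define
\[
\|f\|_{E_{(p)}} = \sup\Big(\sum_{j=1}^n \|f\chi_{A_j}\|_E^p\Big)^{1/p},
\]
where the supremum runs over all finite collections $A_1,\dots,A_n$ of pairwise disjoint measurable subsets of $\Omega$ (with the usual sup-replaces-sum modification when $p=\infty$), and $n$ ranges over $\mathbb N$. Taking the trivial partition $\{A_1\}=\{\Omega\}$ gives $\|f\|_E\le\|f\|_{E_{(p)}}$, and the lower $p$-estimate gives $\|f\|_{E_{(p)}}\le\ell_{(p)}(E)\|f\|_E$, which is part (ii); in particular the supremum is finite and $\|\cdot\|_{E_{(p)}}$ is well defined on all of $E$ with the same underlying vector space.

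Next I would check the lattice quasi-norm axioms. Homogeneity is immediate. The lattice property — that $|g|\le|f|$ a.e.\ implies $\|g\|_{E_{(p)}}\le\|f\|_{E_{(p)}}$ — follows termwise from the lattice property of $\|\cdot\|_E$ applied to each $g\chi_{A_j}$ and $f\chi_{A_j}$. For the quasi-triangle inequality, given $f,h\in E$ and a disjoint family $\{A_j\}$, I would bound $\big(\sum_j\|(f+h)\chi_{A_j}\|_E^p\big)^{1/p}$ using the quasi-triangle inequality of $\|\cdot\|_E$ inside each term and then the quasi-triangle inequality of the finite-dimensional $\ell^p$ quasi-norm; taking the supremum over $\{A_j\}$ and recognizing the two resulting suprema as $\|f\|_{E_{(p)}}$ and $\|h\|_{E_{(p)}}$ yields $\|f+h\|_{E_{(p)}}\le \kappa_p(\|f\|_{E_{(p)}}+\|h\|_{E_{(p)}})$ for a constant $\kappa_p$ depending on the modulus of concavity of $E$ and on $p$. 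For (iii), when $E$ is a norm and $p\ge1$, both steps above use genuine triangle inequalities (the triangle inequality in $E$ and Minkowski's inequality in $\ell^p$), so $\|\cdot\|_{E_{(p)}}$ is a norm; completeness is inherited from (ii), which shows $\|\cdot\|_{E_{(p)}}$ is equivalent to $\|\cdot\|_E$.

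Finally, for (i): if $x_1,\dots,x_m\in E$ are disjointly supported, say on sets $B_1,\dots,B_m$, then for any disjoint family $\{A_j\}_{j=1}^n$ the sets $\{B_i\cap A_j\}_{i,j}$ are again pairwise disjoint, so testing the definition of $\|\cdot\|_{E_{(p)}}$ on $\sum_i x_i$ against this refined family and using $(\sum_i x_i)\chi_{B_i\cap A_j}=x_i\chi_{A_j}$ gives, after rearranging the double sum,
\[
\sum_{i=1}^m\Big(\sum_{j=1}^n\|x_i\chi_{A_j}\|_E^p\Big)
\le \Big\|\sum_{i=1}^m x_i\Big\|_{E_{(p)}}^p.
\]
Taking the supremum over $\{A_j\}$ in each inner sum and using that the outer sum is finite shows $\sum_i\|x_i\|_{E_{(p)}}^p\le\|\sum_i x_i\|_{E_{(p)}}^p$, i.e.\ $\ell_{(p)}(E_{(p)})\le1$; the reverse inequality $\ell_{(p)}(E_{(p)})\ge1$ is trivial (take $m=1$). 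The main technical obstacle I anticipate is pinning down the quasi-triangle constant $\kappa_p$ correctly — one must be careful that nesting the two quasi-triangle inequalities (in $E$ and in $\ell^p$ for $p<1$) does not inflate the constant in a way that breaks the claim; checking that the construction genuinely preserves the quasi-Banach structure, and that in the normed case $p\ge1$ no constant appears, is the delicate point, but it is routine once the definition above is in place.
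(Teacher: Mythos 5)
Your construction is the same as the paper's: you define $\|f\|_{E_{(p)}}$ as the supremum over disjoint splittings of $f$ (your sup over disjoint families $\{A_j\}$ is equivalent to the paper's sup over disjointly supported $f_1,\dots,f_n$ summing to $f$, since the sum can only increase when one adds $\Omega\setminus\bigcup A_j$ to the family), and you verify (ii), the lattice quasi-norm axioms, (iii), and (i) in essentially the same way.

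One step deserves tightening. In (i), you start from a single disjoint family $\{A_j\}$, pass to the refinement $\{B_i\cap A_j\}$, and obtain
\[
\sum_{i=1}^m\sum_{j=1}^n\|x_i\chi_{A_j}\|_E^p\le\Big\|\sum_{i=1}^m x_i\Big\|_{E_{(p)}}^p,
\]
then assert that ``taking the supremum over $\{A_j\}$ in each inner sum'' yields $\sum_i\|x_i\|_{E_{(p)}}^p$ on the left. As written this swaps a supremum over a \emph{common} family with suprema taken \emph{independently} for each $i$, which is not automatic. The fix is exactly the disjointness you already have in hand: pick, for each $i$, an $i$-dependent near-optimal family $\{A_j^{(i)}\}$ with $\sum_j\|x_i\chi_{A_j^{(i)}}\|_E^p>\|x_i\|_{E_{(p)}}^p-\varepsilon$, and then form the single disjoint family $\{B_i\cap A_j^{(i)}\}_{i,j}$; since $x_k$ is supported in $B_k$, this one family simultaneously near-optimizes every inner sum. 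That is precisely what the paper does with its $\theta$-approximation and merged decomposition, so the two proofs coincide once this step is spelled out.
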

\begin{proof} We prove only the case $p<\infty$. The same proof, with appropriate modifications to some of the expressions, works when $p=\infty$.  Choose $\kappa$ so that $\|f+g\|_E\le \kappa(\|f\|_E+\|g\|_E)$ for all $f, g\in E$, taking $\kappa=1$ if $E$ is a normed space. For each $f\in E$, set
\[
\|f\|_{E_{(p)}}=\sup\Big(\sum_{j=1}^n\|f_j\|_E^p\Big)^{1/p},
\]
where the supremum is taken over all finite collections $f_1,\dots,f_n$ of disjointly supported functions that sum to $f$. Taking $n=1$ and $f_1=f$ shows that $\|f\|_E\le\|f\|_{E_{(p)}}$ and the definition of $\ell_{(p)}(E)$ shows $\|f\|_{E_{(p)}}\le\ell_{(p)}(E)\|f\|_E$. 

The first of these two inequalities ensures that for each $f\in E$, $\|f\|_{E_{(p)}}\ge0$ with equality only if $f=0$ $\mu$-a.e. For $\alpha\in\mathbb R$, it is routine to verify that $\|\alpha f\|_{E_{(p)}}=|\alpha|\|f\|_{E_{(p)}}$.  To prove that $\|\cdot\|_{E_{(p)}}$ has the lattice property, suppose $h$ is a measurable function, $f\in E$ and $|h|\le|f|$. Since $E$ is a lattice, $h\in E$. Take $h_1,\dots,h_n$ to be disjointly supported functions in $E$ that sum to $h$ and take $H_1,\dots,H_n$ to be a measurable partition of the underlying measure space such that $h_j$ is supported on $H_j$ for all $j$. Then $f\chi_{H_1}+\dots+f\chi_{H_n}=f$ and $|h_j|\le |f\chi_{H_j}|$ for all $j$. We have,
\[
\Big(\sum_{j=1}^n\|h_j\|_E^p\Big)^{1/p}\le\Big(\sum_{j=1}^n\|f_j\chi_{H_j}\|_E^p\Big)^{1/p}\le\|f\|_{E_{(p)}}.
\]
Taking the supremum over all such $h_1,\dots,h_n$ shows that $\|h\|_{E_{(p)}}\le\|f\|_{E_{(p)}}$.

For the triangle inequality, let $f,g\in E$ and suppose $h_1,\dots,h_n$ are disjointly supported and sum to $f+g$. Take $H_1,\dots,H_n$ as above, and observe that $h_j=f\chi_{H_j}+g\chi_{H_j}$ for each $j$, $f\chi_{H_1}+\dots+f\chi_{H_n}=f$ and $g\chi_{H_1}+\dots+g\chi_{H_n}=g$. Then
\begin{align*}
\Big(\sum_{j=1}^n&\|h_j\|_E^p\Big)^{1/p}=\Big(\sum_{j=1}^n\|f\chi_{H_j}+g\chi_{H_j}\|_E^p\Big)^{1/p}\\
&\le \kappa\Big(\sum_{j=1}^n(\|f\chi_{H_j}\|_E+\|g\chi_{H_j}\|_E)^p\Big)^{1/p}\\
&\le \kappa\max\{1,2^{1/p-1}\}\Big(\Big(\sum_{j=1}^n\|f\chi_{H_j}\|_E^p\Big)^{1/p}+\Big(\sum_{j=1}^n\|g\chi_{H_j}\|_E^p\Big)^{1/p}\Big)\\
&\le \kappa\max\{1,2^{1/p-1}\}(\|f\|_{E_{(p)}}^p+\|g\|_{E_{(p)}}^p).
\end{align*}
Taking the supremum over all such $h_1,\dots,h_n$ shows that
\[
\|f+g\|_{E_{(p)}}\le \kappa\max\{1,2^{1/p-1}\}(\|f\|_{E_{(p)}}+\|g\|_{E_{(p)}}).
\]
This proves that $\|\cdot\|_{E_{(p)}}$ is a quasi-norm, which is a norm if $E$ is a normed space and $p\ge1$.

It remains to show that $\ell_{(p)}(E_{(p)})=1$. Suppose $f_1,\dots,f_n$ are disjointly supported and sum to $f$ in $E$. Fix $\theta\in(0,1)$ and for each $j$, chose disjointly supported functions $f_{j,1},\dots, f_{j,n_j}$ that sum to $f_j$ and satisfy
\[
\theta\|f_j\|_{E_{(p)}}\le\Big(\sum_{k=1}^{n_j}\|f_{j,k}\|_E^p\Big)^{1/p}.
\]
For each $j$, the supports of $f_{j,1},\dots, f_{j,n_j}$ are contained in the support of $f_j$ so $\{f_{j,k}:1\le k\le n_j, 1\le j\le n\}$ is a collection of disjointly supported functions that sum to $f$. Therefore,
\[
\theta\Big(\sum_{j=1}^n\|f_j\|_{E_{(p)}}^p\Big)^{1/p}
\le\Big(\sum_{j=1}^n\sum_{k=1}^{n_j}\|f_{j,k}\|_E^p\Big)^{1/p}\le\|f\|_{E_{(p)}}.
\]
Let $\theta\to1$ to see that $\ell_{(p)}(E_{(p)})\le1$. But $\ell_{(p)}(E_{(p)})\ge1$ is trivial so this completes the proof.
\end{proof}
Combining the last two lemmas gives a result for upper $p$-estimates.
\begin{lemma}\label{comb} Let $1\le q\le\infty$ and let $F$ be a Banach function lattice with the Fatou property that satisfies an upper $q$-estimate. Then there is a lattice norm $\|\cdot\|_{F^{(q)}}$ on $F$ such that
\begin{itemize}
\item[{\rm(i)}] $u^{(q)}(F^{(q)})=1$, where $F^{(q)}$ denotes $F$ with the norm $\|\cdot\|_{F^{(q)}}${\rm;} and
\item[{\rm(ii)}] for all $g\in F$, $\|g\|_{F^{(q)}}\le\|g\|_F\le u^{(q)}(F)\|g\|_{F^{(q)}}$.
\end{itemize} 
\end{lemma}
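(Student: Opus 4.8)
The plan is to build $F^{(q)}$ by dualizing twice: use Proposition \ref{ulndual} to pass from the upper $q$-estimate on $F$ to a lower $q'$-estimate on the K\"othe dual $F'$, renormalize $F'$ via Lemma \ref{ellu1E}, and then take K\"othe duals once more, applying Proposition \ref{ulndual} again to recover an upper $q$-estimate with constant $1$.

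Concretely, I would first invoke Proposition \ref{ulndual} with $E=F$ and $p=q$: since $F$ has the Fatou property (hence the weak Fatou property) and satisfies an upper $q$-estimate, $F'$ is a Banach function lattice satisfying a lower $q'$-estimate with $\ell_{(q')}(F')=u^{(q)}(F)$. As $F'$ is a normed lattice and $q'\ge1$, Lemma \ref{ellu1E} applied to $E=F'$ with $p=q'$ produces a lattice \emph{norm} $\|\cdot\|_{(F')_{(q')}}$ on $F'$ with $\ell_{(q')}\big((F')_{(q')}\big)=1$ and
\[
\|h\|_{F'}\le\|h\|_{(F')_{(q')}}\le u^{(q)}(F)\,\|h\|_{F'},\qquad h\in F'.
\]
Hence $(F')_{(q')}$ is a Banach function lattice whose norm is equivalent to that of $F'$. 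I would then \emph{define} $\|\cdot\|_{F^{(q)}}$ to be the K\"othe dual norm of $(F')_{(q')}$, which is automatically a lattice norm.

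For (ii), note that the displayed equivalence says the closed unit ball of $(F')_{(q')}$ is contained in that of $F'$, which in turn is contained in $u^{(q)}(F)$ times the unit ball of $(F')_{(q')}$. Passing to K\"othe duals reverses inclusions of unit balls, so $\|g\|_{F^{(q)}}\le\|g\|_{F''}\le u^{(q)}(F)\,\|g\|_{F^{(q)}}$ for every $g$ in the common underlying function set; since $F$ has the Fatou property, $F''=F$ isometrically, and this is exactly (ii)---in particular $F^{(q)}$ is $F$ equipped with the new norm. For (i), I would apply Proposition \ref{ulndual} once more, now with $E=(F')_{(q')}$ and $p=q'$. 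Only the inequality ``$u^{(p'),n}(E')\le\ell_{(p),n}(E)$'' is needed here, and its proof within Proposition \ref{ulndual} uses only H\"older's inequality and the definition of $\ell_{(p),n}$, so no further Fatou-type hypothesis on $(F')_{(q')}$ is required. This yields, for each $n$,
\[
u^{(q),n}(F^{(q)})\le\ell_{(q'),n}\big((F')_{(q')}\big)\le\ell_{(q')}\big((F')_{(q')}\big)=1,
\]
and since $u^{(q),n}(F^{(q)})\ge u^{(q),1}(F^{(q)})=1$ trivially, $u^{(q),n}(F^{(q)})=1$ for all $n$; thus $u^{(q)}(F^{(q)})=1$.

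The only point that genuinely needs care is keeping the directions of the inequalities straight through the two K\"othe-dualizations (and recording that $F''=F$ isometrically, which is where the Fatou property of $F$ is used and what keeps the constant equal to $u^{(q)}(F)$). Once that is organized, the lemma is a direct combination of Proposition \ref{ulndual} and Lemma \ref{ellu1E}, as the surrounding text indicates.
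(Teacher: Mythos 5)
Your proof is correct and follows the same construction as the paper: set $p=q'$, $E=F'$, use Proposition \ref{ulndual} to transfer the upper $q$-estimate on $F$ to a lower $q'$-estimate on $F'$ with $\ell_{(q')}(F')=u^{(q)}(F)$, renorm $F'$ via Lemma \ref{ellu1E}, and take $F^{(q)}$ to be the K\"othe dual of the renormed $F'$ (with $F''=F$ isometrically by the Fatou property giving (ii)). The one place you are more careful than the paper is in establishing (i): the paper simply cites Proposition \ref{ulndual} a second time to get $u^{(q)}(F^{(q)})=\ell_{(q')}\big((F')_{(q')}\big)$, which formally requires $(F')_{(q')}$ to have the weak Fatou property --- true, but not verified in the paper; you instead observe that only the H\"older half of that proposition's proof is needed, giving $u^{(q),n}(F^{(q)})\le 1$ without any Fatou-type hypothesis, and pair it with the trivial bound $u^{(q),n}\ge 1$. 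This is a modest but genuine tidying of the argument.
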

\begin{proof} Let $p=q'$ and $E=F'$. Then $E$ is a normed function lattice and Proposition \ref{ulndual} shows that $E$ satisfies a lower $p$-estimate and $\ell_{(p)}(E)=u^{(q)}(F)$. Let $E_{(p)}$ be the space $E$, equipped with the norm $\|\cdot\|_{E_{(p)}}$ from Lemma \ref{ellu1E}. For each $f\in E$,
\[
\|f\|_{F'}\le\|f\|_{E_{(p)}}\le u^{(q)}(F)\|f\|_{F'}. 
\]
Let $F^{(q)}=(E_{(p)})'$ equipped with the dual norm.  Using Proposition \ref{ulndual} we get $u^{(q)}(F^{(q)})=\ell_{(p)}(E_{(p)})=1$. Since $F$ has the Fatou property, $F=E'$ so for each $g\in F$,
\[
\|g\|_{F^{(q)}}=\sup_{0\ne f\in E}\frac{\big|\int_\Omega fg\,d\mu\big|}{\|f\|_{E_{(p)}}}
\quad\text{and}\quad
\|g\|_F=\sup_{0\ne f\in E}\frac{\big|\int_\Omega fg\,d\mu\big|}{\|f\|_{F'}}.
\]
It follows that $\|g\|_{F^{(q)}}\le\|g\|_F\le u^{(q)}(F)\|g\|_{F^{(q)}}$.
\end{proof}
A different approach to the last result is needed when $q<1$ or $F$ is only assumed to be quasi-Banach. 
\begin{lemma} \label{ellu1F}
Let $0<q\le\infty$ and let $F$ be a quasi-Banach function lattice satisfying an upper $q$-estimate. Then there is a lattice quasi-norm $\|\cdot\|_{F^{(q)}}$ on $F$, such that

\begin{itemize}
\item[{\rm(i)}] $u^{(q)}(F^{(q)})=1$, where $F^{(q)}$ denotes $F$ with the quasi-norm $\|\cdot\|_{F^{(q)}}${\rm;} 
\item[{\rm(ii)}] the triangle inequality in $F^{(q)}$ holds with constant $\max\{2,2^{1/q}\}${\rm;} and
\item[{\rm(iii)}] for all $g\in F$, $\|g\|_{F^{(q)}}\le\|g\|_F\le u^{(q)}(F)\|g\|_{F^{(q)}}$.
\end{itemize}

\end{lemma}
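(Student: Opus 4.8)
The plan is to mimic the construction in Lemma~\ref{ellu1E}, but with an infimum in place of the supremum and with exponent $q$: for $f\in F$ set
\[
\|f\|_{F^{(q)}}=\inf\Big(\sum_{j=1}^n\|f_j\|_F^q\Big)^{1/q},
\]
the infimum taken over all finite collections $f_1,\dots,f_n$ of disjointly supported functions summing to $f$, with the usual replacement of the $\ell^q$-sum by a maximum when $q=\infty$. Taking $n=1$ and $f_1=f$ gives $\|f\|_{F^{(q)}}\le\|f\|_F$, while the upper $q$-estimate applied to an arbitrary disjoint decomposition $f=\sum_j f_j$ gives $\|f\|_F\le u^{(q)}(F)(\sum_j\|f_j\|_F^q)^{1/q}$; taking the infimum yields $\|f\|_F\le u^{(q)}(F)\|f\|_{F^{(q)}}$, which is (iii). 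In particular $\|\cdot\|_{F^{(q)}}$ is finite and positive definite on $F$ and is equivalent to $\|\cdot\|_F$, so once it is shown to be a lattice quasi-norm, completeness of $F$ transfers to $F^{(q)}$. Positive homogeneity is immediate from the definition.

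Next I would check the lattice property and item (i). If $h$ is measurable, $f\in F$, $|h|\le|f|$, and $f=\sum_j f_j$ is a disjoint decomposition with $f_j$ supported on $H_j$ (the $H_j$ pairwise disjoint), then $h=\sum_j h\chi_{H_j}$ is a disjoint decomposition of $h$ with $|h\chi_{H_j}|\le|f_j|$, so $\|h\chi_{H_j}\|_F\le\|f_j\|_F$; taking infima gives $\|h\|_{F^{(q)}}\le\|f\|_{F^{(q)}}$. Concatenating near-optimal decompositions of each $h_i$ into a single disjoint decomposition of $\sum_{i=1}^n h_i$ shows that for disjointly supported $h_1,\dots,h_n\in F$ one has $\|\sum_i h_i\|_{F^{(q)}}^q\le\sum_i\|h_i\|_{F^{(q)}}^q$ (maximum when $q=\infty$); hence $u^{(q)}(F^{(q)})\le1$, and the reverse inequality being trivial, (i) follows.

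For the triangle inequality (ii) I would split by magnitude rather than attempt to merge near-optimal decompositions of $f$ and $g$, which would drag the quasi-triangle constant of $F$ into the estimate. Put $A=\{|f|\ge|g|\}$ and $B=\Omega\setminus A$, so that $f+g=(f+g)\chi_A+(f+g)\chi_B$ is a disjoint decomposition with $|(f+g)\chi_A|\le2|f|$ and $|(f+g)\chi_B|\le2|g|$. The disjoint subadditivity from the previous step together with the lattice property gives
\[
\|f+g\|_{F^{(q)}}\le\Big((2\|f\|_{F^{(q)}})^q+(2\|g\|_{F^{(q)}})^q\Big)^{1/q}=2\big(\|f\|_{F^{(q)}}^q+\|g\|_{F^{(q)}}^q\big)^{1/q},
\]
and the elementary inequality $(a^q+b^q)^{1/q}\le\max\{1,2^{1/q-1}\}(a+b)$ converts this into $\|f+g\|_{F^{(q)}}\le\max\{2,2^{1/q}\}(\|f\|_{F^{(q)}}+\|g\|_{F^{(q)}})$, with the expected modifications when $q=\infty$. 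This final observation — that the coarse two-piece magnitude split, combined with the lattice property, produces a triangle constant depending only on $q$ — is the only genuinely delicate point; everything else is bookkeeping parallel to Lemma~\ref{ellu1E}.
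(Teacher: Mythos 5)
Your proposal is correct and follows essentially the same route as the paper's proof: the same infimum over disjoint decompositions, the same concatenation argument to get disjoint $q$-subadditivity, and the same magnitude split $\{|f|\ge|g|\}$ versus its complement (paired with the lattice property) to derive the triangle inequality with constant $\max\{2,2^{1/q}\}$. The only difference is cosmetic: you establish $u^{(q)}(F^{(q)})\le1$ first and then reuse that disjoint subadditivity in the triangle inequality, whereas the paper first proves a two-term disjoint estimate to get the triangle inequality and later redoes the concatenation argument for general $n$; the underlying calculations are identical.
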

\begin{proof} Again we prove only the case $q<\infty$. The same proof, with appropriate modifications to some of the expressions, works when $q=\infty$. 
For each $f\in F$, set
\[
\|f\|_{F^{(q)}}=\inf\Big(\sum_{j=1}^n\|f_j\|_F^q\Big)^{1/q},
\]
where the infimum is taken over all finite collections $f_1,\dots,f_n$ of disjointly supported functions that sum to $f$. Taking $n=1$ and $f_1=f$ shows that $\|f\|_{F^{(q)}}\le \|f\|_F$ and the definition of $u^{(q)}(F)$ shows that $\|f\|_F\le u^{(q)}(F)\|f\|_{F^{(q)}}$.  

Since $u^{(q)}(F)$ is assumed to be finite, we get $\|f\|_{F^{(q)}}\ge0$ with equality only if $f=0$ $\mu$-a.e.  For $\alpha\in\mathbb R$, it is routine to verify that $\|\alpha f\|_{F^{(q)}}=|\alpha|\|f\|_{F^{(q)}}$. To prove that $\|\cdot\|_{F^{(q)}}$ has the lattice property, suppose $f$ is a measurable function, $h\in F$ and $|f|\le|h|$. Since $F$ is a lattice, $f\in F$. Take $h_1,\dots,h_n$ to be disjointly supported functions in $F$ that sum to $h$ and take $H_1,\dots,H_n$ to be a measurable partition of the underlying measure space such that $h_j$ is supported on $H_j$ for all $j$. Then $f\chi_{H_1}+\dots+f\chi_{H_n}=f$ and $|f\chi_{H_j}|\le |h\chi_{H_j}|=|h_j|$ for all $j$. We have,
\[
\|f\|_{F^{(q)}}\le\Big(\sum_{j=1}^n\|f\chi_{H_j}\|_F^q\Big)^{1/q}\le\Big(\sum_{j=1}^n\|h_j\|_F^q\Big)^{1/q}.
\]
Taking the infimum over all such $h_1,\dots,h_n$ shows that $\|f\|_{F^{(q)}}\le\|h\|_{F^{(q)}}$.

We prove the triangle inequality in two steps. Suppose $f,g\in F$ have disjoint supports. Let $f_1,\dots, f_m$ be disjointly supported and sum to $f$ and let $g_1,\dots, g_n$ be disjointly supported and sum to $g$. Then $f_1,\dots,f_m,g_1,\dots,g_m$ are disjointly supported and sum to  $f+g$. Therefore, 
\[
\|f+g\|_{F^{(q)}}\le\Big(\sum_{j=1}^m\|f_j\|_F^q +\sum_{k=1}^n\|g_k\|_F^q\Big)^{1/q}.
\]
Taking the infimum over all such decompositions of $f$ and $g$, we get
\[
\|f+g\|_{F^{(q)}}\le\big(\|f\|_{F^{(q)}}^q+\|g\|_{F^{(q)}}^q\big)^{1/q}\le \max\{1,2^{1/q-1}\}\big(\|f\|_{F^{(q)}}+\|g\|_{F^{(q)}}\big).
\]

Now  we drop the disjoint support assumption on $f$ and $g$. Let $S_f$ be the set of points where $|f|\ge|g|$ and let $S_g$ be the set of points where $|g|>|f|$. Then $S_f$ and $S_g$ are disjoint, $|f+g|\chi_{S_f}\le2|f|$ and $|f+g|\chi_{S_g}\le 2|g|$, so
\begin{align*}
\|f+g\|_{F^{(q)}}&=\|(f+g)\chi_{S_f}+(f+g)\chi_{S_g}\|_{F^{(q)}}\\
&\le \max\{1,2^{1/q-1}\}\big(\|(f+g)\chi_{S_f}\|_{F^{(q)}}+\|(f+g)\chi_{S_g}\|_{F^{(q)}}\big)\\
&\le 2\max\{1,2^{1/q-1}\}\big(\|f\|_{F^{(q)}}+\|g\|_{F^{(q)}}\big).
\end{align*}
This proves that $\|\cdot\|_{F^{(q)}}$ is a quasi-norm and the triangle inequality in $F^{(q)}$ holds with constant $\max\{2,2^{1/q}\}$.

It remains to show that $u^{(q)}(F^{(q)})=1$. Suppose $f_1,\dots,f_n$ are disjointly supported and sum to $f$ in $F$. Fix $\theta>1$ and for each $j$, chose disjointly supported functions $f_{j,1},\dots, f_{j,n_j}$ that sum to $f_j$ and satisfy
\[
\theta\|f_j\|_{F^{(q)}}\ge\Big(\sum_{k=1}^{n_j}\|f_{j,k}\|_F^q\Big)^{1/q}.
\]
For each $j$, the supports of $f_{j,1},\dots, f_{j,n_j}$ are contained in the support of $f_j$ so $\{f_{j,k}:1\le k\le n_j; 1\le j\le n\}$ is a collection of disjointly supported functions that sum to $f$. Therefore,
\[
\|f\|_{F^{(q)}}\le\Big(\sum_{j=1}^n\sum_{k=1}^{n_j}\|f_{j,k}\|_F^q\Big)^{1/q}
\le\theta\Big(\sum_{j=1}^n\|f_j\|_{F^{(q)}}^q\Big)^{1/q}.
\]
Let $\theta\to1$ to see that $u^{(q)}(F^{(q)})\le 1$. But $u^{(q)}(F^{(q)})\ge1$ is trivial so this completes the proof.
\end{proof}

\section{A Christ-Kiselev theorem in quasi-Banach lattices}

In this section we give our main results, two closely related statements of the Christ-Kiselev maximal theorem for operators between quasi-Banach lattices. The first statement imposes a quantitative condition on two-term lower and upper estimates for the domain and range spaces and gives strong control of the constants involved. The second asks for the existence of upper and lower estimates, not just two-term estimates, but imposes no additional quantitative condition. Although the two theorems overlap, each may apply when the other does not. 

Throughout the section, $E$ and $F$ will be quasi-Banach function lattices over measure spaces $(\Omega,\mu)$ and $(\widetilde\Omega,\nu)$, respectively, and $T: E\to F$ will be a bounded, linear operator. We let $\kappa$ be the smallest constant for which the triangle inequality $\|f+g\|_F\le\kappa(\|f\|_F+\|g\|_F)$ holds. If $F$ is a Banach space, then $\kappa=1$.

A \emph{filtration} of $\Omega$ is a family $\mathcal A=\{A_\alpha:\alpha\in Q\}$ of measurable subsets of $\Omega$ indexed by a countable, totally ordered set $Q$ such that if $\alpha<\alpha'$, then $A_\alpha \subset A_{\alpha'}$.  
The \emph{maximal operator based on $T$ associated to the filtration $\mathcal A$} is the sublinear operator  $T^{*}$ defined by
\[
T^{*}f =\sup_{\alpha\in Q}|T(f\chi_{A_\alpha})|\quad\text{for all }f\in E. 
\]

First, the result with a quantitative hypothesis.
\begin{theorem} \label{thmChK2} Let $\mathcal A$ be a filtration of $\Omega$ and let $T^{*}$ be the maximal operator based on $T$ associated to $\mathcal A$. Assume that $F$ has the Fatou property. If there exist $p,q$ such that $0<p<q\le\infty$ and $\ell u<(1+\kappa^{-\tau})^{1/\tau}$, where $\ell =\ell_{(p), 2}(E)$, $u =u^{(q), 2}(F)$ and $\frac1\tau=\frac1p-\frac1q$, then 
$\|T^{*}\|_{E\to F}\le \gamma \|T\|_{E\to F}$, with
\[
\gamma=\frac{u(1+\kappa^{-\tau})^{1/q}}{\kappa^{-\tau/p}-(\ell^pu^p(1+\kappa^{-\tau})^{p/q}-1)^{1/p}}.
\]
If $F$ is a Banach space, then 
\[
\gamma=\frac{u2^{1/q}}{1-(\ell^pu^p2^{p/q}-1)^{1/p}}.
\]
\end{theorem}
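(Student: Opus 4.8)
The plan is to follow the dyadic-decomposition strategy of Christ and Kiselev, but to replace their use of the additivity of $L^p$-norms over disjoint sets by the two-term lower $p$-estimate in $E$, and their use of $L^q$ almost-orthogonality by the two-term upper $q$-estimate in $F$. First I would fix $f\in E$ and, after a routine normalization and a limiting argument (using the Fatou property of $F$ to pass from finite subfiltrations to all of $\mathcal A$), reduce to the case of a finite filtration $A_{\alpha_1}\subset\dots\subset A_{\alpha_N}$. The classical device is to build a binary tree of sets: at the root one splits $\Omega$ (or the relevant union) into two pieces $B_0\subset B_1$ along a point of $\mathcal A$ chosen so that $\|f\chi_{B_0}\|_{E_{(p)}}$ and $\|f\chi_{B_1\setminus B_0}\|_{E_{(p)}}$ are each at most $2^{-1/p}\|f\|_{E_{(p)}}$ in the renormalized lattice $E_{(p)}$ of Lemma~\ref{ellu1E} (which has $\ell_{(p)}=1$, i.e.\ genuine superadditivity of $p$-th powers over disjoint supports), and one continues recursively. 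Each $A_\alpha$ then differs from a union of tree pieces in a controlled way, so $T^*f$ is dominated, pointwise, by a sum over the levels $k\ge0$ of the maximal contribution from the $2^k$ sets at level $k$.

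Next I would estimate the $F$-norm level by level. At level $k$ there are $2^k$ disjointly supported building blocks $g_{k,1},\dots,g_{k,2^k}$, and the pointwise supremum over them is bounded by $\bigl(\sum_i |T g_{k,i}|^q\bigr)^{1/q}$ (with the usual sup modification if $q=\infty$); applying the upper $q$-estimate of $F$ in its renormalized form $F^{(q)}$ from Lemma~\ref{ellu1F} (where $u^{(q)}=1$), this has $F^{(q)}$-norm at most $\bigl(\sum_i \|Tg_{k,i}\|_{F^{(q)}}^q\bigr)^{1/q}\le \|T\|_{E_{(p)}\to F^{(q)}}\bigl(\sum_i\|g_{k,i}\|_{E_{(p)}}^q\bigr)^{1/q}$. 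Since the $g_{k,i}$ are disjointly supported and $\ell_{(p)}(E_{(p)})=1$ gives $\sum_i\|g_{k,i}\|_{E_{(p)}}^p\le\|f\|_{E_{(p)}}^p$, and since each individual block satisfies $\|g_{k,i}\|_{E_{(p)}}\le 2^{-k/p}\|f\|_{E_{(p)}}$ by the construction, interpolating between these two facts yields $\bigl(\sum_i\|g_{k,i}\|_{E_{(p)}}^q\bigr)^{1/q}\le 2^{-k(1/p-1/q)}\|f\|_{E_{(p)}}=2^{-k/\tau}\|f\|_{E_{(p)}}$. Summing the geometric series in $k$ (with ratio $2^{-1/\tau}<1$, which is exactly where $p<q$ is used) and carefully tracking the quasi-triangle constant $\kappa$ of $F$ when recombining the levels gives a bound of the shape $C(\kappa,p,q,\ell,u)\,\|T\|\,\|f\|_E$, and the condition $\ell u<(1+\kappa^{-\tau})^{1/\tau}$ is precisely what forces the resulting series/fixed-point constant to be finite and yields the displayed value of $\gamma$; in the Banach case $\kappa=1$, so $(1+\kappa^{-\tau})^{1/\tau}=2^{1/\tau}$ and the formula simplifies to the stated one.

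The main obstacle, and the place where the bookkeeping is genuinely delicate rather than routine, is the recombination step: passing from the level-by-level bounds back to a bound on $T^*f$ itself. In a Banach lattice one simply adds the levels with the triangle inequality, getting a factor $\sum_k 2^{-k/\tau}=(1-2^{-1/\tau})^{-1}$; but in a quasi-Banach lattice $F$ the naive summation loses a factor of $\kappa^k$ at level $k$, which would destroy convergence unless $\kappa 2^{-1/\tau}<1$. The correct argument instead sets up a self-improving (fixed-point) inequality: one shows that $\|T^*f\|_F$ is bounded by the level-$0$ term plus $\kappa$ times a sum of two ``half-sized'' copies of the same quantity coming from the two children $B_0$ and $B_1\setminus B_0$, then uses the $p$-estimate on $E$ to relate $\|f\chi_{B_0}\|_E^p+\|f\chi_{B_1\setminus B_0}\|_E^p$ to $\|f\|_E^p$ and solves the resulting inequality for the best constant. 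This is where the exponent $\tau$, the combination $\ell^p u^p(1+\kappa^{-\tau})^{p/q}$, and the subtraction in the denominator of $\gamma$ all emerge, and getting the constants sharp (so that the stated $\gamma$ — and the clean Banach-space special case — come out exactly) is the heart of the proof.
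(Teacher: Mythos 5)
Your high-level plan — trade $L^p$ additivity for a lower $p$-estimate on $E$, trade $L^q$ almost-orthogonality for an upper $q$-estimate on $F$, reduce to finite filtrations by Fatou, and close a recursion to extract the constant — is the right shape, but there are two genuine gaps and the first is fatal to the statement as written. You renormalize via Lemmas~\ref{ellu1E} and~\ref{ellu1F} to pass to $E_{(p)}$ and $F^{(q)}$ in which the estimates have constant one, but those lemmas require the \emph{full} constants $\ell_{(p)}(E)$ and $u^{(q)}(F)$ to be finite, whereas Theorem~\ref{thmChK2} assumes only that the two-term constants $\ell_{(p),2}(E)$ and $u^{(q),2}(F)$ are finite and small. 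These hypotheses are genuinely weaker, and the paper is explicit that Theorems~\ref{thmChK2} and~\ref{thmChK} are not nested. The renormalization route is precisely how the paper proves Theorem~\ref{thmChK} (via Corollary~\ref{lpuq}); its proof of Theorem~\ref{thmChK2} (via Theorem~\ref{lemmaCK}) works directly with the original quasi-norms exactly to stay within the two-term hypothesis. Even when the full estimates do hold, renormalizing turns $\ell$ and $u$ into outside multiplicative factors $\ell_{(p)}(E)u^{(q)}(F)$ and replaces $\kappa$ by $\max\{2,2^{1/q}\}$, so you would land on the $\delta$ of Theorem~\ref{thmChK}, not the stated $\gamma$, in which $\ell$, $u$ and $\kappa$ appear nested inside the formula.

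The second gap is in the level-by-level step. You apply the upper $q$-estimate to $\bigl(\sum_i|Tg_{k,i}|^q\bigr)^{1/q}$, but that estimate controls $\|\sum_i x_i\|_F$ only for \emph{disjointly supported} $x_i$, and the $Tg_{k,i}$ need not be disjointly supported in $\widetilde\Omega$; what you would need is $q$-convexity of $F$, a strictly stronger property the hypotheses do not give you. The paper obtains disjointness on the $\widetilde\Omega$ side by introducing the measurable partition $\widetilde\Omega_1,\dots,\widetilde\Omega_n$ recording which index realises the pointwise supremum, reducing $T^*f$ to $\bigl|\sum_{1\le j\le k\le n}\chi_{\widetilde\Omega_k}T(f\chi_{\Omega_j})\bigr|$, and then proving the key inequality (Theorem~\ref{lemmaCK}) not by a dyadic tree but by a direct induction on $n$: choose a pivot $m$ where $\|f\chi_{\Omega_1\cup\dots\cup\Omega_m}\|_E$ first reaches a threshold $\beta$, split into left, cross and right pieces, bound the cross piece directly by $\|T\|$, apply the inductive hypothesis to the left and right pieces, and combine with one use of the $\kappa$-quasi-triangle inequality and one use of the two-term upper $q$-estimate. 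The threshold $\beta$ is a free parameter optimized at the end, and that optimization is precisely what yields the condition $\ell u<(1+\kappa^{-\tau})^{1/\tau}$ and the exact $\gamma$. Your fixed split at $\beta=2^{-1/p}$ surrenders that degree of freedom, which is why your Banach-case constant $(1-2^{-1/\tau})^{-1}$ is the classical Christ--Kiselev one and strictly larger than the paper's $\frac{2^{1/q}}{1-(2^{p/q}-1)^{1/p}}$; see Remark~\ref{const}.
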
 

Next, the result with a qualitative hypothesis.
\begin{theorem} \label{thmChK}  Let $\mathcal A$ be a filtration of $\Omega$ and let $T^{*}$ be the maximal operator based on $T$ associated to the filtration $\mathcal A$. Assume that $F$ has the Fatou property. If there exist $p,q$ such that $0<p<q\le\infty$, $E$ satisfies a lower $p$-estimate and $F$ satisfies an upper $q$-estimate,  then  
$\|T^{*}\|_{E\to F}\le \delta \|T\|_{E\to F}$, where
\[
\delta=\frac{(1+\kappa_q^{-\tau})^{1/q}}{\kappa_q^{-\tau/p}-((1+\kappa_q^{-\tau})^{p/q}-1)^{1/p}}\ell_{(p)}(E)u^{(q)}(F).
\]
Here $\frac1\tau=\frac1p-\frac1q$ and $\kappa_q=\max\{2,2^{1/q}\}$. If $F$ is a Banach space and $q\ge1$, then 
\[
\delta=\frac{2^{1/q}}{1-(2^{p/q}-1)^{1/p}}\ell_{(p)}(E)u^{(q)}(F).
\]
\end{theorem}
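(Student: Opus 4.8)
The plan is to deduce Theorem~\ref{thmChK} from the quantitative version, Theorem~\ref{thmChK2}, by passing to equivalent renormings of $E$ and $F$ in which the two-term lower and upper estimates are optimal, that is, equal to~$1$.

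First I would renorm the domain. Since $E$ satisfies a lower $p$-estimate, Lemma~\ref{ellu1E} supplies a lattice quasi-norm $\|\cdot\|_{E_{(p)}}$ on $E$ with $\ell_{(p)}(E_{(p)})=1$ and $\|f\|_E\le\|f\|_{E_{(p)}}\le\ell_{(p)}(E)\|f\|_E$ for all $f\in E$; in particular $\ell_{(p),2}(E_{(p)})\le\ell_{(p)}(E_{(p)})=1$. The triangle constant of $E_{(p)}$ never enters Theorem~\ref{thmChK2}, so there is no need to know whether $p\ge1$. To renorm the range in the general case, I would apply Lemma~\ref{ellu1F}, which, since $F$ satisfies an upper $q$-estimate, gives a lattice quasi-norm $\|\cdot\|_{F^{(q)}}$ on $F$ with $u^{(q)}(F^{(q)})=1$, triangle constant at most $\kappa_q=\max\{2,2^{1/q}\}$, and $\|g\|_{F^{(q)}}\le\|g\|_F\le u^{(q)}(F)\|g\|_{F^{(q)}}$ for all $g\in F$; in particular $u^{(q),2}(F^{(q)})\le1$. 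When $F$ is a Banach space and $q\ge1$ I would instead use Lemma~\ref{comb}: it yields the same three inequalities but makes $F^{(q)}$ a genuine normed (hence Banach) lattice, and, being produced there as a K\"othe dual, it automatically has the Fatou property.

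The central step is to apply Theorem~\ref{thmChK2} to the same operator $T$, now regarded as a bounded map $E_{(p)}\to F^{(q)}$, and to the same filtration $\mathcal A$, whose associated maximal operator is again $T^*$. Writing $\ell=\ell_{(p),2}(E_{(p)})\le1$, $u=u^{(q),2}(F^{(q)})\le1$ and taking the range triangle constant to be $\kappa_q$ (respectively $1$ in the Banach case), the hypothesis $\ell u<(1+\kappa^{-\tau})^{1/\tau}$ holds since its left side is at most $1$ and its right side exceeds $1$ because $1/\tau>0$. The constant in Theorem~\ref{thmChK2} is non-decreasing in $\ell$ and in $u$, so replacing these by the upper bound $1$ only enlarges it; this gives $\|T^*\|_{E_{(p)}\to F^{(q)}}\le\gamma_0\|T\|_{E_{(p)}\to F^{(q)}}$ with $\gamma_0$ the displayed formula for $\gamma$ evaluated at $\ell=u=1$ (and $\kappa=\kappa_q$, resp.\ $\kappa=1$). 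Transferring back with the norm equivalences, one has $\|T\|_{E_{(p)}\to F^{(q)}}\le\|T\|_{E\to F}$ because $\|Tf\|_{F^{(q)}}\le\|Tf\|_F\le\|T\|_{E\to F}\|f\|_E\le\|T\|_{E\to F}\|f\|_{E_{(p)}}$, and therefore
\[
\|T^*f\|_F\le u^{(q)}(F)\|T^*f\|_{F^{(q)}}\le u^{(q)}(F)\gamma_0\|T\|_{E_{(p)}\to F^{(q)}}\|f\|_{E_{(p)}}\le\gamma_0\,\ell_{(p)}(E)\,u^{(q)}(F)\,\|T\|_{E\to F}\,\|f\|_E.
\]
This is the claim with $\delta=\gamma_0\,\ell_{(p)}(E)\,u^{(q)}(F)$, and a short computation identifies this $\gamma_0$ with the fraction written in the statement; the positivity of its denominator is exactly the instance of $\ell u<(1+\kappa^{-\tau})^{1/\tau}$ checked above.

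The step I expect to be the main obstacle is the Fatou hypothesis in the general quasi-Banach case: Theorem~\ref{thmChK2} asks its range space to have the Fatou property, and it is not evident that the renormed space $F^{(q)}$ from Lemma~\ref{ellu1F} inherits it from $F$. In the Banach case this difficulty disappears because $F^{(q)}$ is obtained from Lemma~\ref{comb} as a K\"othe dual. In general I would avoid it by first establishing the estimate for every finite subfiltration $\mathcal A_0$ of $\mathcal A$ --- for such a filtration $\sup_\alpha|T(f\chi_{A_\alpha})|$ is a finite pointwise maximum of members of $F^{(q)}$, so the Fatou property of the range is not needed in that instance of Theorem~\ref{thmChK2} --- with a bound uniform in $\mathcal A_0$, and then letting $\mathcal A_0$ increase to $\mathcal A$ and invoking the Fatou property of $F$ itself, which is available by hypothesis, to conclude that $T^*f\in F$ with the same bound on $\|T^*f\|_F$. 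A secondary, routine point is to verify the monotonicity of the constant of Theorem~\ref{thmChK2} in $\ell$ and $u$ (and, if one prefers not to take the triangle constant of $F^{(q)}$ to be exactly $\kappa_q$, also in $\kappa$) that is used above.
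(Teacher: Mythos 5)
Your proposal is correct and essentially the paper's own route: renorm via Lemmas~\ref{ellu1E}, \ref{ellu1F} (or~\ref{comb}) so the estimate constants become $1$, apply the quantitative result, transfer back by the two-sided norm equivalences, and finish with a Fatou limit over finite subfiltrations. The Fatou subtlety you flag is precisely why the paper routes through Corollary~\ref{lpuq}: that corollary applies Theorem~\ref{lemmaCK} (the finite-sum induction, which needs no Fatou hypothesis) to the renormed spaces, so $F^{(q)}$ is never required to have the Fatou property; your finite-subfiltration workaround amounts to the same thing. One small simplification: since any two-term lower or upper estimate constant is trivially $\ge 1$, Lemmas~\ref{ellu1E} and~\ref{ellu1F} in fact give $\ell_{(p),2}(E_{(p)})=u^{(q),2}(F^{(q)})=1$ exactly, so the monotonicity-in-$(\ell,u)$ check you deferred is unnecessary.
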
 

The proof of Theorems \ref{thmChK2} and \ref{thmChK} will be given after Theorem \ref{lemmaCK} and Corollary \ref{lpuq}, below. But first we look at the following special case, which extends the Christ-Kiselev maximal theorem to Lebesgue indices less than~$1$. 

It is important to point out that when $0<p<1$ and $p<q<\infty$, any bounded, linear operator $T:L^p_\mu\to L^q_\nu$ necessarily maps functions supported on the non-atomic part of $\mu$ to zero. So the case $0<p<1$ of the following extension is only of interest when $\mu$ is a purely atomic measure.

\begin{corollary}\label{pq} Let $(\Omega,\mu)$ and $(\widetilde\Omega,\nu)$ be measure spaces. Suppose $0<p<q\le\infty$ and $T$ is a bounded, linear operator from $L^p_\mu$ to $L^q_\nu$. Let $\mathcal A$ be a filtration of $\Omega$ and let $T^{*}$ be the maximal operator based on $T$ associated to the filtration $\mathcal A$. Then $\|T^{*}\|\le \gamma\|T\|$,
with
\[
\gamma=\begin{cases}\displaystyle\frac{2^{1/q}}{1-(2^{p/q}-1)^{1/p}},&\text{ if $q\ge1$};\\ 
\displaystyle\frac{(1+2^{(q-1)p/(q-p)})^{1/q}}{2^{(q-1)/(q-p)} - ((1+2^{(q-1)p/(q-p)})^{p/q}-1)^{1/p}},&\text{ if $q<1$}.\end{cases}
\]
\end{corollary}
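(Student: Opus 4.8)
The plan is to deduce the corollary by a direct specialization of Theorem \ref{thmChK2} to $E=L^p_\mu$ and $F=L^q_\nu$. Since $p<q\le\infty$ forces $p<\infty$, every $f\in L^p_\mu$ is supported on a $\sigma$-finite set, and since the filtration $\mathcal A$ is countable, a routine reduction lets us assume $\mu$ and $\nu$ are $\sigma$-finite and complete, so that $E$ and $F$ satisfy the standing hypotheses of Section 3. That $F=L^q_\nu$ has the Fatou property is immediate from the monotone convergence theorem. The first substantive step is to identify the two lattice constants appearing in Theorem \ref{thmChK2}. For disjointly supported $f_1,f_2$ and any $0<r<\infty$ one has $\|f_1+f_2\|_{L^r}^r=\|f_1\|_{L^r}^r+\|f_2\|_{L^r}^r$, with the evident modification ($\max$ in place of the sum) when $r=\infty$; this gives $\ell:=\ell_{(p),2}(L^p_\mu)=1$ and $u:=u^{(q),2}(L^q_\nu)=1$.

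Next I would pin down $\kappa$, the optimal quasi-triangle constant of $F=L^q_\nu$. When $q\ge1$ this is simply $\kappa=1$, and Theorem \ref{thmChK2} is applied in its Banach-space form. When $q<1$, the pointwise subadditivity of $t\mapsto t^q$ gives $\|f+g\|_q^q\le\|f\|_q^q+\|g\|_q^q$, and then the power-mean inequality (valid since $1/q>1$) yields $\|f+g\|_q\le 2^{1/q-1}(\|f\|_q+\|g\|_q)$; testing on two disjointly supported functions of equal norm shows $2^{1/q-1}$ cannot be improved, so $\kappa=2^{1/q-1}$ exactly. With $\tau$ defined by $\frac1\tau=\frac1p-\frac1q$, i.e. $\tau=\frac{pq}{q-p}$, this yields $\kappa^{-\tau}=2^{(1-1/q)\tau}=2^{p(q-1)/(q-p)}$ and $\kappa^{-\tau/p}=2^{(q-1)/(q-p)}$.

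With these values in hand I would verify the quantitative hypothesis of Theorem \ref{thmChK2}: since $\ell u=1$ while $\kappa^{-\tau}>0$ and $\tau>0$, we have $(1+\kappa^{-\tau})^{1/\tau}>1=\ell u$, so the hypothesis holds in both cases. The same computation shows the denominator of $\gamma$ is strictly positive, because after raising $\kappa^{-\tau/p}>\bigl((1+\kappa^{-\tau})^{p/q}-1\bigr)^{1/p}$ to the power $p$ and rearranging, this inequality is seen to be equivalent to $(1+\kappa^{-\tau})^{1/\tau}>\ell u$. Substituting $\ell=u=1$ into the conclusion of Theorem \ref{thmChK2} then finishes the proof: for $q\ge1$ the Banach-space formula gives $\gamma=2^{1/q}\bigl(1-(2^{p/q}-1)^{1/p}\bigr)^{-1}$, where $2^{p/q}-1<1$ precisely because $p<q$; for $q<1$ the general formula, together with the values of $\kappa^{-\tau}$ and $\kappa^{-\tau/p}$ recorded above, produces exactly the second displayed expression for $\gamma$. (Theorem \ref{thmChK} would also give the result, but with $\kappa_q=2^{1/q}>2^{1/q-1}=\kappa$ in the range $q<1$ it yields a larger constant, which is why Theorem \ref{thmChK2} is the one to use.)

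There is no deep obstacle here; the entire content is bookkeeping. The one point requiring a moment's care is the identification of $\kappa=2^{1/q-1}$ as the \emph{sharp} quasi-triangle constant of $L^q_\nu$ for $q<1$, together with the correct translation of the abstract quantities $\kappa^{-\tau}$ and $\kappa^{-\tau/p}$ into the explicit exponents $p(q-1)/(q-p)$ and $(q-1)/(q-p)$; everything else is an immediate substitution into Theorem \ref{thmChK2}.
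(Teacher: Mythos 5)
Your proof is correct and takes exactly the same route as the paper: identify $\ell_{(p),2}(L^p_\mu)=u^{(q),2}(L^q_\nu)=1$ and $\kappa=\max\{1,2^{1/q-1}\}$, then substitute into Theorem \ref{thmChK2}. The paper states these facts in two sentences without the intermediate algebra; your expansion of $\kappa^{-\tau}=2^{(q-1)p/(q-p)}$ and $\kappa^{-\tau/p}=2^{(q-1)/(q-p)}$, as well as the remark about reducing to $\sigma$-finite measures, are accurate and merely fill in details the paper leaves implicit.
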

\begin{proof} Observe that $\ell_{(p), 2}(L^p_\mu)=u^{(q), 2}(L^q_\nu)=1$ and the triangle inequality in $L^q_\nu$ holds with constant $\kappa=\max(1,2^{(1/q)-1})$. The result follows from  Theorem \ref{thmChK2}.
\end{proof}
\begin{remark}\label{const} The constant $\gamma=\frac{2^{1/q}}{1-(2^{p/q}-1)^{1/p}}$, in the case $1\le p<q\le\infty$ above, is less than $(1-2^{1/q-1/p})^{-1}$, the corresponding constant from {\rm Theorem \ref{CKorig}.} One way to see this is to let $h(t)=t+2^{1/p}t^{-1} -(2-t^p)^{1/p}$ for $t\in[1,2^{1/p}]$. The form of $h'(t)=1-2^{1/p}t^{-2} +t^{p-1}(2-t^p)^{-1/p'}$ shows directly that $h'(t)>h'(1)\ge0$ for all $t\in(1,2^{1/p})$. Thus, $h$ is strictly increasing and so $h(2^{1/p-1/q})>h(1)$, which gives the result. 

In addition, $\gamma\to1$ as $q\to\infty$, but $(1-2^{1/q-1/p})^{-1}\to(1-2^{-1/p})^{-1}$, which is not bounded above as $p\to\infty$. 
\end{remark}

The key induction argument needed to prove Theorems  \ref{thmChK2} and \ref{thmChK} is isolated in the proof of the next result, which follows the method of  \cite[Theorem 8.7]{Tao}. The most important feature, for our purposes, is that the constant $\gamma$ does not depend on $n$. 

\begin{theorem} \label{lemmaCK} Let $\mu$, $\nu$, $E$, $F$, $T$ and $\kappa$ be as above. If there exist $p$ and $q$ such that $0<p<q\le\infty$ and $\ell u<(1+\kappa^{-\tau})^{1/\tau}$, where $\ell =\ell_{(p), 2}(E)$, $u =u^{(q), 2}(F)$ and $\frac1\tau=\frac1p-\frac1q$, then for each positive integer $n$, 
\[
\Big\|\sum_{1\le j\le k\le n}\chi_{\widetilde\Omega_k}T(f\chi_{\Omega_j})\Big\|_F\le \gamma \,\|T\|_{E\to F}\|f\chi_{\Omega_1\cup\dots\cup\,\Omega_n}\|_E, \quad\, f\in E\,,
\]
whenever $\{\Omega_j\}_{j=1}^n$ are disjoint measurable subsets of $\Omega$ and $\{\widetilde\Omega_j\}_{j=1}^n$ are disjoint measurable subsets of $\widetilde\Omega$. Here 
\[
\gamma=\frac{u(1+\kappa^{-\tau})^{1/q}}{\kappa^{-\tau/p}-(\ell^pu^p(1+\kappa^{-\tau})^{p/q}-1)^{1/p}}.
\]

\end{theorem}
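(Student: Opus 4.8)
The plan is to prove the estimate by induction on $n$, following the dyadic splitting scheme of \cite[Theorem 8.7]{Tao}. Let $S_n$ denote the best constant $\gamma$ such that the claimed inequality holds for every choice of $n$ disjoint sets $\{\Omega_j\}$, $\{\widetilde\Omega_j\}$, every bounded linear $T\colon E\to F$, and every $f\in E$; we normalize $\|T\|_{E\to F}=1$ and $\|f\chi_{\Omega_1\cup\dots\cup\Omega_n}\|_E=1$. The base case $n=1$ is immediate: $\|\chi_{\widetilde\Omega_1}T(f\chi_{\Omega_1})\|_F\le\|T(f\chi_{\Omega_1})\|_F\le\|f\chi_{\Omega_1}\|_E\le1$, so $S_1\le1$. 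For the inductive step I would split the index range $\{1,\dots,n\}$ at a cut point $m$ so that $\|f\chi_{\Omega_1\cup\dots\cup\Omega_m}\|_E$ and $\|f\chi_{\Omega_{m+1}\cup\dots\cup\Omega_n}\|_E$ are as balanced as possible; more precisely, using the lattice property and a discrete intermediate value argument, choose $m$ so that, writing $a=\|f\chi_{\Omega_1\cup\dots\cup\Omega_m}\|_E$ and $b=\|f\chi_{\Omega_{m+1}\cup\dots\cup\Omega_n}\|_E$, the lower $2$-term estimate gives $(a^p+b^p)^{1/p}\le\ell\|f\chi_{\Omega_1\cup\dots\cup\Omega_n}\|_E=\ell$ while the ``balance'' is good enough to run the recursion.

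The main sum then decomposes as
\[
\sum_{1\le j\le k\le n}\chi_{\widetilde\Omega_k}T(f\chi_{\Omega_j})
=\underbrace{\sum_{1\le j\le k\le m}}_{\text{I}}
+\underbrace{\sum_{m<j\le k\le n}}_{\text{II}}
+\underbrace{\sum_{j\le m<k}\chi_{\widetilde\Omega_k}T(f\chi_{\Omega_j})}_{\text{III}}.
\]
Term I is supported on $\widetilde\Omega_1\cup\dots\cup\widetilde\Omega_m$ and term II on $\widetilde\Omega_{m+1}\cup\dots\cup\widetilde\Omega_n$, so these have disjoint supports; the upper $2$-term estimate for $F$ controls $\|\mathrm{I}+\mathrm{II}+\mathrm{III}\|_F$ by $\kappa$ times... — more carefully, I would first use the quasi-triangle inequality with constant $\kappa$ to peel off III, then the upper $2$-estimate $u=u^{(q),2}(F)$ to combine I and II, obtaining something of the shape
\[
\Big\|\sum_{1\le j\le k\le n}\cdots\Big\|_F\le\kappa\Big(u\big(\|\mathrm I\|_F^q+\|\mathrm{II}\|_F^q\big)^{1/q}+\|\mathrm{III}\|_F\Big).
\]
By the induction hypothesis applied to the two half-ranges (with the renormalized functions), $\|\mathrm I\|_F\le S_m\,a$ and $\|\mathrm{II}\|_F\le S_{n-m}\,b$. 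The cross term III equals $\chi_{\widetilde\Omega_{m+1}\cup\dots\cup\widetilde\Omega_n}\,T(f\chi_{\Omega_1\cup\dots\cup\Omega_m})$, a single application of $T$ to a function of $E$-norm $a$, so $\|\mathrm{III}\|_F\le a$. Assembling, with $S:=\sup_k S_k$, one gets $S\le\kappa\big(uS(a^p+b^p)^{1/q}\cdot(\text{balance factor})+a\big)$ after optimizing; the key is that choosing the cut to balance $a^p$ and $b^p$ turns $(a^p+b^p)^{1/q}$ into a constant strictly less than $1$ in a way that closes the recursion. Solving the resulting inequality $S\le\kappa(c\,uS+1)$ — with $c$ depending on $p,q,\kappa$ through the optimal balance — for $S$ yields exactly the stated $\gamma$, using the hypothesis $\ell u<(1+\kappa^{-\tau})^{1/\tau}$ precisely to guarantee $c\,u\kappa<1$ so the geometric-type series converges and $S$ is finite.

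The hard part, and the place where the precise form of $\gamma$ is forced, is the optimization over the cut point $m$ combined with the optimization hidden in applying the upper $q$-estimate to I and II: one does not simply balance $a=b$, but rather chooses the split and distributes the ``mass'' between the $\kappa uS(\cdots)^{1/q}$ term and the $\kappa a$ term so that the worst case over all admissible $(a,b)$ with $(a^p+b^p)^{1/p}\le\ell$ is minimized. This is a one-variable calculus problem whose extremal configuration produces the exponents $1/q$, $-\tau/p$, $\tau/q$ and the factor $(1+\kappa^{-\tau})$ appearing in $\gamma$; I would set $t=a/\|f\chi_{\Omega_1\cup\dots\cup\Omega_n}\|_E$, express the bound on $S$ as $S\le\kappa\big(uS(t^q+(\text{rest}))^{\cdots}+t\big)$, and identify the critical $t$ by differentiation, exactly as the factor $\kappa^{-\tau}=(\|\mathrm{III}\|_F\text{-weight})^{\tau}$ suggests. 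Once the scalar inequality is in hand, solving it for $S$ is routine algebra, and a subsequent limiting/normalization argument removes the dependence on the normalizations and delivers the theorem; the Fatou property of $F$ is not needed here (the sums are finite) but will be used in the passage from Theorem~\ref{lemmaCK} to Theorems~\ref{thmChK2} and~\ref{thmChK}.
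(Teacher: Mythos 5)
There is a genuine gap, and it lies exactly in the two places where your argument deviates from the paper: the \emph{choice} of the cut point $m$ and the \emph{decomposition} of the sum around it.

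Your block $\mathrm I=\sum_{1\le j\le k\le m}$ uses $\Omega_1,\dots,\Omega_m$, so the only inductive bound available for it is $\|\mathrm I\|_F\le\gamma\,a$ where $a=\|f\chi_{\Omega_1\cup\dots\cup\Omega_m}\|_E$, and $a$ can be forced arbitrarily close to $1$ regardless of $m$ (take $f$ essentially supported on $\Omega_1$, so that every partial norm $\|f\chi_{\Omega_1\cup\dots\cup\Omega_m}\|_E$ is $\approx1$; there is no discrete intermediate value to exploit, because the sequence of partial norms jumps from $0$ straight to $1$). Feeding this into your recursion $\gamma\le\kappa\big(u\gamma(a^q+b^q)^{1/q}+a\big)$ requires $\kappa u(a^q+b^q)^{1/q}<1$, which fails whenever $a$ is near $1$, since $\kappa,u\ge1$. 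The ``balance'' optimization you describe cannot repair this: the admissible pairs $(a,b)$ are not continuously controllable, and the worst case $a\approx1$, $b\approx0$ sits inside your admissible set.

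The paper avoids this by making two coupled changes. First, the cut point $m$ is chosen by a \emph{threshold}, not by balance: fix $\beta\in(0,1)$ and take $m$ to be the first index with $\|f\chi_{\Omega_1\cup\dots\cup\Omega_m}\|_E\ge\beta$, so that $\|f\chi_{\Omega_1\cup\dots\cup\Omega_{m-1}}\|_E<\beta$ with $\beta$ a constant you are free to choose. Second, and crucially, the lower-left block is taken to be $\sum_{1\le j\le k<m}$ (strict inequality $k<m$), which lives on $\Omega_1,\dots,\Omega_{m-1}$ and $\widetilde\Omega_1,\dots,\widetilde\Omega_{m-1}$ and is therefore bounded by $\gamma\beta$, a quantity under your control. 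The remaining terms $\sum_{1\le j\le m\le k\le n}$ — that is, the entire row $k=m$ together with your block $\mathrm{III}$ — collapse by linearity to a single application $\chi_{\widetilde\Omega_m\cup\dots\cup\widetilde\Omega_n}T(f\chi_{\Omega_1\cup\dots\cup\Omega_m})$, with norm $\le1$. Your decomposition mistakenly routes the row $k=m$ into $\mathrm I$, which is what forces the uncontrollable $\gamma a$ bound. There is also a subsidiary point worth noting: the paper first combines the cross and upper-right terms (both supported on $\widetilde\Omega_m\cup\dots\cup\widetilde\Omega_n$) with the quasi-triangle inequality and only then applies the upper $q$-estimate against the lower-left block, so $\kappa$ multiplies only two of the three pieces; your ordering (quasi-triangle last) places $\kappa$ on everything and would give a weaker constant even if the recursion closed.

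To repair your argument: use the threshold definition of $m$, take the lower-left block with $k<m$, absorb the entire row and column at index $m$ into a single $T$ application, and reverse the order of the quasi-triangle inequality and the upper $q$-estimate. With those changes, the recursion closes to
\[
\gamma^q \ge u^q\big((\gamma\beta)^q+\kappa^q\big(1+\gamma(\ell^p-\beta^p)^{1/p}\big)^q\big),
\]
and optimizing over $\beta$ produces the stated $\gamma$ and the hypothesis $\ell u<(1+\kappa^{-\tau})^{1/\tau}$.
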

\begin{proof} Without loss of generality, assume that $\|T\|_{E\to F}=1$. Fix $\beta\in(0,1)$ and $\gamma\ge 1$, both to be determined later. We proceed by induction. If $n=1$, $\Omega_1\subset \Omega$, $\widetilde\Omega_1\subset \widetilde\Omega$, and $f\in E$, then
\[
\|\chi_{\widetilde\Omega_1}T(f\chi_{\Omega_1})\|_F\le\|T(f\chi_{\Omega_1})\|_F \le \|f\chi_{\Omega_1}\|_E
\le \gamma \|f\chi_{\Omega_1}\|_E.
\]

Now suppose $n\ge2$ and the statement holds (with our fixed $\gamma$) for all positive integers smaller than $n$. To prove that it also holds for $n$, fix $\Omega_1,\dots,\Omega_n$ and $\widetilde\Omega_1,\dots,\widetilde\Omega_n$ as above and let $f\in E$, scaled so that $\|f\chi_{\Omega_1\cup \dots \cup\,\Omega_n}\|_E=1$. (The conclusion is trivial if $f=0$ a.e. on $\Omega_1\cup \dots \cup\,\Omega_n$.) Observe that
\[
0=\|f\chi_\emptyset\|_E\le \|f\chi_{\Omega_1}\|_E
\le\|f\chi_{\Omega_1\cup\Omega_2}\|_E\le\dots\le \|f\chi_{\Omega_1\cup\dots\cup\Omega_n}\|_E=1.
\]
Choose $m$ with $1\le m \le n$ to satisfy
\[
\|f\chi_{\Omega_1\cup\dots\cup\Omega_{m-1}}\|_E < \beta \le \|f\chi_{\Omega_1\cup\dots \cup \Omega_m}\|_E.
\]
By the definition of $\ell =\ell_{(p),2}$, we get
\[
\|f\chi_{\Omega_{m+1}\cup\dots\cup\Omega_n}\|^p_E
\le \ell^p \|f\chi_{\Omega_1\cup\dots\cup \,\Omega_n}\|^p_E-\|f\chi_{\Omega_1 \cup \dots \cup \,\Omega_m}\|^p_E \le \ell^p-\beta^p\,
\]
so we may apply the inductive hypothesis to the sequences $\{\Omega_{m+1},\dots,\Omega_n\}$ and $\{\widetilde\Omega_{m+1},\dots,\widetilde\Omega_n\}$ to get
\[
\Big\|\sum_{m<j\le k\le n}\chi_{\widetilde\Omega_k}T(f\chi_{\Omega_j})\Big\|_F
\le \gamma \|f\chi_{\Omega_{m+1}\cup\dots\cup \,\Omega_n}\|_E\le \gamma(\ell^p-\beta^p)^{1/p}.
\]
Additionally, the linearity of $T$ implies
\[
\Big\|\sum_{1\le j\le m\le k\le n}\chi_{\widetilde\Omega_k}T(f\chi_{\Omega_j})\Big\|_F
=\|\chi_{\widetilde\Omega_m\cup\dots\cup\widetilde\Omega_n}T(f\chi_{\Omega_1\cup\dots\cup \,\Omega_m})\|_F \le 1.
\]
These two estimates, and the triangle inequality in $F$, imply
\[
\Big\|\sum_{1\le j\le k\le n,\, k\ge m}\chi_{\widetilde\Omega_k}T(f\chi_{\Omega_j})\Big\|_F\le\kappa(1+\gamma(\ell^p-\beta^p)^{1/p}).
\]
Applying the inductive hypothesis to $\{\Omega_1,\dots,\Omega_{m-1}\}$ and $\{\widetilde\Omega_1,\dots,\widetilde\Omega_{m-1}\}$, we get
\[
\Big\|\sum_{1\le j\le k< m}\chi_{\widetilde\Omega_k}T(f\chi_{\Omega_j})\Big\|_F \le \gamma \|f\chi_{\Omega_1\cup\dots\cup \Omega_{m-1}}\|_E
\le  \gamma\beta.
\]
Now we proceed in two cases. 

First suppose $q<\infty$. The sums in the last two inequalities yield disjointly supported functions so the upper $q$-estimate for $F$ yields
\[
\Big\|\sum_{1\le j\le k\le n}\chi_{\widetilde\Omega_k}T(f\chi_{\Omega_j})\Big\|_F^q
\le u^q((\gamma\beta)^q +\kappa^q(1+\gamma(\ell^p-\beta^p)^{1/p})^q).
\]
If $\beta$ and $\gamma$ can be chosen so that the last expression equals $\gamma^q$, then the induction is complete. Setting $u^q((\gamma\beta)^q +\kappa^q(1+\gamma(\ell^p-\beta^p)^{1/p})^q)=\gamma^q$, we isolate $\gamma$ as
\[
\gamma=\frac {u\kappa }{(1-(u\beta)^q)^{1/q}-\kappa((\ell u)^p-(u\beta)^p)^{1/p}}.
\]
Setting $t=u\beta$, the first term in the denominator imposes the restriction $0<t\le1$ and, to ensure that $\gamma>0$, we get the additional restriction $\ell u<(t^p+\kappa^{-p}(1-t^q)^{p/q})^{1/p}$. Note that $\gamma\ge1$ whenever it is positive. We want to impose the mildest possible condition on $u$ and $\ell$, which means finding the maximum of the right hand side. Calculus shows that it occurs at $t=(1+\kappa^{-\tau})^{-1/q}$ and gives the condition $\ell u<(1+\kappa^{-\tau})^{1/\tau}$. So the best choice of $\beta$ is $u^{-1}(1+\kappa^{-\tau})^{-1/q}$, which is clearly less than $1$. Putting it into the expression for $\gamma$ gives the statement of the theorem. 

Next consider the case $q=\infty$. This time the $q$-estimate yields
\[
\Big\|\sum_{1\le j\le k\le n}\chi_{\widetilde\Omega_k}T(f\chi_{\Omega_j})\Big\|_F
\le \max\{u\gamma\beta,u\kappa(1+\gamma(\ell^p-\beta^p)^{1/p})\}.
\]
We need a $\beta\in(0,1)$ and a $\gamma\ge1$ for which this expression is at most $\gamma$. The obvious choice, $\beta=1/u$, may violate the $\beta<1$ requirement so instead we note that $\tau=p$ and use the hypothesis $\ell^p u^p<1+\kappa^{-p}$ to choose $\beta$ so that $\ell^pu^p-\kappa^{-p}<u^p\beta^p<1$. Clearly, $0<\beta<1$. Now we set the second term in the maximum equal to $\gamma$ and solve to get
\[
\gamma=\frac {u\kappa }{1-\kappa((\ell u)^p-(u\beta)^p)^{1/p}}.
\]
Since $u\beta<1\le\ell u$, the denominator lies between $0$ and $1$. But both $u$ and $\kappa$ are at least $1$, so $\gamma\ge1$. The choice of $\beta$ shows that the first term in the maximum is less than $\gamma$. This completes the induction. Now we let $\beta$ increase to $1/u$ and observe that $\gamma$ tends to the constant in the statement of the theorem.
\end{proof}

If $E$ satisfies a lower $p$-estimate and $F$ satisfies an upper $q$-estimate, then Lemmas \ref{ellu1E} and \ref{ellu1F} give us equivalent quasi-norms under which both estimates have constant $1$. Equipped with the new quasi-norms, $E$ and $F$ satisfy the condition of Lemma \ref{lemmaCK} to give the following result.

\begin{corollary} \label{lpuq} Let $\mu$, $\nu$, $E$, $F$, $T$ and $\kappa$ be as above. If there exist $p,q$ such that $0<p<q\le\infty$, $E$ satisfies a lower $p$-estimate and $F$ satisfies an upper $q$-estimate, then for each positive integer $n$, 
\[
\Big\|\sum_{1\le j\le k\le n}\chi_{\widetilde\Omega_k}T(f\chi_{\Omega_j})\Big\|_F\le \delta\|T\|_{E\to F}\|f\chi_{\Omega_1\cup\dots\cup\,\Omega_n}\|_E, \quad f\in E,
\]
whenever $\{\Omega_j\}_{j=1}^n$ are disjoint measurable subsets of $\Omega$ and $\{\widetilde\Omega_j\}_{j=1}^n$ are disjoint measurable subsets of $\widetilde\Omega$. Here 
\[
\delta=\frac{(1+\kappa_q^{-\tau})^{1/q}}{\kappa_q^{-\tau/p}-((1+\kappa_q^{-\tau})^{p/q}-1)^{1/p}}\ell_{(p)}(E)u^{(q)}(F),
\]
where $\kappa_q=\max\{2,2^{1/q}\}$ and $\frac1\tau=\frac1p-\frac1q$. If $F$ is a Banach space and $q\ge1$, then
\[
\delta=\frac{2^{1/q}}{1-(2^{p/q}-1)^{1/p}}\ell_{(p)}(E)u^{(q)}(F)
\]
also works.
\end{corollary}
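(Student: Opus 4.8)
The plan is to deduce this from Theorem \ref{lemmaCK} after replacing $E$ and $F$ by equivalently renormed versions on which the two-term estimates have constant $1$. First I would apply Lemma \ref{ellu1E} to $E$, obtaining a lattice quasi-norm $\|\cdot\|_{E_{(p)}}$ with $\ell_{(p)}(E_{(p)})=1$ and $\|f\|_E\le\|f\|_{E_{(p)}}\le\ell_{(p)}(E)\|f\|_E$ for all $f\in E$, and Lemma \ref{ellu1F} to $F$, obtaining a lattice quasi-norm $\|\cdot\|_{F^{(q)}}$ with $u^{(q)}(F^{(q)})=1$, triangle-inequality constant at most $\kappa_q=\max\{2,2^{1/q}\}$, and $\|g\|_{F^{(q)}}\le\|g\|_F\le u^{(q)}(F)\|g\|_{F^{(q)}}$ for all $g\in F$. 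Write $E_{(p)}$ and $F^{(q)}$ for $E$ and $F$ equipped with these quasi-norms; both are again quasi-Banach function lattices, over $(\Omega,\mu)$ and $(\widetilde\Omega,\nu)$ respectively, because the new quasi-norms are equivalent to the originals.

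Next I would check that the pair $(E_{(p)},F^{(q)})$ satisfies the hypothesis of Theorem \ref{lemmaCK} with $\kappa_q$ serving as the triangle constant for $F^{(q)}$ (the proof of Theorem \ref{lemmaCK} uses that constant only as an upper bound in the triangle inequality, so any valid value, in particular $\kappa_q$, may be substituted). One may assume $E\ne\{0\}$ and $F\ne\{0\}$, the conclusion being trivial otherwise. Taking $x_1$ nonzero and $x_2=0$ shows $\ell_{(p),2}(E_{(p)})\ge1$, and similarly $u^{(q),2}(F^{(q)})\ge1$; together with $\ell_{(p)}(E_{(p)})=u^{(q)}(F^{(q)})=1$ this forces $\ell:=\ell_{(p),2}(E_{(p)})=1$ and $u:=u^{(q),2}(F^{(q)})=1$. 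Since $\kappa_q\ge1$ and $p<q$, we have $(1+\kappa_q^{-\tau})^{1/\tau}>1=\ell u$, so Theorem \ref{lemmaCK} applies and gives, for every $n$,
\[
\Big\|\sum_{1\le j\le k\le n}\chi_{\widetilde\Omega_k}T(f\chi_{\Omega_j})\Big\|_{F^{(q)}}\le\gamma'\,\|T\|_{E_{(p)}\to F^{(q)}}\,\|f\chi_{\Omega_1\cup\dots\cup\,\Omega_n}\|_{E_{(p)}},
\]
where $\gamma'=\frac{(1+\kappa_q^{-\tau})^{1/q}}{\kappa_q^{-\tau/p}-((1+\kappa_q^{-\tau})^{p/q}-1)^{1/p}}$ is the constant of Theorem \ref{lemmaCK} evaluated at $\ell=u=1$; here $(1+\kappa_q^{-\tau})^{p/q}-1>0$, and the denominator is positive because $p/q<1$.

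Finally I would pull the estimate back to the original norms. Using $\|h\|_F\le u^{(q)}(F)\|h\|_{F^{(q)}}$, then $\|T\|_{E_{(p)}\to F^{(q)}}\le\|T\|_{E\to F}$ (which follows from $\|Th\|_{F^{(q)}}\le\|Th\|_F\le\|T\|_{E\to F}\|h\|_E\le\|T\|_{E\to F}\|h\|_{E_{(p)}}$), and then $\|f\chi_{\Omega_1\cup\dots\cup\,\Omega_n}\|_{E_{(p)}}\le\ell_{(p)}(E)\|f\chi_{\Omega_1\cup\dots\cup\,\Omega_n}\|_E$, I obtain the asserted inequality with $\delta=\gamma'\,\ell_{(p)}(E)\,u^{(q)}(F)$, which is exactly the stated formula. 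For the case $q\ge1$ with $F$ a Banach space, the only change is to renorm $F$ using Lemma \ref{comb} in place of Lemma \ref{ellu1F}: this provides a genuine lattice \emph{norm} on $F$ with $u^{(q)}(F^{(q)})=1$, so $F^{(q)}$ is a Banach function lattice and the ``Banach space'' clause of Theorem \ref{lemmaCK} applies, giving $\gamma'=2^{1/q}/(1-(2^{p/q}-1)^{1/p})$ and hence the second formula for $\delta$. There is no serious obstacle here---the substantive induction is already carried out in Theorem \ref{lemmaCK}---but one must be careful that the renormed spaces are genuinely quasi-Banach function lattices with exactly the claimed constants, and that the Banach case invokes Lemma \ref{comb} rather than Lemma \ref{ellu1F} (the latter only delivers triangle constant $2$ when $q\ge1$, which would yield the first, weaker, constant instead of the second).
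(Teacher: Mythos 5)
Your proposal is correct and follows essentially the same approach as the paper's proof: renorm via Lemmas \ref{ellu1E} and \ref{ellu1F} (or Lemma \ref{comb} in the Banach, $q\ge1$ case), apply Theorem \ref{lemmaCK} to the renormed spaces with $\ell=u=1$ and triangle constant $\kappa_q$, and pull the estimate back through the norm equivalences. Your side remark that the proof of Theorem \ref{lemmaCK} only needs $\kappa$ to be \emph{some} valid triangle constant, not necessarily the smallest, is a fair point of care that the paper passes over silently.
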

\begin{proof} Let $E_{(p)}$ and $F^{(q)}$ be the spaces introduced in Lemmas \ref{ellu1E} and \ref{ellu1F} and note that the triangle inequality in $F^{(q)}$ holds with constant $\kappa_q=\max\{2,2^{1/q}\}$. Also note that $u^{(q)}(F^{(q)})=\ell_{(p)}(E_{(p)})=1$. Let
\[
\mathcal Tf =\sum_{1\le j\le k\le n}\chi_{\widetilde\Omega_k}T(f\chi_{\Omega_j}), \quad f\in E.
\]
Replacing $E$ by $E_{(p)}$ and $F$ by $F^{(q)}$ in Theorem \ref{lemmaCK} we get
\[
\|\mathcal Tf\|_{F^{(q)}}\le\delta_0\|f\|_{E_{(p)}},
\]
where $\delta_0$ is the value of $\gamma$ when $u=\ell=1$ and $\kappa=\kappa_q$. Returning to the original quasi-norms we have
\[
\|\mathcal Tf\|_F\le u^{(q)}(F)\|\mathcal Tf\|_{F^{(q)}}\le u^{(q)}(F)\delta_0\|f\|_{E_{(p)}}\le \delta_0\ell_{(p)}(E)u^{(q)}(F)\|f\|_E.
\] 
This proves the first statement, with $\delta=\delta_0\ell_{(p)}(E)u^{(q)}(F)$. 

If $F$ is a Banach space and $q\ge1$, we may use the space $F^{(q)}$ from Lemma \ref{comb} instead and repeat the above argument with $\kappa=1$. This proves the second statement.
\end{proof}

We are now in a position
to prove Theorems $\ref{thmChK2}$ and  $\ref{thmChK}$ based on Theorem \ref{lemmaCK} and Corollary \ref{lpuq}. The presentation follows  \cite[Corollary 8.8]{Tao}.

\begin{proof}[Proof of Theorems $\ref{thmChK2}$ and  $\ref{thmChK}$]  We will prove Theorem $\ref{thmChK2}$ only. Theorem $\ref{thmChK}$ follows from Corollary \ref{lpuq} in exactly the same way as Theorem $\ref{thmChK2}$ follows from Theorem \ref{lemmaCK}.

Suppose $Q$ is a finite set, say
$Q=\{\alpha_1,\dots,\alpha_n\}$ with $\alpha_1 <\alpha_2 <\dots <\alpha_n$. Fix $f\in E$ and for every $y\in \widetilde\Omega$
choose  $\alpha(y)\in Q$ so that
\[
T^*f(y)=|T(f\chi_{A_{\alpha(y)}})(y)|.
\]
In $\Omega$, define $\Omega_1,\dots,\Omega_n$ by setting $\Omega_1=A_{\alpha_1}$ and $\Omega_j=A_{\alpha_j}\setminus A_{\alpha_{j-1}}$ for $j=2,\dots,n$. In $\widetilde\Omega$, set 
$\widetilde\Omega_k=\{y\in \widetilde\Omega: \alpha(y)=\alpha_k\}$ for each $k\in \{1,\dots, n\}$. We will apply Theorem \ref{lemmaCK}  to these sequences of disjoint sets. Note that each $\Omega_j$ is $\mu$-measurable and each $\widetilde\Omega_k$ is $\nu$-measurable.

The choice of $\widetilde\Omega_1,\dots,\widetilde\Omega_n$ ensures that, for each $y\in\widetilde\Omega$,
\[
T(f\chi_{A_{\alpha(y)}})(y)=\sum_{k=1}^n\chi_{\widetilde\Omega_k}(y)T(f\chi_{A_{\alpha_k}})(y).
\]
For each $k$,  $A_{\alpha_k}$ is the disjoint union of $\Omega_1,\dots,\Omega_k$, and $T$ is linear, so  
\[
T(f\chi_{A_{\alpha_k}})=T\Big( f\sum_{j=1}^k\chi_{\Omega_j}\Big)=\sum_{j=1}^kT(f\chi_{\Omega_j}).
\]
Combining these, we obtain
\[
T^{*}f=\Big|\sum_{1\le j\le k\le n}\chi_{\widetilde\Omega_k}T(f\chi_{\Omega_j})\Big|.
\]
Since $f$ was arbitrary in $E$, Theorem \ref{lemmaCK} shows that 
\[
\|T^{*}\|_{E\to F}\le \gamma\|T\|_{E\to F}.
\]

Next suppose $Q$ is countably infinite. Express $Q$ as the union of an increasing sequence of finite subsets $Q_m$ to get, for each $y\in\widetilde\Omega$,
\[
T^{*}f(y)=\sup_{\alpha\in Q}|T(f\chi_{A_\alpha})(y)|=\lim_{m\to\infty}\sup_{\alpha\in Q_m}|T(f\chi_{A_\alpha})(y)|,
\]
an increasing limit. The Fatou property of $F$ ensures that
\[
\|T^{*}f\|_F=\lim_{m\to\infty}\big\|\sup_{\alpha\in Q_m}|T(f\chi_{A_\alpha})|\big\|_F
\le \gamma\|T\|_{E\to F}\|f\|_E.
\]
This completes the proof.
\end{proof}

We conclude with a remark similar to one in \cite{ChK1}. Because the constants appearing in the conclusions of Theorems \ref{thmChK2} and \ref{thmChK} are independent of the countable collection $\{A_\alpha\}$, a corresponding result can be deduced for continuum filtration, indexed by a real variable, as a direct consequence via a limiting argument.

\section{In Lorentz spaces}

Here we apply the results of the previous section to establish a Christ-Kiselev maximal theorem in Lorentz spaces. It is interesting to observe how straightforward convexity conditions on the Lorentz weight function imply the upper $q$- and lower $p$-estimates needed to apply Theorem \ref{thmChK2}.

Given a $\sigma$-finite measure space $(\Omega, \mu)$, an index $r$ with $0<r<\infty$, and a nonnegative measurable, ``weight'' function $w$ on $(0,\infty)$, the Lorentz space $\Lambda_{r, w}= \Lambda_{r, w}(\Omega,\mu)$ is defined to be the set of all $f\in L^0_\mu $ such that
\[
\|f\|_{r,w}  = \bigg(\int_0^\infty (f^{*})^r w\bigg)^{1/r} <\infty.
\]
It is well known that $\Lambda_{r,w}$ is a rearrangement invariant Banach space if $1\le r<\infty$ and $w$ is nonincreasing. Also, $\Lambda_{r,w}$ is a rearrangement invariant quasi-Banach space if $0<r<\infty$ and for some $C>0$, $\int_0^{2t}w\le C\int_0^tw$ for $t>0$. (See \cite[Theorem A]{KM} for conditions under which $\Lambda_{r,w}$ can be re-normed to become a Banach space.)

In the case $0<r, p<\infty$ and $w(t) = \frac{r}{p} t^{\frac rp-1}$ for all $t>0$, $\Lambda_{r, w}$ coincides with the classical Lorentz space $L_{p, r}=L_{p, r}(\Omega, \mu)$. Note that if $r>p$, then this weight function is increasing and the triangle inequality fails for $\|\cdot\|_{r, w}$. 

The convexity and concavity of the classical Lorentz spaces $L_{r, p}$ over a non-atomic measure space with $1<r<\infty$, $1\le p<\infty$ were studied by Creekmore \cite{Cr}. A study of order convexity and concavity in the context of quasi-Banach Lorentz spaces $\Lambda_{r,w}$ was carried out in \cite{KM}; the results were stated for Lebesgue measurable functions only, but we will use their methods in the next lemma. See \cite[Theorems 3 and 7]{KM} in particular.

\begin{theorem}\label{Lorentz} Let $(\Omega, \mu)$ and $(\widetilde\Omega, \nu)$ be measure spaces. Suppose $0<r<s<\infty$ and $w$ and $v$ are weight functions. Set $W(t)=\int_0^tw$ and $V(t)=\int_0^tv$ for $t>0$. If $T: \Lambda_{r, w}(\Omega, \mu) \to \Lambda_{s, v}(\widetilde\Omega, \nu)$ is bounded and there exist $p$ and $q$ with $r\le p<q\le s$ such that $t\mapsto W(t^{p/r})$ is a convex function and  $t\mapsto V(t^{q/s})$ is a concave function, then $T^*$, the maximal operator based on $T$ associated to filtration, is bounded from $\Lambda_{r, w}(\Omega, \mu)$ to $\Lambda_{s, v}(\widetilde\Omega, \nu)$ and $\|T^{*}\| \leq C \|T\|$, where $C$ is a constant depending only on $p$, $q$ and the triangle inequality constant for $\Lambda_{s, v}$. If $\Lambda_{s, v}$ is a Banach space, then $C=\frac{2^{1/q}}{1-(2^{p/q}-1)^{1/p}}$.
\end{theorem}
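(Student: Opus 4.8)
The plan is to deduce the theorem from Theorem~\ref{thmChK2}, taking $E=\Lambda_{r,w}(\Omega,\mu)$, $F=\Lambda_{s,v}(\widetilde\Omega,\nu)$, and the $p,q$ supplied by the hypothesis (which satisfy $0<p<q\le s<\infty$). Everything reduces to the two sharp two-term estimates
\[
\ell_{(p),2}(\Lambda_{r,w})=1\qquad\text{and}\qquad u^{(q),2}(\Lambda_{s,v})=1.
\]
Granting these, $\ell u=1<(1+\kappa^{-\tau})^{1/\tau}$ holds for every $\kappa\ge1$, where $\kappa$ is the triangle constant of $\Lambda_{s,v}$ and $\frac1\tau=\frac1p-\frac1q$; so the quantitative hypothesis of Theorem~\ref{thmChK2} is met, and the constant $\gamma$ produced there depends only on $p$, $q$, $\kappa$ and reduces to $\frac{2^{1/q}}{1-(2^{p/q}-1)^{1/p}}$ when $\Lambda_{s,v}$ is a Banach space, so that $\kappa=1$. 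One checks separately that $\Lambda_{s,v}$ has the Fatou property: if $0\le g_j\uparrow g$ then $\mu_{g_j}\uparrow\mu_g$, hence $g_j^*\uparrow g^*$, and $\|g_j\|_{s,v}^s=\int_0^\infty(g_j^*)^sv\uparrow\int_0^\infty(g^*)^sv=\|g\|_{s,v}^s$ by monotone convergence. That $\Lambda_{s,v}$ is a quasi-Banach function lattice at all is forced by the concavity hypothesis, since a concave function vanishing at $0$ is subadditive, which gives a doubling condition on $V$; for $\Lambda_{r,w}$ we take this as given, as is implicit in assuming $T$ bounded.

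Both estimates rest on the layer-cake identity, valid over any $\sigma$-finite measure space,
\[
\|f\|_{r,w}^r=\int_0^\infty(f^*)^rw=\int_0^\infty W(\mu_f(\sigma))\,d(\sigma^r),
\]
together with $\mu_{f_1+f_2}=\mu_{f_1}+\mu_{f_2}$ for disjointly supported $f_1,f_2$. For the lower estimate, set $\rho=p/r\ge1$ and $\Psi(t)=W(t^\rho)$, which is convex with $\Psi(0)=0$, so $t\mapsto\Psi(t)/t$ is nondecreasing. Writing $a_i(\sigma)=\mu_{f_i}(\sigma)^{1/\rho}$, the identity gives $\|f_i\|_{r,w}^r=\int_0^\infty\Psi(a_i)\,d(\sigma^r)$ and, for disjoint $f_1,f_2$, $\|f_1+f_2\|_{r,w}^r=\int_0^\infty\Psi\big((a_1^\rho+a_2^\rho)^{1/\rho}\big)\,d(\sigma^r)$. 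Monotonicity of $\Psi(t)/t$ yields the pointwise inequality $\Psi\big((a^\rho+b^\rho)^{1/\rho}\big)\ge\big(\Psi(a)^\rho+\Psi(b)^\rho\big)^{1/\rho}$ for $a,b\ge0$: with $c=(a^\rho+b^\rho)^{1/\rho}\ge a,b$ one has $\Psi(a)^\rho+\Psi(b)^\rho\le(a^\rho+b^\rho)(\Psi(c)/c)^\rho=\Psi(c)^\rho$. Plugging this into the integral and invoking Minkowski's integral inequality in $L^\rho(d(\sigma^r))$ (legitimate since $\rho\ge1$) produces $\|f_1+f_2\|_{r,w}^r\ge(\|f_1\|_{r,w}^p+\|f_2\|_{r,w}^p)^{1/\rho}$, hence $\|f_1+f_2\|_{r,w}^p\ge\|f_1\|_{r,w}^p+\|f_2\|_{r,w}^p$ and $\ell_{(p),2}(\Lambda_{r,w})\le1$; the reverse inequality is trivial.

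The upper estimate is the mirror image, using the analogous identity for $\Lambda_{s,v}$. Set $\nu=s/q\ge1$ and $\Phi(t)=V(t^{1/\nu})=V(t^{q/s})$, concave with $\Phi(0)=0$, so $t\mapsto\Phi(t)/t$ is nonincreasing. Writing $b_i(\sigma)=\mu_{g_i}(\sigma)$, one gets $\|g_i\|_{s,v}^s=\int_0^\infty\Phi(b_i^\nu)\,d(\sigma^s)$ and, for disjoint $g_1,g_2$, $\|g_1+g_2\|_{s,v}^s=\int_0^\infty\Phi\big((b_1+b_2)^\nu\big)\,d(\sigma^s)$. Since $\Phi(t)/t$ is nonincreasing, the same computation gives the reversed pointwise inequality $\Phi\big((b_1+b_2)^\nu\big)\le\big(\Phi(b_1^\nu)^{1/\nu}+\Phi(b_2^\nu)^{1/\nu}\big)^\nu$, and inserting it together with the reverse form of Minkowski's integral inequality, valid for the exponent $1/\nu\le1$, yields $\|g_1+g_2\|_{s,v}^s\le(\|g_1\|_{s,v}^q+\|g_2\|_{s,v}^q)^{s/q}$, hence $u^{(q),2}(\Lambda_{s,v})\le1$. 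With both two-term estimates in hand, Theorem~\ref{thmChK2} gives the conclusion with the stated constant.

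The main obstacle is exactly this translation of the convexity of $W(t^{p/r})$ and concavity of $V(t^{q/s})$ into the disjointly-supported two-term estimates with constant precisely $1$. The naive attempt---using that a convex function vanishing at $0$ is superadditive, applied directly to $W(\mu_{f_1}+\mu_{f_2})$---leads to an inequality in the wrong direction; the workable device is the two-variable pointwise inequality above, which exploits only the monotonicity of $\Psi(t)/t$ (resp.\ $\Phi(t)/t$), combined with a direct (resp.\ reverse) Minkowski inequality against $d(\sigma^r)$ (resp.\ $d(\sigma^s)$). This is where the methods of \cite{KM} are used; verifying that those arguments persist when Lebesgue measure is replaced by an arbitrary $\sigma$-finite measure, and that the sharp two-term constants pass cleanly through Theorem~\ref{thmChK2}, is routine.
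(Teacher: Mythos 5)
Your proof is correct and follows essentially the same route as the paper's. In both, the key step is the layer-cake identity $\|f\|_{r,w}^r=\int_0^\infty W(\mu_f(y))\,d(y^r)$ combined with the additivity of distribution functions under disjoint supports, the pointwise superadditivity of $W(\cdot)^{p/r}$ (equivalently the monotonicity of $W(t)^{p/r}/t$, which you express via $\Psi(t)/t$), and Minkowski's integral inequality with exponent $p/r\ge1$; the upper $q$-estimate mirrors this with concavity and the reverse Minkowski inequality. Minor stylistic differences only: you confine yourself to the two-term constants $\ell_{(p),2}$ and $u^{(q),2}$ (which is all Theorem~\ref{thmChK2} needs), whereas the paper proves the stronger $n$-term statement $\ell_{(p)}(\Lambda_{r,w})=u^{(q)}(\Lambda_{s,v})=1$; and you spell out the reverse-Minkowski step for the upper estimate, which the paper dispatches with ``a similar argument.''
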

\begin{proof} Since $W(t^{p/r})$ is a convex function of $t\ge0$ and $W(0)=0$, $W(t^{p/r})/t$ is nondecreasing in $t$ and so are $W(t)/t^{r/p}$ and $W(t)^{p/r}/t$. Therefore, if $t=t_1+\dots+t_n$ is a sum of nonnegative numbers, then
\[
W(t)=\Big(\sum_{j=1}^n(t_j/t)W(t)^{p/r}\Big)^{r/p}\ge \Big(\sum_{j=1}^nW(t_j)^{p/r}\Big)^{r/p}.
\]
Since $f^*$ and $f$ are equimeasurable, if $y>0$, then $\{t>0:f^*(t)>y\}=\{t>0:\mu_f(y)>t\}=(0,\mu_f(y))$. Using the Fubini theorem we see that for all $f\in \Lambda_{r,w}$, 
\begin{align*}
\|f\|_{r,w}^r &= \int_o^\infty\int_0^{f^*(t)}\,d(y^r) w(t)\,dt
\\&= \int_0^\infty\int_{\{t>0:f^*(t)>y\}}w(t)\,dt\,d(y^r) =\int_0^\infty W(\mu_f(y))\,d(y^r). 
\end{align*}
Now, if $f_1,\dots, f_j$ are disjointly supported functions that sum to $f$, then $\mu_f=\mu_{f_1}+\dots+\mu_{f_n}$ so, following the above observations with Minkowski's inequality, we get 
\begin{align*}
\|f\|_{r,w}&\ge \bigg(\int_0^\infty \Big(\sum_{j=1}^nW(\mu_{f_j}(y))^{p/r}\Big)^{r/p}\,d(y^r)\bigg)^{1/r}\\
&\ge \bigg(\sum_{j=1}^n\bigg(\int_0^\infty W(\mu_{f_j}(y))\,d(y^r)\bigg)^{p/r}\bigg)^{1/p}
=\Big(\sum_{j=1}^n \|f_j\|_{r,w}^p\Big)^{1/p}.
\end{align*}
This shows that $\Lambda_{r, w}$ has a lower $p$-estimate and $\ell_{(p)}(\Lambda_{r, w})=1$. 

A similar argument shows that $\Lambda_{s, v}$ has an upper $q$-estimate and that $u^{(q)}(\Lambda_{s, v})=1$. It follows that  $\ell_{(p),2}(\Lambda_{r, w})=u^{(q),2}(\Lambda_{s, v})=1$ so Theorem \ref{thmChK2} proves the boundedness of $T^*$ and gives a formula for the constant $C$ that has the stated properties.
\end{proof}
 An examination of the proof reveals that we may replace the convexity and concavity assumptions in the previous result by the following weaker hypotheses: For all $t_1,t_2\ge0$,
\[
W(t_1+t_2)^{p/r}\ge W(t_1)^{p/r}+W(t_2)^{p/r}
\ {and}\ 
V(t_1+t_2)^{q/s}\le V(t_1)^{q/s}+W(t_2)^{q/s}.
\]

A Christ-Kiselev maximal theorem was given in \cite[Theorem 1.2]{CKL} for classical Lorentz spaces with indices in $[1,\infty)$. As a consequence of Theorem \ref{Lorentz}, we obtain a result for the classical Lorentz spaces with indices in $(0,\infty)$. It also improves the constant. See Remark \ref{const}.
\begin{corollary} \label{ChKLorentz}
Fix indices satisfying $0<p\le r <s \le q<\infty$ and let $T: L_{p,r}(\Omega, \mu) \to L_{q, s}(\widetilde\Omega, \nu)$ be a bounded linear operator between classical Lorentz spaces. Then $T^{*}$, the maximal operator based on $T$ associated to a filtration, is bounded from $L_{p,r}(\Omega, \mu)$ to $L_{q, s}(\widetilde\Omega, \nu)$ with $\|T^{*}\| \le C\|T\|$, where $C$ is a constant depending only on $p$, $q$ and $s$. If $s\ge1$, we may take $C=\frac{2^{1/q}}{1-(2^{p/q}-1)^{1/p}}$.
\end{corollary}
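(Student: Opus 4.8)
The plan is to deduce Corollary~\ref{ChKLorentz} directly from Theorem~\ref{Lorentz} by realizing the classical Lorentz spaces as particular instances of the weighted Lorentz spaces $\Lambda_{r,w}$ and then verifying the convexity and concavity hypotheses of that theorem. First I would record the identifications: $L_{p,r}(\Omega,\mu)=\Lambda_{r,w}(\Omega,\mu)$ with $w(t)=\frac{r}{p}t^{r/p-1}$, so that $W(t)=\int_0^t w=t^{r/p}$; likewise $L_{q,s}(\widetilde\Omega,\nu)=\Lambda_{s,v}(\widetilde\Omega,\nu)$ with $v(t)=\frac{s}{q}t^{s/q-1}$ and $V(t)=t^{s/q}$. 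With the notation of Theorem~\ref{Lorentz}, the roles of its indices $r,s$ are played by our $r,s$ (the ``inner'' exponents), and its auxiliary indices $p,q$ are played by our $p,q$ (the ``outer'' exponents), so the standing requirement $0<r<s<\infty$ and $r\le p<q\le s$ of Theorem~\ref{Lorentz} is exactly our hypothesis $0<p\le r<s\le q<\infty$ — note the reversal of which symbols are inner and which are outer, so I would be careful to spell this out to avoid confusion.

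Next I would check the two key structural conditions. The map $t\mapsto W(t^{p/r})$ becomes $t\mapsto (t^{p/r})^{r/p}=t$, which is (affine, hence) convex; the map $t\mapsto V(t^{q/s})$ becomes $t\mapsto (t^{q/s})^{s/q}=t$, which is (affine, hence) concave. Thus both hypotheses of Theorem~\ref{Lorentz} hold trivially — in fact with equality throughout — and Theorem~\ref{Lorentz} applies with our $p$ as its lower index and our $q$ as its upper index. It yields the boundedness of $T^*$ from $L_{p,r}$ to $L_{q,s}$ with $\|T^*\|\le C\|T\|$, where $C$ depends only on $p$, $q$, and the triangle inequality constant of $L_{q,s}=\Lambda_{s,v}$. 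Since that triangle inequality constant is itself a function of the quasi-norm exponent $s$ (for instance $\max\{1,2^{1/s-1}\}$ in the standard realization), $C$ depends only on $p$, $q$, and $s$, as claimed. When $s\ge1$ the space $L_{q,s}$ is a genuine Banach function lattice, so the Banach-space branch of Theorem~\ref{Lorentz} gives the explicit value $C=\frac{2^{1/q}}{1-(2^{p/q}-1)^{1/p}}$, completing the proof.

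There is essentially no analytic obstacle here: the only thing to watch is the bookkeeping of indices, since Theorem~\ref{Lorentz} is stated with its inner exponents called $r<s$ and its outer exponents called $p<q$, whereas the classical-Lorentz convention writes $L_{p,r}$ with $p$ the outer (Lebesgue) exponent and $r$ the inner one. One must match our $(p,r)$ to Theorem~\ref{Lorentz}'s $(p,r)$ with the roles of which is larger swapped appropriately — concretely, Theorem~\ref{Lorentz}'s hypothesis ``$r\le p$'' corresponds to our ``$p\le r$'' because its $r$ is our inner exponent and its $p$ is our outer exponent. Once that is laid out carefully, the verification that $W(t^{p/r})$ and $V(t^{q/s})$ are linear is immediate, and the constant statement is read off from the two cases of Theorem~\ref{Lorentz}.
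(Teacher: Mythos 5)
Your proposal is correct and follows exactly the paper's argument: identify $L_{p,r}=\Lambda_{r,w}$ and $L_{q,s}=\Lambda_{s,v}$ with $W(t)=t^{r/p}$, $V(t)=t^{s/q}$, note that $W(t^{p/r})=t=V(t^{q/s})$ are affine, and invoke Theorem~\ref{Lorentz}. One small imprecision worth noting: the triangle-inequality constant of $L_{q,s}$ is not just $\max\{1,2^{1/s-1}\}$ but also picks up a factor $2^{1/q}$ from the doubling of $W(t)=t^{s/q}$ in the standard estimate $(f+g)^*(2t)\le f^*(t)+g^*(t)$; this is why the paper says it depends on both $q$ and $s$, though of course your final claim that $C$ depends only on $p$, $q$, $s$ is still correct.
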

\begin{proof} With $\Lambda_{r,w}=L_{p,r}$ and $\Lambda_{s,v}=L_{q,s}$ we have $W(t)=t^{r/p}$ and $V(t)=t^{s/q}$. Thus $W(t^{p/r})=t=V(t^{q/s})$ so the convexity and concavity conditions hold trivially and we get $\|T^{*}\| \le C\|T\|$ for some $C$ depending on $p$, $q$ and the constant in the triangle inequality for $L_{q,s}$. The latter depends only on $q$ and $s$. If $s\ge1$ then $L_{q,s}$ is a Banach space, because $s\le q$.
\end{proof}

\subsection{Application: The Fourier Transform in Lorentz spaces}

Easy-to-verify weight conditions are available that imply boundedness of the Fourier transform from one Lorentz space to another. These can be combined with the Christ-Kiselev maximal inequality given above. We illustrate this with a simple special case. 

Here the measure spaces $(\Omega,\mu)$ and $(\widetilde\Omega, \nu)$ are both just $\mathbb R^n$ with Lebesgue measure.

\begin{corollary}\label{FTLor} Let $1<r<s<\infty$ with $s\ge2$ and let $v$ and $w$ be nonincreasing weights on $(0,\infty)$ such that
\[
\sup_{z>0}zV(1/z)^{1/r}W(z)^{-1/s}<\infty.
\]
Here $W(t)=\int_0^tw$ and $V(t)=\int_0^tv$ for $t>0$.

If there exists an index $p$, with $r<p<s$, such that $t\mapsto W(t^{p/r})$ is convex, then $\mathcal F^*$, the maximal operator based on the Fourier transform and associated to a filtration of Lebesgue measurable subsets of $\mathbb R^n$, is bounded from $\Lambda_{r,w}$ to $\Lambda_{s,v}$. 
\end{corollary}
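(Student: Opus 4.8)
The plan is to reduce the statement to Theorem~\ref{Lorentz} once the plain boundedness of the Fourier transform $\mathcal F\colon\Lambda_{r,w}\to\Lambda_{s,v}$ has been secured. So the proof splits into two steps: first show the displayed weight condition gives $\|\widehat f\|_{s,v}\le C\|f\|_{r,w}$, and then apply Theorem~\ref{Lorentz} with $T=\mathcal F$, the given index $p$, and $q=s$.

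For the first step I would appeal to the standard rearrangement estimate for the Fourier transform on $\mathbb R^n$. Writing $f=f\chi_{\{|f|>f^*(1/t)\}}+f\chi_{\{|f|\le f^*(1/t)\}}$ and estimating the first term by the Plancherel bound $\mathcal F\colon L^2\to L^2$ and the second by the trivial bound $\mathcal F\colon L^1\to L^\infty$ gives a pointwise majorant of $\widehat f^{\,*}$ by a Hardy-type average of $f^*$. The condition $\sup_{z>0}zV(1/z)^{1/r}W(z)^{-1/s}<\infty$ is precisely the (Muckenhoupt-type) balance under which this majorant, combined with a weighted Hardy inequality for monotone functions, yields $\|\widehat f\|_{s,v}\le C\|f\|_{r,w}$; it also guarantees that $\mathcal F$ is well defined on $\Lambda_{r,w}$. (When $w\equiv v\equiv1$ the condition collapses to $1/r+1/s=1$, i.e.\ the Hausdorff--Young exponents, which is the right sanity check.) I would cite this Fourier--Lorentz inequality as known rather than reproduce its proof here.

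For the second step, put $q=s$. Then $r<p<q=s$, so the index requirement $r\le p<q\le s$ of Theorem~\ref{Lorentz} holds. By hypothesis $t\mapsto W(t^{p/r})$ is convex. Since $v$ is nonincreasing, its antiderivative $V$ is concave, hence $t\mapsto V(t^{q/s})=V(t)$ is concave. Thus all hypotheses of Theorem~\ref{Lorentz} are met with $T=\mathcal F$, and it gives $\|\mathcal F^{*}\|\le C\|\mathcal F\|<\infty$, i.e.\ $\mathcal F^{*}\colon\Lambda_{r,w}\to\Lambda_{s,v}$ is bounded. (As $s\ge2$ and $v$ is nonincreasing, $\Lambda_{s,v}$ is a Banach space, so one may in fact take $C=2^{1/s}\bigl(1-(2^{p/s}-1)^{1/p}\bigr)^{-1}$, which is finite because $p<s$.)

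The only real work is in the first step; the second is immediate. The point to be careful about there is that the Fourier--Lorentz inequality one invokes must be available for arbitrary (not merely Lebesgue measurable) inputs on $\mathbb R^n$ — here no restriction, since domain and range are $\mathbb R^n$ with Lebesgue measure — and that the exponents and decreasing-weight hypotheses in the cited inequality match those assumed here. In the application of Theorem~\ref{Lorentz}, the convenient choice $q=s$ is exactly what turns the concavity requirement into the trivial fact that the antiderivative of a nonincreasing function is concave, so no further condition on $v$ beyond monotonicity is needed.
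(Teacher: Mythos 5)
Your proposal is correct and follows essentially the same route as the paper: establish the Fourier--Lorentz norm inequality $\mathcal F\colon\Lambda_{r,w}\to\Lambda_{s,v}$ (the paper simply cites Benedetto--Heinig for this, which your sketch of the rearrangement/Hardy argument accurately summarizes before you likewise defer to citation), then apply Theorem~\ref{Lorentz} with $q=s$, noting that $V(t^{q/s})=V(t)$ is concave because $v$ is nonincreasing. Your sanity check at $w\equiv v\equiv1$ and the observation that $p<s$ keeps the constant finite are both correct; no gap.
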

\begin{proof} From \cite[Definition 1 and Theorem 2]{BH}, the above conditions on $r$, $s$, $v$, and $w$ ensure that the Fourier transform is bounded from $\Lambda_{r,w}$ to $\Lambda_{s,v}$. Choose $p$ as in the statement. With $q=s$, $t\mapsto V(t^{q/s})$ is concave because $v$ is nonincreasing. Theorem \ref{Lorentz} now gives the result. 
\end{proof}

This result is not vacuous; the case $v=w=1$, $s=r'$ recovers the Christ-Kiselev maximal theorem applied to the Hausdorff-Young inequality.

\subsection{Application: Consequences of a result of Sledd and Stegenga}

A deep result of Sledd and Stegenga, in \cite{SlSt}, characterizes the measures on $\mathbb R^n\setminus\{0\}$ with respect to which the Fourier transform of every $H^1$ function is integrable. Here $H^1=H^1(\mathbb R^n)$ is the real variable Hardy space of \cite{FeSt}. With this as our starting point, we employ interpolation to produce a class of Fourier inequalities in weighted Lebesgue spaces, a class of Fourier inequalities between classical Lorentz spaces involving absolutely continuous measures on $\mathbb R^n$, and a subset of the latter for which the Fourier inequality can be strengthened to an inequality for any maximal operator based on the Fourier transform and associated to a filtration.

For notational convenience, if $u$ is a positive Lebesgue measurable function on $\mathbb R^n$ we let $L^p(u)=L^p_\mu$ and $L_{p,r}(u)=L_{p,r}(\mathbb R^n,\mu)$, where the measure $\mu$ is defined by $\mu(E)=\int_Eu$ on every Lebesgue measurable set $E$.

\begin{theorem}\label{SS} Let $w$ be a positive, Lebesgue measurable function on $\mathbb R^n$ and let $w_t(x)=t^nw(x/t)$ for all $t>0$. Suppose that
$$
\sup_{t>0}\sum_{k\in\mathbb Z^n\setminus\{0\}}\Big(\int_{k+[-1/2,1/2]^n}w_t\Big)^2<\infty.
$$
Then $\mathcal F:H^1\to L^1(w)$ is bounded and, if $1<p<2$, then
$$
\mathcal F:L^p\to L^p(w^{2-p})\quad\text{is bounded}.
$$
If $1<p<q<p'$ and  $1\le r\le s\le\infty$, then 
$$
\mathcal F:L_{p,r}\to L_{q,s}(w^{(p'-q)/p'})\quad\text{is bounded}. 
$$
Finally, if $1<p\le r< s\le q<p'$, then for any maximal operator $\mathcal F^*$ based on $\mathcal F$ and associated to a filtration,
$$
\mathcal F^*:L_{p,r}\to L_{q,s}(w^{(p'-q)/p'})\quad\text{is bounded.} 
$$

\end{theorem}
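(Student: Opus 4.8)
The plan is to obtain the final assertion as a formal consequence of the third assertion of the theorem, which is already proved, together with Corollary~\ref{ChKLorentz}. First I would check that the index hypothesis $1<p\le r<s\le q<p'$ of the last assertion implies the index hypothesis $1<p<q<p'$ and $1\le r\le s\le\infty$ of the third assertion: from $p\le r<s\le q$ we get $p<q$; from $1<p\le r$ we get $1\le r$; $r<s$ gives $r\le s$; and $s\le q<p'<\infty$ gives $s\le\infty$. Hence the bounded linear operator $\mathcal F\colon L_{p,r}\to L_{q,s}(w^{(p'-q)/p'})$ is already at hand, where $L_{p,r}=L_{p,r}(\mathbb R^n,dx)$ and $L_{q,s}(w^{(p'-q)/p'})=L_{q,s}(\mathbb R^n,\mu)$ with $d\mu=w^{(p'-q)/p'}\,dx$; note that the running hypothesis $\sup_{t>0}\sum_k(\int_{k+[-1/2,1/2]^n}w_t)^2<\infty$ is not used again here — the fourth conclusion is a pure corollary of the third.

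Next I would verify that Corollary~\ref{ChKLorentz} applies to $T=\mathcal F$ with these spaces. Its index requirement $0<p\le r<s\le q<\infty$ holds because $1<p\le r<s\le q<p'<\infty$. One must also confirm that $L_{q,s}(w^{(p'-q)/p'})$ is genuinely a classical Lorentz space over a $\sigma$-finite measure space, as required by the theory of the previous section: since $w>0$ is Lebesgue measurable and finite, $w^{(p'-q)/p'}>0$ is Lebesgue measurable, and $\mathbb R^n$ is the increasing union of the sets $\{x\colon |x|\le k,\ w(x)\le k\}$, each of finite $\mu$-measure, so $\mu$ is $\sigma$-finite. Corollary~\ref{ChKLorentz} then yields at once that $\mathcal F^*$, the maximal operator based on $\mathcal F$ and associated to any filtration, is bounded from $L_{p,r}$ to $L_{q,s}(w^{(p'-q)/p'})$, with $\|\mathcal F^*\|\le C\|\mathcal F\|$ for a constant $C$ depending only on $p$, $q$, $s$; since $s>p>1$ one may take $C=\frac{2^{1/q}}{1-(2^{p/q}-1)^{1/p}}$.

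Since the argument is entirely formal, I expect no genuine obstacle; the only point requiring care is the bookkeeping above — matching the index ranges and confirming that the weighted measure $\mu$ is $\sigma$-finite, so that Corollary~\ref{ChKLorentz} (and the Lorentz-space machinery it rests on, developed for $\sigma$-finite measure spaces) is legitimately applicable. As an alternative one could bypass the corollary and invoke Theorem~\ref{Lorentz} directly: writing $L_{p,r}$ and $L_{q,s}$ in the form $\Lambda_{r,\cdot}$ and $\Lambda_{s,\cdot}$, their defining primitives $U(t)=t^{r/p}$ and $Z(t)=t^{s/q}$ satisfy $U(t^{p/r})=t$ and $Z(t^{q/s})=t$, so the convexity and concavity conditions hold trivially; but routing through Corollary~\ref{ChKLorentz}, which already packages this computation, is the cleanest route.
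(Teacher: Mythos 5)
Your argument is correct and matches the paper's proof exactly: the paper also deduces the final assertion by applying Corollary~\ref{ChKLorentz} to the Lorentz-space bound established in the third assertion, simply remarking that one need only check the index restrictions. Your write-up carries out the index bookkeeping and the $\sigma$-finiteness check explicitly, which the paper leaves to the reader, but the route is the same.
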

\begin{proof}
Theorem 10.4 of \cite{FS} gives a special case of \cite{SlSt} in which the Borel measure is replaced by weighted Lebesgue measure with weight $w$, showing that for all $f\in H^1$,
$$
\int_{\mathbb R^n}|\hat f|w<\infty.
$$
The closed graph theorem shows that $\mathcal F:H^1\to L^1(w)$ is bounded. 

Since $\mathcal F:L^2\to L^2$ is bounded, we may employ complex interpolation to get the boundedness of $\mathcal F:[H^1,L^2]_{2/p'}\to[L^1(w),L^2]_{2/p'}$. In the comments after \cite[Theorem 1]{JaJo}, we see that $[H^1,L^2]_{2/p'}=L^p$. Since $w$ is positive, $L^1(w)$ is a space of Lebesgue measurable functions. Thus, \cite[Theorem 5.5.3]{BL} shows $[L^1(w),L^2]_{2/p'}=L^p(w^{2-p})$. Thus $\mathcal F:L^p\to L^p(w^{2-p})$ is bounded.

For the Lorentz space results, we take $p$ and $q$ with $1<p<q<p'$, set $\theta=\frac1{p'}+\frac1q$, and observe that $0<\theta<1<\theta q<2$. Applying the Lebesgue space result just proved, with $p$ replaced by $\theta q$, we find that 
$$
\mathcal F:L^{\theta q}\to L^{\theta q}(w^{(p'-q)/p'}).
$$
Since $w$ is positive, $L^\infty=L^{\infty}(w^{(p'-q)/p'})$ with identical norms. Thus,
$$
\mathcal F:L^1\to L^{\infty}(w^{(p'-q)/p'}).
$$
The choice of $\theta$ makes $\frac{1-\theta}1+\frac\theta{\theta q}=\frac1p$ and $\frac{1-\theta}\infty+\frac\theta{\theta q}=\frac1q$, so two applications of \cite[Theorem 5.5.1]{BL} show that
$$
\mathcal F:L_{p,r}\to L_{q,r}(w^{(p'-q)/p'}).
$$
But $r\le s$ so $L_{q,r}(w^{(p'-q)/p'})\hookrightarrow L_{q,s}(w^{(p'-q)/p'})$, which gives the result.

The boundedness of the maximal Fourier transform is a direct application of Corollary \ref{ChKLorentz} to the last result; one simply ensures that all restrictions on the indices are met.
\end{proof}

\section{In Weiner amalgams}

In investigations of boundedness of the Fourier transform between Banach function lattices, the Wiener amalgam spaces arose naturally as appropriate domains, providing necessary conditions for Fourier estimates and pushing the theory beyond the scale of Lebegue spaces nearly a century ago. Their continued development is related in \cite{F} but we will not require the generality discussed there. Let us recall the definition of the amalgam of a Lebesgue function space and a Lebesgue sequence space. Given $1 \le r, s< \infty$ the Wiener amalgam space $W_n(L^r, \ell^s)$ is the Banach function lattice over $\mathbb R^n$ equipped with the norm
\[
\|f\|_{W_n(L^r, \ell^s)}  = \bigg(\sum_{k\in \mathbb{Z}^n} \bigg(\int_{Q_k} |f|^r\bigg)^{s/r}\bigg)^{1/s}.
\]
Here $Q_k$ denotes the translate by $k\in \mathbb{Z}^n$ of the unit cube $[0, 1)^n$. The usual modifications are made to define $W_n(L^r, \ell^s)$ in case $r$ or $s$ is infinite. Evidently, $W_n(L^r, \ell^r)=L^r$ with identical norms. Observe that the space decreases as $r$ increases
and increases as $s$ increases. Wiener studied the spaces $W_1(L^1, \ell^2)$ and $W_1(L^2, \ell^{\infty})$ in \cite{W1} and the spaces $W_1(L^{\infty}, \ell^2)$ and $W_1(L^1, \ell^{\infty})$ in \cite{W2}. 

For a detailed discussion of Wiener amalgams of Lebesgue function and sequence spaces (over locally compact abelian groups) we refer to Fournier and Stewart's paper \cite{FS}. The amalgam version of the Hausdorff-Young inequality, stated in their Theorem 2.18, asserts that if $1 \le r \le 2$ and $1 \le s\le 2$, then
\[
\|\mathcal F\|_{W_n(L^r, \ell^s)\to W_n(L^{s'}, \ell^{r'})}<\infty.
\]
The largest of the spaces $(L^r, \ell^s)$ for $1\le r, s\le 2$ is $W_n(L^1, \ell^2)$. It is a remarkable result proved by Szeptycki in \cite{Sz} that this is the largest Banach function lattice over $\mathbb{R}^n$ that is mapped into a space of measurable functions by the Fourier transform.

\begin{lemma}\label{ulamalg} Let $p,q,r,s\in(0,\infty]$. Then
\[
\ell_{(p),2}(W_n(L^r,\ell^s))=2^{\frac1p-\frac1{\max\{p,r,s\}}}
\text{ and } 
u=u^{(q),2}(W_n(L^{r},\ell^{s}))=2^{\frac1{\min\{q,r,s\}}-\frac1q}.
\]
\end{lemma}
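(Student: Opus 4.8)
The plan is to compute the two constants separately but by the same method, reducing each to an optimization over pairs of disjointly supported functions and then exploiting the iterated $\ell^s$-of-$\ell^r$ structure. I will write $W=W_n(L^r,\ell^s)$. For the lower $2$-estimate constant, I must find the least $C$ such that $(\|g_1\|_W^p+\|g_2\|_W^p)^{1/p}\le C\|g_1+g_2\|_W$ for all disjointly supported $g_1,g_2\in W$, and show it equals $2^{1/p-1/\max\{p,r,s\}}$. For the upper $2$-estimate constant I must find the least $C$ such that $\|g_1+g_2\|_W\le C(\|g_1\|_W^q+\|g_2\|_W^q)^{1/q}$ for disjoint $g_1,g_2$, and show it equals $2^{1/\min\{q,r,s\}-1/q}$.

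First I would establish the analogous one-variable facts for the building-block spaces: for a scalar Lebesgue space $L^r$ the two-term constants are $\ell_{(p),2}(L^r)=2^{(1/p-1/r)_+}$ and $u^{(q),2}(L^r)=2^{(1/r-1/q)_+}$, and for a sequence space $\ell^s$ similarly with $r$ replaced by $s$; these follow by testing on two equal-norm pieces (giving the lower bound on $C$) and a direct application of the elementary inequality $(a+b)^{1/t}\le \max\{1,2^{1/t-1}\}(a^{1/t}+b^{1/t})^{\phantom{0}}$ together with its reverse (giving the upper bound). Next, for the amalgam itself: given disjoint $g_1,g_2$, on each cube $Q_k$ the $L^r$-norms $a_{k,i}=\big(\int_{Q_k}|g_i|^r\big)^{1/r}$ satisfy a two-term $L^r$-type relation because $g_1,g_2$ are disjoint on $Q_k$, and then the vector $(a_k)_k$ with $a_k=\big(a_{k,1}^r+a_{k,2}^r\big)^{1/r}$ versus $(a_{k,i})_k$ satisfies a two-term $\ell^s$-type relation. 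Composing these two steps—first the per-cube $L^r$ comparison, then the across-cubes $\ell^s$ comparison, then comparing the resulting $\ell^p$ (or $\ell^q$) sums—yields the upper bound $C\le 2^{1/p-1/\max\{p,r,s\}}$ for the lower estimate and $C\le 2^{1/\min\{q,r,s\}-1/q}$ for the upper estimate, after checking that the product of the relevant exponents telescopes to the claimed value (here one uses that $\max\{p,r,s\}$ and $\min\{q,r,s\}$ absorb the intermediate indices correctly).

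For the matching lower bounds on $C$, I would exhibit explicit extremal configurations. To force $2^{1/p-1/\max\{p,r,s\}}$ from below, one picks $g_1,g_2$ according to which of $p,r,s$ is largest: if $p$ dominates, take $g_1,g_2$ with equal $W$-norms (two translated copies in far-apart cubes if $s\ge r$, or two disjoint pieces within one cube if $r\ge s$), so that $\|g_1+g_2\|_W=2^{1/\max\{r,s\}}\|g_1\|_W$ while the left side is $2^{1/p}\|g_1\|_W$; if instead $r$ or $s$ dominates, a similar two-piece test inside a single cube (resp. across two cubes) realizes the exponent. The upper-estimate lower bound is handled symmetrically with the roles of large/small reversed.

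The main obstacle I anticipate is bookkeeping the exponents in the composition step: the amalgam norm is an $\ell^s$-norm of $L^r$-norms, and a two-term lower (resp. upper) estimate with the optimal constant must be tracked through both layers without loss, which requires correctly identifying that the binary constant for the composite is governed by $\max$ (resp. $\min$) of the three indices $p,r,s$ (resp. $q,r,s$) rather than some product. I would resolve this by treating the three cases $p\le\min\{r,s\}$, $\min\{r,s\}\le p\le\max\{r,s\}$, and $p\ge\max\{r,s\}$ (and symmetrically for $q$) separately, in each case checking that the two elementary one-layer inequalities multiply to exactly $2^{1/p-1/\max\{p,r,s\}}$, and confirming the lower bound with the correspondingly chosen extremal pair. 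The degenerate cases where some index equals $\infty$ are handled by the usual replacement of the power-sum by a maximum throughout.
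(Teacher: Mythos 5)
Your reduction is the same as the paper's: for disjoint $g_0,g_1$, the per-cube identity $\int_{Q_k}|g_0|^r+\int_{Q_k}|g_1|^r=\int_{Q_k}|g_0+g_1|^r$ converts the two-term lower (resp.\ upper) estimate into the problem of finding the best constant $C$ in the mixed-norm inclusion
$\big\|\,\|a\|_{\ell^s(\mathbb Z^n)}\big\|_{\ell^p(\mathbb Z_2)}\le C\big\|\,\|a\|_{\ell^r(\mathbb Z_2)}\big\|_{\ell^s(\mathbb Z^n)}$
over nonnegative $a=(a_{i,k})$, and this is exactly the double sequence the paper constructs. Where you diverge is in how that constant is evaluated: the paper simply invokes \cite[Theorem~4.1]{GS}, a sharp inclusion theorem for mixed-norm spaces, and is done; you propose to reprove that constant from scratch by case analysis. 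Your plan is workable, but the phrase ``the vector $(a_k)_k$ versus $(a_{k,i})_k$ satisfies a two-term $\ell^s$-type relation'' undersells what is actually needed. The two elementary tools are the norm comparison $\|y\|_{\ell^p(\mathbb Z_2)}\le 2^{1/p-1/t}\|y\|_{\ell^t(\mathbb Z_2)}$ for $p\le t$ on the two-point index set, and, crucially, Minkowski's inequality for mixed norms, $\|\,\|a\|_{\ell^s(\mathbb Z^n)}\|_{\ell^r(\mathbb Z_2)}\le\|\,\|a\|_{\ell^r(\mathbb Z_2)}\|_{\ell^s(\mathbb Z^n)}$ when $s\le r$, which is what lets you swap the order of the iterated norms in the cases $s<r$ (and $s>r$ for the upper estimate). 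With those two tools, each of your three cases goes through and the exponents telescope as you predict; the lower bound from extremal configurations (mass concentrated on a single cube for the $L^r$ direction, on two separate cubes for the $\ell^s$ direction, and on a single pair for the trivial bound) matches. So your route is correct and more self-contained; the paper's route buys brevity by outsourcing the sharp mixed-norm constant to a prior theorem of the same authors. You should also be a bit careful with the claim that each one-layer comparison is itself a ``two-term $L^r$ relation'': at the per-cube level the relation is an exact identity (not an inequality with a constant), and at the across-cubes level the two sequences $(a_{0,k})_k$ and $(a_{1,k})_k$ generally do not have disjoint supports, so the statement needs to be the mixed-norm inclusion rather than a disjoint-support estimate.
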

\begin{proof} Let $\mathbb Z_2=\{0,1\}$. Suppose the functions $f_0,f_1\in W_n(L^r,\ell^s)$ are disjointly supported and define $a=(a_{i,k})_{i\in\mathbb Z_2;k\in\mathbb Z^n}$ by $a_{i,k}^r=\int_{Q_k}|f_i|^r$. Disjointness shows that $a_{0,k}^r+a_{1,k}^r=\int_{Q_k}|f_0+f_1|^r$. For any constant $C$, the inequality 
\[
\big(\|f_0\|_{W_n(L^r,\ell^s)}^p+\|f_1\|_{W_n(L^r,\ell^s)}^p\big)^{1/p}\le C\|f_0+f_1\|_{W_n(L^r,\ell^s)},
\]
appropriately modified if $p=\infty$, holds for all such $f_0$ and $f_1$ if and only if the mixed norm embedding
\[
\|\|a\|_{\ell^s(\mathbb Z^n)}\|_{\ell^p(\mathbb Z_2)}\le C\|\|a\|_{\ell^r(\mathbb Z_2)}\|_{\ell^s(\mathbb Z^n)}
\]
holds for all non-negative $a$. By \cite[Theorem 4.1]{GS}, the best constant in the embedding is $C=2^{\frac1p-\frac1{\max\{p,r,s\}}}$. This proves the first equation of the lemma. 

Suppose the functions $g_0,g_1\in W_n(L^r,\ell^s)$ are disjointly supported and define $b=(b_{k,i})_{k\in\mathbb Z^n;i\in\mathbb Z_2}$ by setting $b_{k,i}^r=\int_{Q_k}|g_i|^r$. Disjointness shows that $b_{k,0}^r+b_{k,1}^r=\int_{Q_k}|g_0+g_1|^r$. For any constant $C$, the inequality 
\[
\|g_0+g_1\|_{W_n(L^r,\ell^s)}\le C\big(\|g_0\|_{W_n(L^r,\ell^s)}^q+\|g_1\|_{W_n(L^r,\ell^s)}^q\big)^{1/q},
\]
appropriately modified if $q=\infty$, holds for all such $g_1$ and $g_2$ if and only if the mixed norm embedding
\[
\|\|b\|_{\ell^r(\mathbb Z_2)}\|_{\ell^s(\mathbb Z^n)}\le C\|\|b\|_{\ell^s(\mathbb Z^n)}\|_{\ell^q(\mathbb Z_2)}
\]
holds for all non-negative $b$. By \cite[Theorem 4.1]{GS}, the best constant in the embedding is $C=2^{\frac1{\min\{q,r,s\}}-\frac1q}$ This proves the second equation of the lemma.
\end{proof}

With these estimates in hand we give a Christ-Kiselev maximal theorem for amalgams.
\begin{theorem}\label{maxamalg} Let $r,s,\tilde r,\tilde s\in(0,\infty]$, suppose $T$ is a bounded linear operator from $W_n(L^r,\ell^s)$ to $W_n(L^{\tilde r},\ell^{\tilde s})$ and let $T^*$ be the maximal operator based on $T$ associated to a filtration of Lebesgue measurable subsets of $\mathbb R^n$. If $\max\{r,s\}<\min\{\tilde r,\tilde s\}$, then $T^*$ is bounded from $W_n(L^r,\ell^s)$ to $W_n(L^{\tilde r},\ell^{\tilde s})$ and 
\[
\|T^*\|_{W_n(L^r,\ell^s)\to W_n(L^{s'},\ell^{r'})}\le \gamma
\|T\|_{W_n(L^r,\ell^s)\to W_n(L^{s'},\ell^{r'})},
\]
where $\gamma$ depends only on $r$, $s$, $\tilde r$, and $\tilde s$. If $\tilde r\ge1$ and $\tilde s\ge1$, then
\[
\gamma=\frac{2^{1/q}}{1-(2^{p/q}-1)^{1/p}}.
\]
where $p=\max\{r,s\}$ and $q=\min\{\tilde r,\tilde s\}$.
\end{theorem}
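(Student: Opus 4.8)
The plan is to apply Theorem \ref{thmChK2} directly, using the two-term estimates computed in Lemma \ref{ulamalg} and verifying the quantitative hypothesis $\ell u<(1+\kappa^{-\tau})^{1/\tau}$. First I would set $p=\max\{r,s\}$ and $q=\min\{\tilde r,\tilde s\}$, so that the hypothesis $\max\{r,s\}<\min\{\tilde r,\tilde s\}$ becomes exactly $0<p<q\le\infty$. By Lemma \ref{ulamalg}, with these choices we get $\ell=\ell_{(p),2}(W_n(L^r,\ell^s))=2^{1/p-1/\max\{p,r,s\}}=2^{1/p-1/p}=1$, since $p=\max\{r,s\}\ge\max\{r,s\}$ forces $\max\{p,r,s\}=p$. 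Similarly $u=u^{(q),2}(W_n(L^{\tilde r},\ell^{\tilde s}))=2^{1/\min\{q,\tilde r,\tilde s\}-1/q}=2^{1/q-1/q}=1$, since $q=\min\{\tilde r,\tilde s\}$ gives $\min\{q,\tilde r,\tilde s\}=q$. So $\ell u=1$.

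Next I would record that $W_n(L^{\tilde r},\ell^{\tilde s})$ has the Fatou property (it is a mixed-norm space built from Lebesgue spaces, so monotone convergence applies coordinatewise and then in the outer $\ell^{\tilde s}$ norm), as required by Theorem \ref{thmChK2}. The triangle inequality constant $\kappa$ for the range space is finite and depends only on $\tilde r$ and $\tilde s$: indeed $W_n(L^{\tilde r},\ell^{\tilde s})$ is a quasi-Banach function lattice, and one checks $\kappa=\max\{1,2^{1/\min\{\tilde r,\tilde s\}-1}\}$ by the same elementary argument used for $L^q$ in Corollary \ref{pq}, applied first in the inner $L^{\tilde r}$ norm and then in the outer $\ell^{\tilde s}$ norm. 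With $\ell u=1$ and $\kappa\ge1$, the condition $\ell u<(1+\kappa^{-\tau})^{1/\tau}$ reads $1<(1+\kappa^{-\tau})^{1/\tau}$, which holds because $\kappa^{-\tau}>0$ and $\tau>0$ (here $\frac1\tau=\frac1p-\frac1q>0$). Hence Theorem \ref{thmChK2} applies and yields $\|T^*\|\le\gamma\|T\|$ with $\gamma$ given by the formula there, which depends only on $p$, $q$, and $\kappa$, hence only on $r,s,\tilde r,\tilde s$.

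Finally, for the Banach case: if $\tilde r\ge1$ and $\tilde s\ge1$, then $\min\{\tilde r,\tilde s\}\ge1$, so $W_n(L^{\tilde r},\ell^{\tilde s})$ is a genuine Banach function lattice ($\kappa=1$), and $q=\min\{\tilde r,\tilde s\}\ge1$. The second constant formula in Theorem \ref{thmChK2} then gives $\gamma=\frac{u2^{1/q}}{1-(\ell^pu^p2^{p/q}-1)^{1/p}}=\frac{2^{1/q}}{1-(2^{p/q}-1)^{1/p}}$ after substituting $\ell=u=1$. (One should also note $2^{p/q}-1<1$ since $p<q$, so the denominator is positive and $\gamma$ is finite.) I do not anticipate a serious obstacle here; the only point requiring a little care is confirming the value of $\kappa$ for the amalgam quasi-norm and confirming the Fatou property, but both reduce to the one-dimensional Lebesgue-space facts applied iteratively in the two layers of the mixed norm. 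The substantive work was already done in Lemma \ref{ulamalg} and Theorem \ref{thmChK2}.
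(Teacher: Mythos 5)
Your proof is correct and follows essentially the same approach as the paper: apply Lemma \ref{ulamalg} with $p=\max\{r,s\}$ and $q=\min\{\tilde r,\tilde s\}$ to get $\ell_{(p),2}=u^{(q),2}=1$, then invoke Theorem \ref{thmChK2}. One small slip: the triangle-inequality constant for the amalgam should be the product $\max\{1,2^{1/\tilde r-1}\}\max\{1,2^{1/\tilde s-1}\}$ coming from the two layers of the mixed norm, not the single term $\max\{1,2^{1/\min\{\tilde r,\tilde s\}-1}\}$; this does not affect either the qualitative conclusion (any finite $\kappa$ depending only on $\tilde r,\tilde s$ suffices since $\ell u=1<(1+\kappa^{-\tau})^{1/\tau}$ automatically) or the Banach case (where $\kappa=1$), so the argument stands.
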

\begin{proof} By Lemma \ref{ulamalg}, $p=\max\{r,s\}$ implies $\ell_{(p),2}(W_n(L^r,\ell^s))=1$ and $q=\min\{\tilde r,\tilde s\}$ implies $u^{(q),2}(W_n(L^{s'},\ell^{r'}))=1$. It is easy to see that the triangle inequality in $W_n(L^{\tilde r},\ell^{\tilde s})$ holds with constant $\kappa=\max\{1,2^{1/\tilde r}\}\max\{1,2^{1/\tilde r}\}$. Theorem \ref{thmChK2} gives the conclusion.  
\end{proof}

\subsection{Application: A Menshov-Paley-Zygmund theorem}
Let $\mathcal I$ be the collection of compact subintervals of $\mathbb R$ and define the maximal Fourier transform, $\mathcal F_*$, of a locally integrable function $f$ by
\[
\mathcal F_*f(x)=\sup_{I\in\mathbb I}\Big|\int_Ie^{-2\pi ixt}f(t)\,dt\Big|\,.
\]
Note that the collection of compact intervals is not a filtration.

Next is a Menshov-Paley-Zygmund theorem for amalgams. We offer it as an application of the previous theorem, not as a new result. Indeed, pointwise convergence of the Fourier transform and boundedness of the maximal Fourier transform in Wiener amalgams have been well studied. See, for example, \cite{FW}. 
\begin{theorem}\label{MPZ}
Let $1\le r<2$ and $1\le s<2$. Then 
\[
\|\mathcal F_*\|_{W_1(L^r,\ell^s)\to W_1(L^{s'},\ell^{r'})}\le\frac{2^{1/p'}4}{1-(2^{p-1}-1)^{1/p}}\|\mathcal F\|_{W_1(L^r,\ell^s)\to W_1(L^{s'},\ell^{r'})},
\]
where $p=\max\{r,s\}$.
Consequently, if $f\in W_1(L^r,\ell^s)$, then for almost every $x\in \mathbb R$,
\[
\lim_{N_-,N_+\to\infty}\int_{-N_-}^{N_+}e^{-2\pi ixt}f(t)\,dt=f(x).
\]
\end{theorem}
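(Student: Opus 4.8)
\textbf{Proof plan for Theorem \ref{MPZ}.}

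The plan is to reduce the maximal Fourier transform over the non-filtration collection $\mathcal I$ of \emph{all} compact intervals to a maximal operator based on an honest filtration, and then invoke Theorem \ref{maxamalg}. First I would split each interval $I=[a,b]$ at the origin (or at a convenient point), writing $\chi_I = \chi_{I\cap(-\infty,0]} + \chi_{I\cap[0,\infty)}$, so that $\int_I e^{-2\pi ixt}f(t)\,dt = \int_{I_-} + \int_{I_+}$ where $I_-=I\cap(-\infty,0]$ and $I_+=I\cap[0,\infty)$. This reduces $\mathcal F_*f$, up to the quasi-triangle constant of $W_1(L^{s'},\ell^{r'})$, to the sum of two suprema: one over half-lines of the form $(-N_-,0]$ sitting inside left-infinite intervals, and one over $[0,N_+)$. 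Each of these families \emph{is} a filtration of $\mathbb R$ — the sets $(-c,0]$ are totally ordered by inclusion, as are the sets $[0,c)$ — so the sublinear operator $f\mapsto \sup_c |\mathcal F(f\chi_{(-c,0]})|$ is exactly a maximal operator $\mathcal F^*$ based on $\mathcal F$ associated to a filtration, and likewise for the other half. Actually one must be slightly careful: an arbitrary compact interval $[a,b]$ with $a<b<0$ contributes $\chi_{[a,b]}$, not $\chi_{(-\infty,b]}$; so I would instead write $\chi_{[a,b]} = \chi_{(-\infty,b]} - \chi_{(-\infty,a)}$ and bound $|\int_a^b| \le |\int_{-\infty}^b| + |\int_{-\infty}^a|$, each of which is dominated by $\sup_c|\mathcal F(f\chi_{(-\infty,c]})|$. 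Combining, $\mathcal F_* f \lesssim \mathcal F_1^* f + \mathcal F_2^* f$ pointwise, where $\mathcal F_1^*,\mathcal F_2^*$ are maximal operators associated to the two filtrations $\{(-\infty,c]:c\in\mathbb Q\}$ and $\{[c,\infty):c\in\mathbb Q\}$ (countability comes from density of $\mathbb Q$, using continuity of $c\mapsto\mathcal F(f\chi_{(-\infty,c]})(x)$ for integrable $f$, or more robustly from the Fatou property as in the proof of Theorem \ref{thmChK2}).

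Next I would apply Theorem \ref{maxamalg} to each of $\mathcal F_1^*$ and $\mathcal F_2^*$. The hypotheses $1\le r<2$, $1\le s<2$ give $\max\{r,s\}<2$ and, by the amalgam Hausdorff–Young inequality cited before Lemma \ref{ulamalg}, $\mathcal F:W_1(L^r,\ell^s)\to W_1(L^{s'},\ell^{r'})$ is bounded; here the target has $\tilde r=s'>2$ and $\tilde s=r'>2$, so $\min\{\tilde r,\tilde s\}>2>\max\{r,s\}$ and Theorem \ref{maxamalg} applies with $p=\max\{r,s\}$ and $q=\min\{s',r'\}=(\max\{r,s\})'=p'$. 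Since $\tilde r=s'\ge1$ and $\tilde s=r'\ge1$, the theorem yields the explicit constant $\frac{2^{1/q}}{1-(2^{p/q}-1)^{1/p}}=\frac{2^{1/p'}}{1-(2^{p-1}-1)^{1/p}}$ for each of $\mathcal F_1^*,\mathcal F_2^*$. Tracking the quasi-triangle-inequality factors from the two splittings (splitting at the origin, then the $\chi_{(-\infty,b]}-\chi_{(-\infty,a)}$ decomposition) through the quasi-norm of $W_1(L^{s'},\ell^{r'})$ — whose triangle constant is a fixed number depending only on $r,s$ and is at most $2$ since $s',r'\ge1$ actually make it a genuine norm, but the doublings contribute factors of $2$ — produces the overall factor $4$ in the stated bound $\frac{2^{1/p'}4}{1-(2^{p-1}-1)^{1/p}}$.

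Finally, for the pointwise convergence conclusion, I would argue by a standard density plus maximal-inequality routine. On the dense subclass of compactly supported functions in $W_1(L^r,\ell^s)$ — which contains, say, $C_c(\mathbb R)$ — the truncated integrals $\int_{-N_-}^{N_+}e^{-2\pi ixt}f(t)\,dt$ equal $\mathcal F f(x)$ for $N_\pm$ large, and Fourier inversion (valid since $\hat f$ is continuous and, for nice $f$, integrable) gives the limit $f(x)$ a.e.; strictly one should take $f$ in a class where inversion is classical, e.g. Schwartz functions, which are dense in $W_1(L^r,\ell^s)$. For general $f$, write $f=g+h$ with $g$ in the dense class and $\|h\|_{W_1(L^r,\ell^s)}$ small. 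The oscillation of the truncated integrals of $h$ is controlled pointwise by $2\mathcal F_*h(x)$, and by the maximal inequality just proved, $\|\mathcal F_*h\|_{W_1(L^{s'},\ell^{r'})}$ is small; since $W_1(L^{s'},\ell^{r'})$-smallness forces convergence in measure on every cube, a subsequence argument (or the usual $\limsup$ estimate: $\{x:\limsup_{N_\pm}|\int_{-N_-}^{N_+}e^{-2\pi ixt}f\,dt - f(x)|>\lambda\}$ has arbitrarily small intersection with each cube) gives that the limit exists and equals $f(x)$ for a.e.\ $x$. The main obstacle, and the step deserving the most care, is the first one: correctly handling arbitrary compact intervals — including those not containing $0$ and those with both endpoints finite — so that $\mathcal F_*$ is genuinely dominated by a bounded \emph{finite} combination of filtration-based maximal operators, while keeping explicit track of the quasi-norm constants that accumulate, since the theorem asserts a precise value for $\gamma$.
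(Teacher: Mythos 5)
Your proposal is correct and follows essentially the same route as the paper: dominate the supremum over arbitrary (rational) compact intervals by filtration-based maximal operators at the cost of a factor of $4$, apply Theorem \ref{maxamalg} together with the amalgam Hausdorff--Young inequality using $p=\max\{r,s\}$ and $q=\min\{r',s'\}=p'$, and deduce the a.e.\ convergence by density of $L^1$ in $W_1(L^r,\ell^s)$ and the standard maximal-function argument. The only real deviation is your choice of filtrations $\{(-\infty,c]:c\in\mathbb Q\}$ and $\{[c,\infty):c\in\mathbb Q\}$ rather than the paper's $\mathcal A=\{[0,y]:0\le y\in\mathbb Q\}$ and $-\mathcal A$; the paper's choice is slightly cleaner because $f\chi_{[0,y]}$ is integrable for locally integrable $f$, so $\mathcal F(f\chi_{[0,y]})$ is a genuine convergent integral, whereas $f\chi_{(-\infty,c]}$ need not be in $L^1$, so the symbols $\int_{-\infty}^b$ in your decomposition have to be read as values of the extended operator $\mathcal F(f\chi_{(-\infty,b]})$, and the identity $\int_a^b=\int_{-\infty}^b-\int_{-\infty}^a$ holds only after fixing representatives a.e.\ in $x$ (a fixable but unnecessary complication).
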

\begin{proof}  First observe that for each locally integrable function $f$ on $\mathbb R$, the supremum that defines $\mathcal F_*f$ may be taken over the intervals with rational endpoints. This has no effect on the value of $\mathcal F_*f$.

Set $\mathcal A=\{[0,y]:0\le y\in\mathbb Q\}$ and let $\mathcal F_+^*$ and $\mathcal F_-^*$ be the maximal operators based on the one-dimensional Fourier transform and associated to the filtrations $\mathcal A$ and $-\mathcal A$, respectively.  Since every interval with rational endpoints is a union or difference of two elements of $\mathcal A\cup(-\mathcal A)$ we see that $\mathcal F_*\le2\mathcal F_+^*+2\mathcal F_-^*$. Since $\max\{r,s\}<2<\min\{r',s'\}$, the first conclusion of the theorem follows from Theorem \ref{maxamalg} and the Hausdorff-Young inequality for amalgams mentioned above. Since $L^1(\mathbb R)$ is dense in $W_1(L^r,\ell^s)$, the second conclusion follows from the first by a standard argument. See, for example, \cite[Corollary 8.10]{Tao}. 
\end{proof}

\section{For K\"othe dual operators}

Under mild conditions, a bounded, linear map, $T:E\to F$, between Banach function lattices has a K\"othe dual operator $T':F'\to E'$, where $E'$ and $F'$ denote the K\"othe dual spaces of $E$ and $F$, respectively. This is not true for sublinear operators such as maximal operators associated to filtrations. However, the upper and lower estimates that guarantee boundedness of the maximal operator in Theorem \ref{thmChK} imply lower and upper estimates on the dual spaces, enabling us to prove boundedness of the maximal K\"othe dual operator from conditions on the original spaces.

Let $E$ and $F$ be Banach function lattices over $\sigma$-finite measure spaces $(\Omega, \mu)$ and $(\widetilde\Omega, \nu)$, respectively, and let $T: E \to F$ be a bounded linear operator. A linear map $T': F' \to E'$ that satisfies
\[
\int_{\widetilde\Omega} gTf\,d\nu = \int_{\Omega} fT'g\, d\mu, \quad f\in E, g\in F',
\]
is called the K\"othe dual of $T$. In this case $T': F' \to E'$ is a bounded operator
with $\|T'\|_{F'\to E'} \leq \|T\|_{E\to F}$. If $E$ has order continuous norm, then every bounded, linear operator $T: E \to F$ has a K\"othe dual operator. (See \cite[Theorem 2.3]{MM}).

\begin{corollary} \label{dual} Let $E$ and $F$ be Banach function lattices with the weak Fatou property. Suppose $T': E'\to F'$ is the K\"othe dual of a bounded, linear operator $T:E\to F$ and let $(T')^{*}$ be the  maximal operator based on $T'$ associated to a filtration.
\begin{itemize}
\item[{\rm(i)}]\label{duala} If there exist $1\le p<q\le\infty$ such that $\ell_{(p), 2}(E)u^{(q), 2}(F)<2^{\frac1p-\frac1q}$, then $\|(T')^{*}\|_{F'\to E'}\le C\|T'\|_{F'\to E'}$ for some constant $C$ depending only on $p$, $q$, $E$, and $F$.

\item[{\rm(ii)}]\label{dualb} If there exist $1\le p<q\le\infty$ such that $E$ satisfies a lower $p$-estimate and $F$ satisfies an upper $q$-estimate,  then  
\[
\|(T')^{*}\|_{F'\to E'}\le \frac{2^{1/p'}}{1-(2^{q'/p'}-1)^{1/q'}}\ell_{(p)}(E)u^{(q)}(F)\|T'\|_{F'\to E'}
\]
\end{itemize}
\end{corollary}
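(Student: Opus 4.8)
The plan is to deduce Corollary \ref{dual} from Theorem \ref{thmChK2} and Theorem \ref{thmChK} by transferring the hypotheses from the pair $(E,F)$ to the dual pair $(F',E')$ via Proposition \ref{ulndual}. The key observation is that the K\"othe dual operator $T'$ plays, for the pair $F'\to E'$, exactly the role that $T$ plays for $E\to F$, and that $T'$ is bounded with $\|T'\|_{F'\to E'}\le\|T\|_{E\to F}$; moreover $(T')^{*}$, the maximal operator based on $T'$ associated to a filtration of $\widetilde\Omega$, is precisely the object to which the main theorems apply once we check their hypotheses for the domain $F'$ and range $E'$.

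For part (i), I would set $p_1=q'$ and $q_1=p'$, so that $1\le p_1<q_1\le\infty$ since $1\le p<q\le\infty$. By Proposition \ref{ulndual}, $\ell_{(p_1),2}(F')=u^{(q),2}(F)$ and $u^{(q_1),2}(E')=\ell_{(p),2}(E)$, using that $E$ and $F$ have the weak Fatou property (so that both spaces, being Banach function lattices, satisfy Nakano's theorem; note we need $F'$ and $E'$ to have the Fatou property for Theorem \ref{thmChK2}, which holds automatically for any K\"othe dual). Thus the product $\ell_{(p_1),2}(F')u^{(q_1),2}(E')=\ell_{(p),2}(E)u^{(q),2}(F)$, and the hypothesis $\ell_{(p),2}(E)u^{(q),2}(F)<2^{1/p-1/q}$ becomes $\ell_{(p_1),2}(F')u^{(q_1),2}(E')<2^{1/p_1-1/q_1}$ after noting $\frac1{p_1}-\frac1{q_1}=\frac1{q'}-\frac1{p'}=\frac1p-\frac1q$. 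Since $E'$ is a Banach space, the triangle-inequality constant is $\kappa=1$ and the condition $\ell u<(1+\kappa^{-\tau})^{1/\tau}=2^{1/\tau}=2^{1/p_1-1/q_1}$ is exactly what we have verified. Theorem \ref{thmChK2} then gives $\|(T')^{*}\|_{F'\to E'}\le\gamma\|T'\|_{F'\to E'}$ with $\gamma$ the stated Banach-space constant, depending only on $p_1,q_1$ (equivalently $p,q$) and on the two-term constants, which in turn depend only on $E$ and $F$.

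For part (ii), the argument is parallel but uses the qualitative Theorem \ref{thmChK}. With $p_1=q'$ and $q_1=p'$ again, Proposition \ref{ulndual} gives that $F'$ satisfies a lower $p_1$-estimate with $\ell_{(p_1)}(F')=u^{(q)}(F)$ and $E'$ satisfies an upper $q_1$-estimate with $u^{(q_1)}(E')=\ell_{(p)}(E)$. Since $E'$ is a Banach space and $q_1=p'\ge1$, the Banach-space form of Theorem \ref{thmChK} applies and yields
\[
\|(T')^{*}\|_{F'\to E'}\le\frac{2^{1/q_1}}{1-(2^{p_1/q_1}-1)^{1/p_1}}\,\ell_{(p_1)}(F')u^{(q_1)}(E')\,\|T'\|_{F'\to E'}.
\]
Substituting $q_1=p'$, $p_1=q'$, $\ell_{(p_1)}(F')=u^{(q)}(F)$, and $u^{(q_1)}(E')=\ell_{(p)}(E)$ gives exactly the claimed bound, since $2^{1/q_1}=2^{1/p'}$ and $2^{p_1/q_1}=2^{q'/p'}$.

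I expect the only genuine subtlety is bookkeeping: making sure the index substitution $p\mapsto q'$, $q\mapsto p'$ sends the strict inequality $p<q$ to $q'<p'$ (it does, including the endpoint cases with $\infty$), and confirming that the Fatou/weak-Fatou hypotheses are adequate to invoke both Proposition \ref{ulndual} (which needs the weak Fatou property on $E$ and $F$) and the main theorems (which need the Fatou property on the \emph{range} space, here $E'$, which every K\"othe dual possesses). There is no new analytic obstacle; the content is entirely in the duality of upper and lower estimates already recorded in Proposition \ref{ulndual}, combined with the boundedness and the defining adjoint relation of $T'$.
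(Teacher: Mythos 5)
Your proof is correct and follows essentially the same route as the paper's: apply Proposition \ref{ulndual} to convert the lower/upper estimates on $E$ and $F$ into upper/lower estimates on $E'$ and $F'$ with dual indices, observe that $E'$ (the range of $T'$) has the Fatou property automatically as a K\"othe dual, and then invoke Theorems \ref{thmChK2} and \ref{thmChK} for $T':F'\to E'$. The only small imprecision is that Theorem \ref{thmChK2} requires the Fatou property only of the range space $E'$, not of $F'$, but since both hold automatically this does not affect the argument.
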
 
\begin{proof} We will apply Theorem \ref{thmChK2} to the operator $T':F'\to E'$. Since $E'$ is a K\"othe dual, it has the Fatou property. Suppose there exist $p,q$ such that $1\le p<q\le\infty$ and $\ell_{(p), 2}(E)u^{(q), 2}(F)<2^{\frac1p-\frac1q}$. Note that $\frac1p-\frac1q=\frac1{q'}-\frac1{p'}$. By Proposition \ref{ulndual} we have $1\le q'<p'\le\infty$ and $\ell_{(q'), 2}(F')u^{(p'), 2}(E')<2^{\frac1{q'}-\frac1{p'}}$. Now Theorem \ref{thmChK2} gives (i).

Now suppose there exist $p,q$ such that $1\le p<q\le\infty$, $E$ satisfies a lower $p$-estimate and $F$ satisfies an upper $q$-estimate. Proposition \ref{ulndual} shows that $F'$ satisfies a lower $q'$-estimate and $E'$ satisfies an upper $p'$-estimate. Theorem \ref{thmChK} gives (ii).
\end{proof}

\subsection{Application: The Fourier Transform in Lorentz $\Gamma$-spaces}
Under mild conditions, the Fourier transform on $\mathbb R^n$ with Lebesgue measure is its own K\"othe dual. So Proposition \ref{ulndual} can be used to prove boundedness of maximal operators based on the Fourier transform associated to a filtration. 

\begin{lemma} \label{dualF}
Let $E$ and $F$ be Banach function lattices over $\mathbb{R}^n$ equipped with Lebesgue measure. Suppose that $E$ and $F'$ are order continuous and there exists $C>0$ such
that $\|\widehat{f}\|_F \le C \|f\|_E$ for all $f\in E\cap L^1$. Then the Fourier transform extends from $E\cap L^1$ to a bounded linear operator from $E$ to $F$. It also extends from $F'\cap L^1$ to a bounded linear operator from $F'$ to $E'$. The second extension is the K\'othe dual of the first.
\end{lemma}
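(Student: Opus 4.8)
The claim is essentially a density-plus-duality argument, so the plan is to produce the two extensions separately and then verify the K\"othe duality relation on a dense set. First I would extend $\mathcal F$ from $E\cap L^1$ to $E$. The hypothesis gives $\|\widehat f\|_F\le C\|f\|_E$ for $f\in E\cap L^1$, so the Fourier transform is a bounded operator on the subspace $E\cap L^1$ with the $E$-norm. To extend it to all of $E$ by continuity I need $E\cap L^1$ to be dense in $E$; this is where order continuity of $E$ enters. Indeed, for a Banach function lattice over a $\sigma$-finite measure space, order continuity of the norm implies that simple functions with support of finite measure are dense, and such functions lie in $E\cap L^1$. So the unique bounded linear extension $T\colon E\to F$ exists, with $\|T\|_{E\to F}\le C$.

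\textbf{Plan (continued).} Next I would do the same on the dual side. The assumed inequality, together with the fact that the Fourier transform (suitably interpreted, via $\widehat{\widehat g}(x)=g(-x)$, i.e. using that $\mathcal F$ is, up to reflection, its own inverse and self-adjoint) satisfies a matching bound $\|\widehat g\|_{E'}\le C\|g\|_{F'}$ for $g\in F'\cap L^1$. To see this dual inequality I would use Nakano's theorem (available since $E'$ has the Fatou property, being a K\"othe dual): for $g\in F'\cap L^1$,
\[
\|\widehat g\|_{E'}=\sup_{\|f\|_E\le1}\Big|\int_{\mathbb R^n}f\,\widehat g\Big|
=\sup_{\|f\|_E\le1}\Big|\int_{\mathbb R^n}\widehat f\,g\Big|\le\|g\|_{F'}\sup_{\|f\|_E\le1}\|\widehat f\|_F\le C\|g\|_{F'},
\]
where the multiplication formula $\int f\,\widehat g=\int\widehat f\,g$ is justified by Fubini for $f,g\in L^1$ and the restriction to $f\in E\cap L^1$ is harmless since such $f$ are dense in the unit ball of $E$ (order continuity again). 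Then, since $F'$ is order continuous, $F'\cap L^1$ is dense in $F'$, and I get a unique bounded linear extension $S\colon F'\to E'$ with $\|S\|_{F'\to E'}\le C$.

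\textbf{Plan (continued).} Finally I would verify that $S=T'$, i.e. that $\int_{\mathbb R^n} g\,(Tf)=\int_{\mathbb R^n} f\,(Sg)$ for all $f\in E$, $g\in F'$. For $f\in E\cap L^1$ and $g\in F'\cap L^1$ this is exactly the multiplication formula $\int \widehat f\,g=\int f\,\widehat g$, which holds by Fubini. The general case follows by a double density argument: fix $g\in F'\cap L^1$ and note both sides are continuous in $f\in E$ (using boundedness of $T$ into $F$ and of the functional $h\mapsto\int gh$ on $F$, and boundedness of $S$ into $E'$ and of $h\mapsto\int h\,(Sg)$ on $E$), so the identity extends from the dense set $E\cap L^1$ to all of $E$; then fix $f\in E$ and run the same argument in the $g$ variable, extending from $F'\cap L^1$ to $F'$. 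By the defining property this shows $S$ is the K\"othe dual of $T$, completing the proof.

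\textbf{Main obstacle.} The only genuinely delicate point is making the multiplication formula and the continuity/density steps fit together cleanly: one must be careful that the bilinear form $(f,g)\mapsto\int g\,(Tf)$ is separately continuous on $E\times F'$ (so that density of $E\cap L^1$ in $E$ and of $F'\cap L^1$ in $F'$ can be invoked one variable at a time) and that the integrals in question are genuinely absolutely convergent at each stage — which is why the argument passes through $E\cap L^1$ and $F'\cap L^1$ rather than working on $E$ and $F'$ directly. Establishing that the relevant dense subspaces are indeed dense is exactly the role of the order-continuity hypotheses, and this should be quoted from the standard theory of Banach function lattices rather than reproved.
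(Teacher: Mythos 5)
Your proposal is correct and follows essentially the same route as the paper: density of $E\cap L^1$ in $E$ (and $F'\cap L^1$ in $F'$) via order continuity, the boundedness of $\mathcal F$ on the dual side via the multiplication formula $\int f\widehat g=\int\widehat f g$, and verification of the K\"othe duality relation by passing to limits from the dense subspaces. One small inaccuracy worth flagging: the identity $\|\widehat g\|_{E'}=\sup_{\|f\|_E\le1}\big|\int f\widehat g\big|$ (restricted to integrable $f$) is not Nakano's theorem — it is the definition of the K\"othe dual norm, together with the paper's preliminary remark that the supremum may be restricted to integrable $f$. Nakano's theorem is the reverse statement, recovering $\|f\|_E$ from the pairing with $E'$, and plays no role here. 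Your separate-continuity/double-density argument for verifying $S=T'$ is a harmless variant of the paper's simultaneous approximation argument (which instead observes $f_k\widehat g_k\to f\widehat g$ and $\widehat f_k g_k\to\widehat f g$ in $L^1$ and passes to the limit directly).
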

\begin{proof} Since Lebesgue measure on $\mathbb R^n$ is $\sigma$-finite, for each measurable  $f$ there exist non-negative simple functions $f_k\in L^1$ such that $f_k\uparrow |f|$ a.e. If $f\in E$, then each $f_k\in L^1\cap E$ and order continuity implies $\|f-f_k\operatorname{sgn}(f)\|_E\to0$. It follows that $E\cap L^1$ is dense in $E$. Since $F$ is complete, the Fourier transform extends to $E$ and $\|\widehat{f}\|_F \le C \|f\|_E$ for all $f\in E$. It follows that $\hat fg$ is integrable for all $g\in F'$.

For $g\in F'\cap L^1$ and $f\in E\cap L^1$ with $\|f\|_E\le 1$, we have $\|\hat f\|_F\le C$ so
\[
\int_{\mathbb R^n} f\hat g=\int_{\mathbb R^n}\hat f g\le\|\hat f\|_F\|g\|_{F'}\le C\|g\|_{F'}.
\]
Take the supremum over all such $f$ to get $\|\hat g\|_{E'}\le C\|g\|_{F'}$ for all $g\in F'\cap L^1$. As above we use the order continuity of $F'$ to see that $F'\cap L^1$ is dense in $F'$. Since $E'$ is complete, the Fourier transform extends to all of $F'$ and $\|\hat g\|_{E'}\le C\|g\|_{F'}$ for all $g\in F'$. It follows that $f\hat g$ is integrable for all $f\in E$. 

Let $f\in E$ and $g\in F'$. Choose $f_k\in E\cap L^1$ and $g_k\in F'\cap L^1$ such that $f_k\to f$ in $E$ and $g_k\to g$ in $F'$. Then $\hat f_k\to\hat f$ in $F$ and $\hat g_k\to\hat g$ in $E'$. We conclude that $f_k\hat g_k\to f\hat g$ in $L^1$ and $\hat f_kg_k\to \hat f g$ in $L^1$. Since $\int_{\mathbb R^n} f_k\hat g_k=\int_{\mathbb R^n}\hat f_k g_k$ for all $k$, we have $\int_{\mathbb R^n} f\hat g=\int_{\mathbb R^n}\hat f g$. This shows that the Fourier transform extended to $F'$ is the K\"othe dual of the Fourier transform extended to $E$.
\end{proof} 

Let $1< r<\infty$ and let $w$ be a weight. The Lorentz $\Gamma$-space $\Gamma_{p,w}$ is defined to be the set of all $f\in L^0_\mu $ such that
\[
\|f\|_{\Gamma_{r,w}}  = \bigg(\int_0^\infty (f^{**})^r w\bigg)^{1/r} <\infty.
\]
Here $f^{**}(t)=\frac1t\int_0^t f^*$. Unlike the situation for $\Lambda$-spaces, $\Gamma_{r,w}$ is a Banach space for any $w$. When $w$ is nonincreasing $\Lambda_{r,w}=\Gamma_{r,w}$, with equivalent norms. The property we need here is that the K\"othe dual of $\Lambda_{r,w}$ is a $\Gamma$-space: If $\int_0^\infty w=\infty$ and $\Lambda_{r,w}$ is normable, then $\Lambda_{r,w}=\Gamma_{r',\tilde w}$, with equivalent norms, where $\tilde w(x)=(W(x)/x)^{-p'}w(x)$. See \cite[Remark after Theorem 1]{Saw}.

\begin{corollary}\label{Gam}  Let $1<r<s<\infty$ with $s\ge2$ and let $v$ and $w$ be nonincreasing weights such that $\int_0^\infty v=\infty$, $\int_0^\infty w=\infty$, and
\[
\sup_{z>0}zV(1/z)^{1/r}W(z)^{-1/s}<\infty.
\]
If there exists a $p$, with $r<p<s$, such that $t\mapsto W(t^{p/r})$ is convex, then $\mathcal F^*$, the maximal operator based on the Fourier transform and associated to a filtration of Lebesgue measurable subsets of $\mathbb R^n$, is bounded from $\Gamma_{s',\tilde v}$ to $\Gamma_{r', \tilde w}$.
\end{corollary}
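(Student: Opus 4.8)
The plan is to combine Corollary \ref{FTLor} with the duality machinery developed in Section 6, specifically Corollary \ref{dual} and Lemma \ref{dualF}. The key observation is that the hypotheses of Corollary \ref{FTLor} are exactly what is needed to establish the boundedness of $\mathcal F:\Lambda_{r,w}\to\Lambda_{s,v}$, while the conclusion concerns the maximal operator on the K\"othe duals $\Gamma_{s',\tilde v}$ and $\Gamma_{r',\tilde w}$. So the first step is to invoke \cite[Definition 1 and Theorem 2]{BH} (as in the proof of Corollary \ref{FTLor}) to get a bounded Fourier transform $\mathcal F:\Lambda_{r,w}\to\Lambda_{s,v}$.

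Next I would set up the duality. Write $E=\Lambda_{r,w}$ and $F=\Lambda_{s,v}$. One must check that $E$ and $F'$ are order continuous so that Lemma \ref{dualF} applies; for Lorentz $\Lambda$-spaces with $1<r<\infty$ and a weight satisfying the doubling-type condition, order continuity of $E$ is standard, and $F'=\Gamma_{s',\tilde v}$ (using $\int_0^\infty v=\infty$, $s\ge2>1$ so $\Lambda_{s,v}$ is normable, and the formula $\tilde v(x)=(V(x)/x)^{-s'}v(x)$ from \cite[Remark after Theorem 1]{Saw}) has order continuous norm as a $\Gamma$-space. Lemma \ref{dualF} then shows the Fourier transform extends to a bounded operator $\mathcal F:F'\to E'$ that is the K\"othe dual of $\mathcal F:E\to F$, and by the same Sawyer identity $E'=\Gamma_{r',\tilde w}$ with $\tilde w(x)=(W(x)/x)^{-r'}w(x)$.

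The third step is to apply Corollary \ref{dual}. In the proof of Corollary \ref{FTLor} (via Theorem \ref{Lorentz}) the convexity of $t\mapsto W(t^{p/r})$ gives that $\Lambda_{r,w}$ has a lower $p$-estimate with $\ell_{(p)}(\Lambda_{r,w})=1$, and the fact that $v$ is nonincreasing gives that $\Lambda_{s,v}$ has an upper $q$-estimate with $q=s$ and $u^{(s)}(\Lambda_{s,v})=1$; moreover $r<p<s=q$ so $1\le p<q\le\infty$. These are exactly the hypotheses of Corollary \ref{dual}(ii), applied with $E=\Lambda_{r,w}$ and $F=\Lambda_{s,v}$, which yields boundedness of $(\mathcal F)^*$ (the maximal operator based on the K\"othe dual $\mathcal F:F'\to E'$) from $F'=\Gamma_{s',\tilde v}$ to $E'=\Gamma_{r',\tilde w}$, with an explicit constant. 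Since $F'$ and $E'$ are (up to equivalent renorming) the stated $\Gamma$-spaces, this gives the conclusion.

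The main obstacle I anticipate is bookkeeping around the hypotheses of Lemma \ref{dualF} and the identification of K\"othe duals: one must confirm that $\Lambda_{r,w}$ is order continuous and normable and that $\int_0^\infty w=\infty$ is used correctly to invoke Sawyer's duality formula for both $\Lambda_{r,w}$ and $\Lambda_{s,v}$, and one must check that passing to equivalent norms (implicit in ``$\Lambda_{r,w}=\Gamma_{r',\tilde w}$ with equivalent norms'') does not affect the upper/lower $p$-estimate structure --- it does not, since these are quasi-norm-equivalence invariant up to constants, and Corollary \ref{dual}(ii) already absorbs such constants. A minor point is ensuring $1<r<s$ with $s\ge2$ forces $s'\le 2\le s$ and $r'>s'$ so that all index restrictions (in particular $1\le q'<p'\le\infty$ as in the proof of Corollary \ref{dual}) are met; this is routine.
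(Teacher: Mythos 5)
Your proposal follows essentially the same route as the paper: establish $\mathcal F:\Lambda_{r,w}\to\Lambda_{s,v}$ via \cite{BH} as in Corollary~\ref{FTLor}, extract the lower $p$-estimate and upper $s$-estimate from the proof of Theorem~\ref{Lorentz}, invoke Corollary~\ref{dual}(ii), and then use Lemma~\ref{dualF} together with Sawyer's identity to recognize the K\"othe dual of $\mathcal F$ as $\mathcal F$ itself acting between the $\Gamma$-spaces. The only thing worth flagging is that Corollary~\ref{dual} explicitly requires $\Lambda_{r,w}$ and $\Lambda_{s,v}$ to have the weak Fatou property; the paper takes a sentence to verify this (via monotone convergence and $f_n\uparrow f \Rightarrow f_n^*\uparrow f^*$), and you should include that check rather than fold it into general ``bookkeeping.''
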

\begin{proof} Under these hypotheses, the proof of Corollary \ref{FTLor} shows that $\mathcal F$ is bounded from $\Lambda_{r,w}$ to $\Lambda_{s,v}$. It also shows that with $p$ chosen as above and $q=s$, the convexity and concavity conditions of Theorem \ref{Lorentz} are satisfied. But in the proof of Theorem \ref{Lorentz}, those conditions are shown to imply that $\Lambda_{r,w}$ has a lower $p$-estimate and $\Lambda_{s,v}$ has an upper $q$-estimate.

For the next step we need to know that the $\Lambda_{r,w}$ and $\Lambda_{s,v}$ have the weak Fatou property. In fact they have the Fatou property. This follows from the monotone convergence theorem and a well-known property of the rearrangement: If $f_n\uparrow f$ a.e., then $f_n^*\uparrow f^*$ a.e. Now Corollary \ref{dual}(ii) applies and we conclude that $\mathcal F'$ is bounded from $\Lambda_{s,v}'$ to $\Lambda_{r,w}'$, that is, from  $\Gamma_{s',\tilde v}$ to $\Gamma_{r', \tilde w}$.

It remains to show that $\mathcal F'$ is $\mathcal F$ itself. This follows from Lemma \ref{dualF}, once we verify that $\Lambda_{r,w}$ and  $\Gamma_{s',\tilde v}$ are order continuous. Among non-negative functions, $f\mapsto (f^*)^r$ and $f\mapsto (f^{**})^{s'}$ preserve order. So we readily reduce these assertions to the order continuity of $L^1$, which follows from Lebesgue's dominated convergence theorem.
\end{proof}

\section{Interpolation of Christ-Kiselev maximal operators}

In this section we briefly discuss how to combine our Christ-Kiselev maximal theorem on quasi-Banach function lattices with interpolation of sublinear operators to establish boundedness of maximal operators associated to filtrations.

\begin{theorem} \label{subint} Let $(E_0, E_1)$ and $(F_0, F_1)$ be couples of quasi-Banach function lattices over measure spaces $(\Omega,\mu)$ and $(\widetilde\Omega,\nu)$, respectively. Assume that $F_0$ and $F_1$ have the Fatou property. Let $\mathcal A$ be a filtration of $\Omega$, suppose $T:E_0+E_1\to F_0+F_1$ is a linear operator that is bounded from $E_i$ to $F_i$ for $i=0,1$, and let $T^*$ be the maximal operator based on $T$ associated to $\mathcal A$. 

If, for $i=0,1$, there exist $0<p_i<q_i<\infty$ such that $E_i$ satisfies a lower $p_i$-estimate and $F_i$ satisfies an upper $q_i$-estimate, then, whenever $E$ and $F$ are relative interpolation spaces with respect to $(E_0, E_1)$ and $(F_0, F_1)$, $T^*$ is bounded from $E$ to $F$ and 
\[
\|T^*\|_{E\to F}\le \delta\max_{i=0,1}\|T\|_{E_i\to F_i}.
\]
Here $\delta$ is a constant depending only on $p_i$, $q_i$, $E_i$, $F_i$, $E$ and $F$, for $i=0,1$.
\end{theorem}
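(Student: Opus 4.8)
The plan is to reduce the statement to a single-operator Christ--Kiselev estimate (Theorem \ref{thmChK}) applied to an intermediate pair of spaces, and then invoke interpolation of sublinear operators. The key point is that the maximal operator $T^*$ is sublinear and monotone, so once it is bounded $E_0\to F_0$ and $E_1\to F_1$, any reasonable interpolation machinery will carry it to the relative interpolation spaces $E$ and $F$. So first I would apply Theorem \ref{thmChK} twice: for $i=0$, the hypothesis gives $0<p_0<q_0<\infty$ with $E_0$ satisfying a lower $p_0$-estimate and $F_0$ an upper $q_0$-estimate, and since $F_0$ has the Fatou property, Theorem \ref{thmChK} yields $\|T^*\|_{E_0\to F_0}\le\delta_0\|T\|_{E_0\to F_0}$ with $\delta_0$ depending only on $p_0,q_0,\ell_{(p_0)}(E_0),u^{(q_0)}(F_0)$. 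Likewise $\|T^*\|_{E_1\to F_1}\le\delta_1\|T\|_{E_1\to F_1}$.

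Next I would set up the interpolation step carefully. The operator $T^*$ is not linear, but it is sublinear in the sense that $T^*(f+g)\le\kappa_F(T^*f+T^*g)$ pointwise (where $\kappa_F$ handles the quasi-norm constants), and it is positively homogeneous; moreover $|f|\le|g|$ implies $T^*f\le T^*g$ pointwise. These properties, together with the estimates $T^*:E_i\to F_i$ just established, are exactly what is needed for the Calder\'on--Mityagin type interpolation of sublinear operators between couples of quasi-Banach lattices. The cleanest route is to use the $K$-functional: for $f\in E_0+E_1$ and any decomposition $f=f_0+f_1$ with $f_i\in E_i$, sublinearity gives $T^*f\le\kappa_F(T^*f_0+T^*f_1)$, and then one bounds $K(t,T^*f;F_0,F_1)$ by $\kappa_F\bigl(\|T^*f_0\|_{F_0}+t\|T^*f_1\|_{F_1}\bigr)\le\kappa_F\max_i\{\delta_i\|T\|_{E_i\to F_i}\}\bigl(\|f_0\|_{E_0}+t\|f_1\|_{E_1}\bigr)$; taking the infimum over decompositions yields $K(t,T^*f;\bar F)\le C\,K(t,f;\bar E)$ with $C=\kappa_F\max_i\delta_i\|T\|_{E_i\to F_i}$. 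Since $E,F$ are relative interpolation spaces with respect to $(\bar E,\bar F)$, the relation $K(\cdot,T^*f;\bar F)\le C\,K(\cdot,f;\bar E)$ forces $T^*f\in F$ with $\|T^*f\|_F\le\delta\|f\|_E$, where $\delta$ absorbs $C$ and the relative interpolation constant of the pair $(E,F)$. (One should note that the Fatou property of $F_0$ and $F_1$ is what makes $K(t,h;\bar F)$ well-behaved and finite for $h=T^*f$, and it is also implicitly used in Theorem \ref{thmChK}.)

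I expect the main obstacle to be the interpolation-of-sublinear-operators step in the \emph{quasi}-Banach setting: one must make sure the chosen notion of ``relative interpolation space'' is one through which $K$-functional domination passes, i.e. that $E$ and $F$ are interpolation spaces in the sense guaranteeing $K(t,g;\bar F)\lesssim K(t,f;\bar E)$ for all $t$ $\Rightarrow$ $\|g\|_F\lesssim\|f\|_E$. If the paper's definition of relative interpolation space is the Aronszajn--Gagliardo/$K$-monotone one, this is automatic; otherwise one needs the lattices to satisfy a Calder\'on--Mityagin property, which for quasi-Banach function lattices holds under mild conditions and should be cited or assumed. A secondary, purely bookkeeping issue is tracking the quasi-norm triangle constants $\kappa_{F_0},\kappa_{F_1}$ through the $K$-functional estimate so that $\delta$ genuinely depends only on the listed data; this is routine. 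Apart from that, the argument is a straightforward concatenation of Theorem \ref{thmChK} with standard interpolation, with the sublinearity and monotonicity of $T^*$ doing the work that linearity would normally do.
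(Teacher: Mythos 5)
Your first step --- two applications of Theorem~\ref{thmChK} to get $\|T^*\|_{E_i\to F_i}\le\delta_i\|T\|_{E_i\to F_i}$ for $i=0,1$ --- matches the paper exactly. The genuine gap is in the interpolation step. Your $K$-functional argument proves, correctly, that $K(t,T^*f;\bar F)\le C\,K(t,f;\bar E)$ for all $t>0$. But in this paper ``relative interpolation spaces'' means the general notion of \cite[Theorem~2.4.2]{BL} (the Aronszajn--Gagliardo uniform boundedness theorem for \emph{linear} operators), not the $K$-monotone / Calder\'on--Mityagin notion. For general relative interpolation spaces, $K$-functional domination does \emph{not} imply $\|T^*f\|_F\lesssim\|f\|_E$; that implication is precisely what $K$-monotonicity would buy you, and it is not among the hypotheses. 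You flag this yourself (``If the paper's definition of relative interpolation space is the $K$-monotone one, this is automatic; otherwise one needs \ldots a Calder\'on--Mityagin property''), but you do not supply the missing ingredient, so the proof as written does not close.

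The paper takes a different route that avoids $K$-monotonicity entirely. It first quotes the linear interpolation estimate $\|S\|_{E\to F}\le\eta\max_i\|S\|_{E_i\to F_i}$ from \cite[Theorem~2.4.2]{BL}, then upgrades it to sublinear $S$ by invoking Bukhvalov's interpolation theorem for sublinear operators \cite{Bu}, or alternatively by a Hahn--Banach argument for lattice-valued operators (\cite[Theorem~1.25]{AB}): since $T^*$ is genuinely subadditive (constant $1$, no $\kappa$ needed) and positively homogeneous with values in a Dedekind complete lattice, for each fixed $f\in E$ one can find a linear operator $L$ dominated by $T^*$ with $Lf=T^*f$, apply the linear estimate to $L$, and deduce the sublinear estimate for $T^*$. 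This works for arbitrary relative interpolation couples. To repair your proof you would either need to add $K$-monotonicity of the couple $(E,F)$ as a hypothesis (strictly narrower than the theorem claims) or replace the $K$-functional step with a sublinear-operator interpolation theorem of the Bukhvalov/Hahn--Banach type. A minor secondary point: the subadditivity of $T^*$ is exact, $T^*(f+g)\le T^*f+T^*g$ pointwise, so there is no $\kappa_F$ to track in that inequality; the quasi-norm constants only enter when you pass from pointwise bounds to quasi-norm bounds in $F_0+F_1$.
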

\begin{proof} Suppose that $p_0$, $q_0$, $p_1$ and $q_1$ have the properties assumed above and let $E$ and $F$ be relative interpolation spaces with respect to $(E_0, E_1)$ and $(F_0, F_1)$. By \cite[Theorem 2.4.2]{BL} there is a constant $\eta$, depending only on the spaces $E_0$, $E_1$, $E$, $F_0$, $F_1$, and $F$, such that for all linear operators $T:E_0+E_1\to F_0+F_1$ that are bounded from $E_i$ to $F_i$ for $i=0,1$,
\[
\|T\|_{E\to F}\le\eta\max_{i=0,1}\|T\|_{E_i\to F_i}.
\]
It can be shown that if $S:E_0+E_1\to F_0+F_1$ is sublinear and is bounded from $E_i$ to $F_i$ for $i=0,1$, then
\[
\|S\|_{E\to F}\le\eta\max_{i=0,1}\|S\|_{E_i\to F_i}.
\]
This is a special case of a result from \cite{Bu}, but may also be proved directly using \cite[Theorem 1.25]{AB}, the Hahn-Banach extension theorem for operators taking values in a Dedekind complete Riesz space.

For $i=0,1$, Theorem \ref{thmChK} provides a $\delta_i$, depending only on $p_i$, $q_i$, $\ell_{(p_i)}(E_i)$, and $u^{(q_i)}(F_i)$, such that $\|T^*\|_{E_i\to F_i}\le\delta_i\|T\|_{E_i\to F_i}$. Therefore 
\[
\|T^*\|_{E\to F}\le\eta\max_{i=0,1}\|T^*\|_{E_i\to F_i}\le\delta\max_{i=0,1}\|T\|_{E_i\to F_i},
\]
where $\delta=\eta\max\{\delta_0,\delta_1\}$.
\end{proof}

Using classical Lorentz spaces as endpoints, we get the following.
\begin{corollary} \label{subLpq} Let $1\le p_i<q_i<\infty$ for $i=0,1$, and suppose 
\[
T: L_{p_0,1}(\mu)+L_{p_1, 1}(\mu) \to L_{q_0, \infty}(\nu)+ L_{q_1, \infty}(\nu)
\]
is bounded from $L_{p_i,1}(\mu)$ to $L_{q_i, \infty}(\nu)$ for $i=0,1$. Let $T^*$ be the maximal operator based on $T$ associated to a filtration.

If quasi-Banach function lattices $E$ and $F$ are relative interpolation spaces with respect to the couples $(L_{p_0,1}(\mu), L_{p_1, 1}(\mu))$ and $(L_{q_0, \infty}(\nu), L_{q_1, \infty}(\nu))$, then $T^*$ is bounded from $E$ to $F$. 

In particular, $T^{*}: L_{p,r}(\mu) \to L_{q, s}(\nu)$ is bounded whenever $\theta\in (0,1)$,
\[
p_0\ne p_1,\ \ \frac1p=\frac{1-\theta}{p_0}+\frac\theta{p_1},\ \  q_0\ne q_1,\ \  \frac1q=\frac{1-\theta}{q_0}+\frac\theta{q_1},\ \ \text{and}\ \ 0<r\le s\le\infty.
\]
\end{corollary}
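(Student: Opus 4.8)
\textbf{Proof proposal for Corollary \ref{subLpq}.}

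The plan is to verify the hypotheses of Theorem \ref{subint} for the two couples $(L_{p_0,1}(\mu), L_{p_1,1}(\mu))$ and $(L_{q_0,\infty}(\nu), L_{q_1,\infty}(\nu))$, and then to identify the concrete interpolation spaces that produce the stated scale of Lorentz spaces. First I would record, for each $i=0,1$, that the classical Lorentz space $L_{p_i,1}(\mu)$ corresponds to $\Lambda_{1,w}$ with $w(t)=\frac1{p_i}t^{1/p_i-1}$, so $W(t)=t^{1/p_i}$ and, taking $r=1$, $p=p_i$, the function $t\mapsto W(t^{p_i/1})=t$ is (trivially) convex; by the computation in the proof of Theorem \ref{Lorentz} this gives a lower $p_i$-estimate on $L_{p_i,1}(\mu)$ with constant $1$. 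Dually, $L_{q_i,\infty}(\nu)$ is (up to equivalent quasi-norm) the space $\Lambda_{\infty,v}$-type object with $V(t)=t^{1/q_i}$; taking $s=\infty$, $q=q_i$, the function $t\mapsto V(t^{q_i/\infty})$ is constant hence concave, and the same computation (in its $s=\infty$ form, or more simply the direct two-term estimate $\|g_0+g_1\|_{q_i,\infty}\le 2^{1/q_i}(\|g_0\|_{q_i,\infty}^{q_i}+\|g_1\|_{q_i,\infty}^{q_i})^{1/q_i}$ for disjointly supported $g_0,g_1$) shows $L_{q_i,\infty}(\nu)$ satisfies an upper $q_i$-estimate. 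Since $p_i<q_i$, the pair $(p_i,q_i)$ meets the index requirement of Theorem \ref{subint}, and the Fatou property of $L_{q_i,\infty}$ follows from $f_n\uparrow f \Rightarrow f_n^*\uparrow f^*$ together with monotone convergence, exactly as in the proof of Corollary \ref{Gam}.

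With the couple hypotheses in place, Theorem \ref{subint} yields that $T^*$ is bounded from $E$ to $F$ for \emph{any} pair of relative interpolation spaces $E,F$ with respect to these two couples. To get the final concrete assertion, I would invoke the classical real interpolation identities for Lorentz spaces: by \cite[Theorem 5.3.1]{BL} (the Lorentz spaces form a scale under the real method), whenever $p_0\ne p_1$, $\theta\in(0,1)$, and $\frac1p=\frac{1-\theta}{p_0}+\frac\theta{p_1}$, one has $(L_{p_0,1}(\mu),L_{p_1,1}(\mu))_{\theta,r}=L_{p,r}(\mu)$ for every $0<r\le\infty$ (the second index of the endpoints is immaterial since $p_0\ne p_1$); similarly, when $q_0\ne q_1$ and $\frac1q=\frac{1-\theta}{q_0}+\frac\theta{q_1}$, we get $(L_{q_0,\infty}(\nu),L_{q_1,\infty}(\nu))_{\theta,s}=L_{q,s}(\nu)$ for every $0<s\le\infty$. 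Because the real method $(\cdot,\cdot)_{\theta,r}$ on the domain and $(\cdot,\cdot)_{\theta,s}$ on the range (with the \emph{same} $\theta$) is an exact relative interpolation functor, $E=L_{p,r}(\mu)$ and $F=L_{q,s}(\nu)$ are admissible choices, and Theorem \ref{subint} delivers boundedness of $T^*:L_{p,r}(\mu)\to L_{q,s}(\nu)$. The restriction $r\le s$ is not actually needed for this argument — it is presumably stated only to match the hypothesis in Corollary \ref{ChKLorentz} — but if one wishes to stay strictly within that corollary's setting, the extra embedding $L_{q,r}\hookrightarrow L_{q,s}$ handles it.

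I do not expect a serious obstacle here; the corollary is essentially a packaging of Theorem \ref{subint} with standard interpolation facts. The one point requiring a little care is the verification that $L_{q_i,\infty}$ genuinely satisfies an \emph{upper $q_i$-estimate} rather than merely a two-term one — here one should note that for finitely many disjointly supported $g_1,\dots,g_m$ in $L_{q_i,\infty}$, the distribution function of the sum is the sum of the distribution functions, so $\mu_{\sum g_j}(y)=\sum_j\mu_{g_j}(y)$ and $\|\sum g_j\|_{q_i,\infty}^{q_i}=\sup_y y^{q_i}\sum_j\mu_{g_j}(y)\le\sum_j\sup_y y^{q_i}\mu_{g_j}(y)=\sum_j\|g_j\|_{q_i,\infty}^{q_i}$, giving $u^{(q_i)}(L_{q_i,\infty})=1$ outright. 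A second minor point is making sure the interpolation parameter $\theta$ is shared between domain and range so that the relative interpolation hypothesis of Theorem \ref{subint} is literally satisfied; this is exactly what the displayed conditions on $1/p$ and $1/q$ arrange.
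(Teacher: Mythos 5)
Your proof is correct and takes essentially the same route as the paper. The verification of the lower $p_i$-estimate for $L_{p_i,1}(\mu)$ via the convexity computation from the proof of Theorem \ref{Lorentz}, the upper $q_i$-estimate for $L_{q_i,\infty}(\nu)$ with constant one via additivity of distribution functions over disjoint supports, the Fatou property check, and the reduction to Theorem \ref{subint} all match the paper's argument. The one genuine difference is in how the concrete scale of Lorentz spaces is identified as a relative interpolation pair: the paper cites the general Marcinkiewicz interpolation theorem \cite[Theorem 5.3.2]{BL} directly, whereas you decompose it into the real-method identity $(L_{p_0,1},L_{p_1,1})_{\theta,r}=L_{p,r}$ (\cite[Theorem 5.3.1]{BL}) followed by the functorial interpolation property. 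These are equivalent and both standard; neither buys anything over the other here.

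One claim you make in passing should be corrected. You assert that ``the restriction $r\le s$ is not actually needed for this argument,'' suggesting it is a stylistic carry-over. That is not so. Applying the real method with second parameter $r$ to the domain couple and with second parameter $s$ to the range couple produces a relative interpolation pair precisely by first interpolating with the single functor $(\cdot,\cdot)_{\theta,r}$ on both sides and then composing with the embedding $(F_0,F_1)_{\theta,r}\hookrightarrow(F_0,F_1)_{\theta,s}$, which holds when $r\le s$. When $r>s$ the embedding points the other way, and the claimed relative interpolation property fails in general. Your own sentence in fact goes on to invoke exactly that embedding, so the written proof is sound; only the accompanying comment is off.
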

\begin{proof} Let $q\in[1,\infty)$. Since $0\le f_n\uparrow f$ implies $\nu_{f_n}\uparrow \nu_f$, it is easy to see that $L_{q, \infty}(\nu)$ has the Fatou property. If $f_1,\dots, f_n$ have disjoint supports and sum to $f$, then $\nu_f=\sum_{j=1}^n\nu_{f_j}$ so
\[
\sup_{\lambda>0}\lambda\nu_f(\lambda)^{1/q}=\sup_{\lambda>0}\Big(\lambda^{q}\sum_{j=1}^nv_{f_j}(\lambda)\Big)^{1/q}\le\Big(\sum_{j=1}^n\sup_{\lambda>0}\lambda^{q}v_{f_j}(\lambda)\Big)^{1/q}.
\]
This shows $L_{q,\infty}(\nu)$ satisfies an upper $q$ estimate with constant $1$. 

In the proof of Theorem \ref{Lorentz}, we showed that if $r\le p$ and $W(t^{p/r})$ is convex, then $\Lambda_{r,w}$ has a lower $p$-estimate. Taking $r=1$ and $w(t)=\frac1pt^{\frac1p-1}$ we see that $W(t^{p/r})=t$ so $L_{p,1}(\mu)$ has a lower $p$-estimate when $1\le p$. 

Applying these observations to $q=q_i$ and $p=p_i$, for $i=0,1$, we verify the hypotheses of Theorem \ref{subint} and conclude that $T^*$ is bounded from $E$ to $F$. 

The second statement follows from the general Marcinkiewicz interpolation theorem (see \cite[Theorem 5.3.2]{BL}). It shows that $L_{p,r}(\mu)$ and $L_{q, s}(\nu)$ are relative interpolation spaces with respect to the couples $(L_{p_0,1}(\mu), L_{p_1, 1}(\mu))$ and $(L_{q_0, \infty}(\nu), L_{q_1, \infty}(\nu) )$.
\end{proof}

\subsection{Application: A convolution operator of Bak and Seeger}

Our next application is motivated by Bak and Seeger's paper \cite{BS}, in which the authors prove a remarkable endpoint version of the Stein-Thomas restriction theorem: Suppose that $0<a<n$, $0<b\leq a/2$, and $\mu$ is a Borel probability measure on $\mathbb R^n$ such that
\[
\sup_{r \in (0, 1],\, x\in \mathbb R^n} r^{-a}\mu(B(x, r))<\infty\quad\text{and}\quad\sup_{|\xi|\geq 1} |\xi|^b |\widehat{\mu}(\xi)|<\infty,
\]
where $B(x, r)$ is the Euclidean ball with center $x$ and radius $r$. Then the Fourier transform is bounded from the Lorentz space $L_{p_0, 2}$ over $\mathbb{R}^n$ to $L_2(\mu)$ with $p_0  = \frac{2(n-a +b)}{2(n-a) + b}$.

The proof of their result is a consequence of a Lorentz norm inequality for a certain convolution operator, see \cite[Proposition 2.1]{BS}. We show that the inequality also holds for a maximal operator based on the convolution operator and associated to a filtration.
\begin{proposition}\label{BaSe} With $a$, $b$, $n$ and $\mu$ as above, let 
\[
\rho = \frac{(n-a + 2b)(n-a + b)}{(n -a)^2 + 3b(n-a) + b^2} \quad\text{and}\quad\sigma = \frac{n-a + 2b}{b}.
\]
If $p\in (\rho, \sigma')$ and $\frac1p-\frac1q = \frac{n-a}{n-a + b}$, then for any $s\in (0, \infty]$
the maximal operator $T^{*}$, based on the convolution operator $Tf  = f\ast \widehat{\mu}$ and associated to a filtration of $\mathbb R^n$, is bounded from $L_{p,s}$  to $L_{q, s}$. 
\end{proposition}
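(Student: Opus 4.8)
The plan is to deduce the estimate from Corollary~\ref{subLpq} by interpolating two linear endpoint bounds for the convolution operator $T$ itself, which are furnished by \cite[Proposition~2.1]{BS}.

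First I would dispose of the index bookkeeping. Put $g=\frac{n-a}{n-a+b}$, and for $\tilde p\in[1,\infty)$ with $\tilde p<1/g$ let $\tilde q$ be determined by $\frac1{\tilde p}-\frac1{\tilde q}=g$. A direct computation shows that the numerator of $\rho$ exceeds its denominator by exactly $b^2$, so $\rho>1$; that $\sigma'=\frac{n-a+2b}{n-a+b}$ satisfies $\sigma'<1/g$; and that $\rho<\sigma'$. Hence $(\rho,\sigma')$ is a nonempty subinterval of $(1,1/g)$, and every $\tilde p\in(\rho,\sigma')$ satisfies $1<\tilde p<\tilde q<\infty$. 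Moreover, since the pairs $(1/p,1/q)$ with $\frac1p-\frac1q=g$ lie on a line of slope $1$, if $\frac1{p_i}-\frac1{q_i}=g$ for $i=0,1$ with $p_0\ne p_1$, and if $\theta\in(0,1)$ and $\frac1p=\frac{1-\theta}{p_0}+\frac\theta{p_1}$, then automatically $\frac1q=\frac{1-\theta}{q_0}+\frac\theta{q_1}$ and $q_0\ne q_1$.

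Next I would record the analytic input: by \cite[Proposition~2.1]{BS}, for each $\tilde p\in(\rho,\sigma')$ the operator $Tf=f*\widehat\mu$ is bounded from $L_{\tilde p,1}$ to $L_{\tilde q,\infty}$, using if necessary the embeddings $L_{\tilde p,1}\hookrightarrow L^{\tilde p}$ and $L^{\tilde q}\hookrightarrow L_{\tilde q,\infty}$ to pass from the form in which the cited result is stated. Now, given $p$ and $q$ as in the statement, choose $p_0,p_1\in(\rho,\sigma')$ with $p_0<p<p_1$ and let $q_i$ be determined by $\frac1{q_i}=\frac1{p_i}-g$; then $1\le p_i<q_i<\infty$, $p_0\ne p_1$, $q_0\ne q_1$, and $T$ is bounded from $L_{p_i,1}$ to $L_{q_i,\infty}$ for $i=0,1$. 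Taking $\theta\in(0,1)$ with $\frac1p=\frac{1-\theta}{p_0}+\frac\theta{p_1}$, the bookkeeping above gives $\frac1q=\frac{1-\theta}{q_0}+\frac\theta{q_1}$, so the last assertion of Corollary~\ref{subLpq}, applied with $r=s$ (which trivially obeys $0<r\le s\le\infty$), yields that $T^*$ is bounded from $L_{p,s}$ to $L_{q,s}$ for every $s\in(0,\infty]$.

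The only point I expect to demand care is matching the interval $(\rho,\sigma')$ in the statement with the precise range of exponents for which \cite[Proposition~2.1]{BS} delivers a bound of the type $L_{\tilde p,1}\to L_{\tilde q,\infty}$; openness of $(\rho,\sigma')$ is exactly what permits sandwiching the given $p$ strictly between two admissible exponents $p_0,p_1$, so no endpoint (restricted-weak-type) refinement of the Bak--Seeger estimate is needed. All remaining steps --- the elementary index identities and a single application of the interpolation theorem of the preceding section --- are routine.
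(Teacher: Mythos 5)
Your proof is essentially correct and uses the same core ingredients (Corollary~\ref{subLpq} and Bak--Seeger's Proposition~2.1), but takes a mild detour compared to the paper's argument. The paper applies Corollary~\ref{subLpq} directly with the \emph{endpoint} exponents $p_0=\rho$, $q_0=\sigma$, $p_1=\sigma'$, $q_1=\rho'$, since Bak--Seeger's Proposition~2.1 already provides the restricted weak-type bounds $T\colon L_{\rho,1}\to L_{\sigma,\infty}$ and $T\colon L_{\sigma',1}\to L_{\rho',\infty}$ in exactly the form Corollary~\ref{subLpq} requires. You instead sandwich $p$ strictly between two interior exponents $p_0<p<p_1$ in $(\rho,\sigma')$ and invoke restricted weak-type bounds there. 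Your index bookkeeping ($\rho>1$ via the $b^2$ gap, $\sigma'<(n-a+b)/(n-a)$, the collinearity of the $(1/p,1/q)$ pairs, the reduction of $r\le s$ to $r=s$) is all correct.

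The one soft spot is the sentence ``using if necessary the embeddings $L_{\tilde p,1}\hookrightarrow L^{\tilde p}$ and $L^{\tilde q}\hookrightarrow L_{\tilde q,\infty}$ to pass from the form in which the cited result is stated.'' If Bak--Seeger's Proposition~2.1 is stated as the two endpoint restricted weak-type bounds (as the paper's proof indicates), those embeddings alone do not produce the interior bound $T\colon L_{\tilde p,1}\to L_{\tilde q,\infty}$; you need a Marcinkiewicz interpolation step first to get strong-type bounds at interior $\tilde p$, then embed. So your concluding remark that ``no endpoint (restricted weak-type) refinement of the Bak--Seeger estimate is needed'' is not quite right: those endpoint estimates are precisely what Bak--Seeger supply, and they are what makes the interior estimates available. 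This is a fixable oversight rather than a genuine obstruction, but it explains why the paper's route --- using the endpoint estimates directly, with no sandwiching --- is the more economical one.
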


\begin{proof} From \cite[Proposition 2.1]{BS}, $T$ is bounded from  $L_{\rho, 1}$ to $L_{\sigma, \infty}$ and from $L_{\sigma', 1}$ to $L_{\rho', \infty}$.

Now observe that $1<\rho <\sigma<\infty$ and so $1<\sigma' < \rho' <\infty$. A calculation shows that $\rho <\sigma'$, so for any $p\in (\rho, \sigma')$ we can find $\theta \in (0, 1)$ such that $1/p = (1-\theta)/\rho + \theta/\sigma'$.
Since 
\[
\frac1\rho - \frac1\sigma = \frac{n-a}{n-a + b},
\]
it follows that
\[
\frac1q = \frac1p - \frac{n-a}{n-a + b} = \frac{1-\theta}{\sigma} + \frac{\theta}{\rho'}.
\]
Consequently, the result follows from Corollary \ref{subLpq}.
\end{proof}


\end{document}